\numberwithin{theorem}{section}
\newcommand{\TheTitle}{Spectral properties of kernel matrices \\ in the flat limit}
\newcommand{\TheShortTitle}{Spectral properties of kernel matrices in the flat limit}
\newcommand{\TheAuthors}{S.Barthelm\'{e} and K. Usevich}
\headers{\TheShortTitle}{\TheAuthors}
\title{{\TheTitle}\thanks{Submitted to the editors on DATE.
    \funding{This work was supported by ANR project GenGP (ANR-16-CE23-0008) and ANR project LeaFleT (ANR-19-CE23-0021-01).}}}
\author{
  Simon Barthelm\'{e}\thanks{CNRS, Univ. Grenoble Alpes,  Grenoble INP, GIPSA-lab, 38000 Grenoble, France (\email{simon.barthelme@gipsa-lab.fr}).}
   \and
  Konstantin Usevich\thanks{Universit\'{e} de Lorraine and CNRS, CRAN (Centre de Recherche en Automatique en Nancy), UMR 7039, Campus Sciences, BP 70239, 54506 Vand\oe{}uvre-l\`{e}s-Nancy cedex, France (\email{konstantin.usevich@univ-lorraine.fr}).}%
}
\newtheorem{remark}[theorem]{Remark}
\newtheorem{conjecture}{Conjecture}
\newtheorem{example}{Example}
\newcommand\numberthis{\addtocounter{equation}{1}\tag{\theequation}}
\newcommand{\ultriangle}{\blacktriangleright}
\newcommand{\CC}{\mathbb{C}}
\newcommand{\RR}{\mathbb{R}}
\newcommand{\HH}{\mathbb{H}}
\newcommand{\PP}{\mathbb{P}}
\newcommand{\ZZp}{\mathbb{Z}_{+}}
\newcommand{\norm}[1]{\ensuremath{\left\| #1\right\|}}
\newcommand{\calJ}{\mathcal{J}}
\newcommand{\calC}{\mathcal{C}}
\newcommand{\calB}{\mathcal{B}}
\newcommand{\calA}{\mathcal{A}}
\newcommand{\calY}{\mathcal{Y}}
\newcommand{\vect}[1]{\boldsymbol{#1}}
\newcommand{\matr}[1]{\boldsymbol{#1}}
\newcommand{\eqdef}{\stackrel{\textrm{def}}{=}}
\newcommand{\T}{{\sf T}}        % transposition
\newcommand{\stair}[1]{\mathscr{S}_{#1}}
\newcommand{\Dscale}[2]{\matr{\Delta}_{#1}(#2)}
\newcommand{\va}{\vect{\alpha}}
\newcommand{\vb}{\vect{\beta}}
\newcommand{\bA}{\matr{A}}
\newcommand{\bI}{\matr{I}}
\newcommand{\bK}{\matr{K}}       
\newcommand{\bD}{\matr{D}}
\newcommand{\X}{\mathcal{X}}       
\renewcommand{\O}{\mathcal{O}}       
\DeclareMathOperator{\rank}{rank}
\DeclareMathOperator{\mspan}{span}
\DeclareMathOperator{\diag}{diag}
\DeclareMathOperator{\Tr}{Tr}
\DeclareMathOperator{\blkdiag}{blkdiag}
\newcommand{\Qfull}{\matr{Q}_{\rm full}}    
\newcommand{\Rfull}{\matr{R}_{\rm full}}  
\newcommand{\Qthin}{\matr{Q}}    
\newcommand{\Rthin}{\matr{R}}    
\newcommand{\Qort}{\matr{Q}_{\bot}}    
\newcommand{\QortSub}[1]{\matr{Q}_{\bot,#1}}    
\newcommand{\Wronsk}[1]{\matr{W}_{\le #1 }}
\newcommand{\tikzoverset}[2]{%
  \tikz[baseline=(X.base),inner sep=0pt,outer sep=0pt]{%
    \node[inner sep=0pt,outer sep=0pt] (X) {$#2$}; 
    \node[yshift=3pt] at (X.north) {$#1$};
}}
\newcommand{\arrowAn}[1]{% to be used in math mode...
   \hbox{\;\!\!\scalebox{0.6}{\normalfont  {#1}\!\!\!\, {\Large\raisebox{0.6\height}{\Huge \texttildelow}}\,}}}
\newcommand\tildeN[2]{\tikzoverset{\arrowAn{#1}}{#2}}
\newcommand{\coefAtMonomial}[2]{[#2]\left\{#1\right\}}
\definecolor{darkgreen}{rgb}{0,0.6,0}
\Crefname{figure}{Fig.}{Figs.}
\begin{document}

\maketitle
\begin{abstract}
Kernel matrices are of central importance to many applied fields. 
In this manuscript, we focus on spectral properties of kernel matrices in the so-called ``flat limit'', which occurs when points are close together relative to the scale of the kernel.
We establish asymptotic expressions for the determinants of the kernel matrices, which we then leverage to obtain asymptotic expressions for the main terms of the eigenvalues.
Analyticity of the eigenprojectors  yields expressions for limiting eigenvectors, which are strongly tied to discrete orthogonal polynomials.
Both smooth and finitely smooth kernels are covered, with stronger results available in the finite smoothness case.
\end{abstract}

\begin{keywords}
kernel matrices, eigenvalues, eigenvectors, radial basis functions, perturbation theory, flat limit, discrete orthogonal polynomials
\end{keywords}

% REQUIRED
\begin{AMS}
15A18, 47A55, 47A75, 47B34, 60G15, 65D05
\end{AMS}

\vspace{4em}

\section{Introduction}
For an ordered set of points $\X = (\vect{x}_1, \ldots, \vect{x}_n)$,  $\vect{x}_k \in \Omega \subset \RR^{d}$, not lying in general on a regular grid, and a kernel $K: \Omega  \times \Omega \to \RR$, the  kernel matrix $\matr{K}$ is defined as
\[
\matr{K}  = \matr{K}(\X) = \left[ K(\vect{x}_i,\vect{x}_j)\right]_{i,j=1}^{n,n}.
\]
These matrices occur in approximation theory (kernel-based approximation and
interpolation, \cite{schaback2006kernel,wendland2004book}), statistics and machine learning
(Gaussian process models \cite{williams2006gaussian}, Support Vector Machines
and kernel PCA \cite{scholkopf2002learning}).
Often, a  positive  scaling parameter is introduced\footnote{In this paper, for simplicity, we consider only the case of isotropic scaling (i.e., all the variables are scaled with the same parameter $\varepsilon$).
The results should hold for the constant anisotropic case, by rescaling the set of points $\X$ in advance.}, and the scaled kernel matrix becomes
\begin{equation}\label{eq:scaled_kernel}
\matr{K}_{\varepsilon} = \matr{K}_{\varepsilon} (\X) =  \left[ K_{\varepsilon}(\vect{x}_i,\vect{x}_j) \right]_{i,j=1}^{n,n}, 
\end{equation}
where typically $K_{\varepsilon}(\vect{x},\vect{y}) = K(\varepsilon\vect{x},\varepsilon\vect{y})$.
If the kernel is radial (the most common case), then its value depends only on the Euclidean distance between   $\vect{x}$ and $\vect{y}$,  and $\varepsilon$ determines how quickly the kernel decays with  distance. 

Understanding  spectral properties of  kernel matrices is essential in statistical applications (e.g., for  selecting hyperparameters), as well as in scientific computing (e.g., for preconditioning \cite{fornberg2011stable,wathen1987preconditioning}). Because the spectral properties of kernel matrices are not directly tractable in the general case, one usually needs to resort to asymptotic results.
The most common form of asymptotic analysis takes $n \rightarrow \infty$.
Three cases are typically considered: (a) when the distribution of points in $\X$
converges to some continuous measure on $\Omega$, the kernel matrix tends in some sense to a
linear operator in a Hilbert space, whose spectrum is then studied
\cite{von2008consistency}; (b) recently, some authors have obtained asymptotic results in a regime where both $n$
and the dimension $d$ tend to infinity
\cite{elkaroui2010spectrum,couillet2016kernel,wang2018numerical}, using the tools of random matrix theory; 
(c) in a special case of $\X$ lying on a regular grid,  stationary kernel
matrices become  Toeplitz or more generally  multilevel  Toeplitz,  whose
asymptotic spectral distribution is determined by their symbol (or the Fourier
transform of the sequence)
\cite{grenander1984toeplitz,baxter1994norm,tyrtyshnikov1996toeplitz,miranda2000blocktoeplitz,baxter2002preconditioned}. 

Driscoll \& Fornberg \cite{driscoll2002interpolation} pioneered a new form of
asymptotic analysis for kernel methods, in the context of Radial Basis Function (RBF) interpolation. The point set $\X$ is considered fixed,
with arbitrary geometry  (i.e., not lying in general on a regular grid),  and the scaling parameter $\varepsilon$  approaches  $0$.
Driscoll \& Fornberg called this the ``flat limit'', as kernel functions become
flat over the range of $\X$ as $\varepsilon \rightarrow 0$. Very surprisingly, they
showed that for certain kernels the RBF interpolant stays well-defined in the
flat limit, and tends to the Lagrange polynomial  interpolant. 
Later, a series of papers extended their results to the multivariate case \cite{schaback2005multivariate,larsson2005theoretical,schaback2008limit}
and established similar convergence results for various types of radial
functions (see \cite{song2012multivariate,lee2015flatkernel} and references therein).
In particular, \cite{song2012multivariate} showed that for kernels of finite smoothness the limiting interpolant is a spline rather than a polynomial. 

The flat limit is interesting for several reasons. In contrast to other asymptotic analyses, it is deterministic ($\X$ is fixed), and makes very few assumptions on the geometry of the point set. In addition, kernel methods are plagued by the problem of picking a scale parameter \cite{scholkopf2002learning}. One either uses burdensome procedures like cross-validation or maximum likelihood \cite{williams2006gaussian} or sub-optimal but cheap heuristics like the median distance heuristic \cite{garreau2017large}. The flat limit analysis may shed some light on the problem. Finally, the results derived here can be thought of as perturbation results, in the sense that they are formally exact in the limit, but useful approximations when the scale is not too small. 

Despite its importance, little was known until recently about the eigenstructure of kernel matrices in the flat limit.
The difficulty comes from the fact that $\matr{K}_{\varepsilon} = K(0,0) \vect{1}\vect{1}^{\T}  + \mathcal{O}(\varepsilon)$, where $\vect{1} = \left[\begin{smallmatrix} 1 & \cdots & 1\end{smallmatrix}\right]^{\T}$, i.e.,  we are dealing with a singular perturbation problem\footnote{ Seen from
the point of view of the characteristic polynomial, the equation $\det(\matr{K}_\varepsilon - \lambda \matr{I}) = 0$
has a solution of multiplicity $n-1$ at $\varepsilon=0$, but these roots immediately
separate when $\varepsilon>0$.}.

Schaback \cite[Theorem~6]{schaback2005multivariate}, and, more explicitly,
  Wathen \& Zhu \cite{wathen2015eigenvalues} obtained results on the orders of
eigenvalues of kernel matrices for smooth analytic radial basis kernels, based
on the Courant-Fischer minimax principle. A heuristic analysis of the behaviour
of the eigenvalues in the flat limit was also performed in
\cite{fornberg2007runge}.  However, the main terms in the expansion of the
eigenvalues have never been obtained, and the results in
\cite{schaback2005multivariate, wathen2015eigenvalues} apply only to smooth
kernels. In addition, they hold no direct information on the limiting
eigenvectors.

In this paper, we try filling this gap by characterising both the
eigenvalues and eigenvectors of kernel matrices in the flat limit. 
We consider both completely smooth kernels and finitely-smooth kernels. The latter
(Mat\'ern-type kernels) are very popular in spatial statistics.
For establishing asymptotic properties of eigenvalues, we use the expression for the limiting determinants of $\matr{K}_{\varepsilon}$ (obtained only for the smooth case), and Binet-Cauchy formulae.
As a special case, we recover the results of Schaback
\cite{schaback2005multivariate} and Wathen \& Zhu \cite{wathen2015eigenvalues}, but for a wider class of kernels.

\subsection{Overview of the results}
\label{sec:overview}
Some of the results are quite technical, so the goal of this section is to serve as a reader-friendly summary of the contents. 

\subsubsection{Types of kernels}
\label{sec:types-of-kernels}

We begin with some definitions. 

A kernel is called \emph{translation invariant} if
\begin{equation}\label{eq:kernel_trans_general}
K(\vect{x},\vect{y}) = \varphi(\vect{x}-\vect{y})
\end{equation}
 for some function $\varphi$. A kernel is \emph{radial}, or \emph{stationary} if, in addition, we have:
 \begin{equation}\label{eq:kernel_rbf_general}
 K(\vect{x},\vect{y})  = f(\norm{\vect{x}-\vect{y}}_2)
 \end{equation}
 i.e., $\varphi(t) = f(|t|)$ and the value of the kernel depends only on the Euclidean distance between $\vect{x}$ and $\vect{y}$. The function $f$ is an \emph{RBF}. 
 Finally, $K(\vect{x},\vect{y})$ may be positive (semi)  definite, in which case
the kernel matrix is positive (semi) definite \cite{demarchi2009nonstandard} for all sets of distinct points $\X = (\vect{x}_1, \ldots, \vect{x}_n)$, and all $n>0$. 

 All of our results are valid for stationary, positive semi-definite kernels. In addition, some are also valid for translation-invariant kernels, or even general, non-radial kernels. For simplicity, we focus on radial kernels in this introductory section. 

 An important property of a radial kernel is its order of smoothness, which we call $r$ throughout this paper. The definition is at first glance not very enlightening: formally, if the first $p$ odd-order derivatives of the RBF $g$ are zero, and the $(p+1)$-th is nonzero,  then $r=p+1$. To understand the definition, some Fourier analysis is required \cite{stein1999interpolation}, but for the purposes of this article we will just note two consequences. When interpolating using a radial kernel of smoothness $r$, the resulting interpolant is $r-1$ times differentiable. When sampling from a Gaussian process with covariance function of smoothness order $r$, the sampled process is also $r-1$ times differentiable (almost surely). $r$ may equal $\infty$, which is the case we call \emph{infinitely smooth}. If $r$ is finite we talk about a \emph{finitely smooth} kernel. We treat the two cases separately due to importance  of infinitely smooth kernels, and because proofs are simpler in that case. 

 Finally, the points are assumed to lie in some subset of $\RR^d$, and if $d=1$ we call this the \emph{univariate} case, as opposed to the \emph{multivariate} case ($d>1$).

\subsubsection{Univariate results}
\label{sec:univariate-summary}

 In the univariate case, we can give simple closed-form expressions for the eigenvalues and
 eigenvectors of kernel matrices as $\varepsilon \rightarrow 0$. What form these expressions take depends essentially on  the order of smoothness. 
 
We shall contrast two kernels that are at opposite ends of the smoothness spectrum. 
One,
the Gaussian kernel, is infinitely smooth, and is defined as:
\[ K_{\varepsilon}(x,y) = \exp \left( - \varepsilon (x-y)^2 \right).\]
The other has smoothness order 1, and is known as the ``exponential'' kernel
(and is also a Mat\'ern kernel):
\[ K_{\varepsilon}(x,y) = \exp \left( -\varepsilon |x-y| \right).\]
Both kernels are radial and positive definite. However, the small-$\varepsilon$ asymptotics of these two kernels are strikingly different.

In the case of the Gaussian kernel, the eigenvalues go to $0$ extremely
fast, except for the first one, which goes to $n$. Specifically, the first
eigenvalue is $\O(1)$, the second is $\O(\varepsilon^2)$, the third is
$\O(\varepsilon^4)$, etc\footnote{ In fact,  \Cref{thm:1d_smooth_ev} guarantees  the same behaviour for general kernels of sufficient smoothness. }. 
\Cref{fig:eval-gaussian-kernel-1d} shows the
eigenvalues of the Gaussian kernel for a fixed set $\X$ of
randomly-chosen nodes in the unit interval ($n=10$ here). The eigenvalues are
shown as a function of $\varepsilon$, under log-log scaling. As expected from
\Cref{thm:1d_smooth_ev} (see also \cite{schaback2005multivariate,wathen2015eigenvalues}), for each $i$, $\log \lambda_i$ is approximately linear as a
function of $\log \varepsilon$. In addition, the main term in the scaling of $\log
\lambda_i$ in $\varepsilon$ (i.e., the offsets of the various lines) is also given by 
\Cref{thm:1d_smooth_ev}, and the corresponding asymptotic approximations
are plotted in red. They show very good agreement with the exact eigenvalues, up
to $\varepsilon \approx 1$.

\begin{figure}[t!]
  \centering
  \includegraphics[width=7cm]{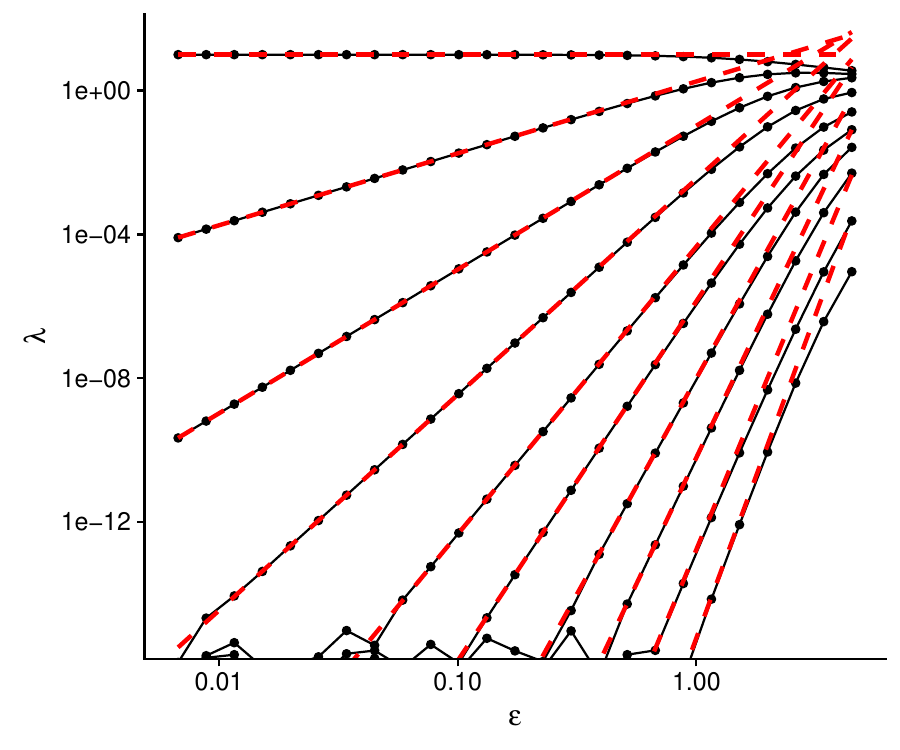}
  \caption{Eigenvalues of the Gaussian kernel ($d=1$). The set of $n=10$
    nodes was drawn uniformly from the unit interval. In black, eigenvalues of the Gaussian kernel, for different values of $\varepsilon$. The dashed red
    curves are our small-$\varepsilon$ expansions. Note that both axes are scaled
    logarithmically. 
    The noise apparent for small
    $\varepsilon$ values  in the low range is due to loss of precision in the
    numerical computations.}
  \label{fig:eval-gaussian-kernel-1d}
\end{figure}

Contrast that behaviour with the one exhibited by the eigenvalues of the
exponential kernel. \Cref{thm:1d_finite_smoothness_ev} describes the
expected behaviour: the top eigenvalue is again $\O(1)$ and goes to $n$, while
all remaining eigenvalues are $\O(\varepsilon)$. \Cref{fig:eval-exp-kernel-1d} is the counterpart of
the previous figure, and shows clearly that all eigenvalues except for the top
one go to 0 at unit rate. 
The  main term in the expansions of eigenvalues  determines again the
 offsets shown in  \Cref{fig:eval-exp-kernel-1d},  which   can be computed
from the eigenvalues of the centered distance matrix  as shown in \Cref{thm:1d_finite_smoothness_ev}.  

\begin{figure}[t!]
  \centering
  \includegraphics[width=7cm]{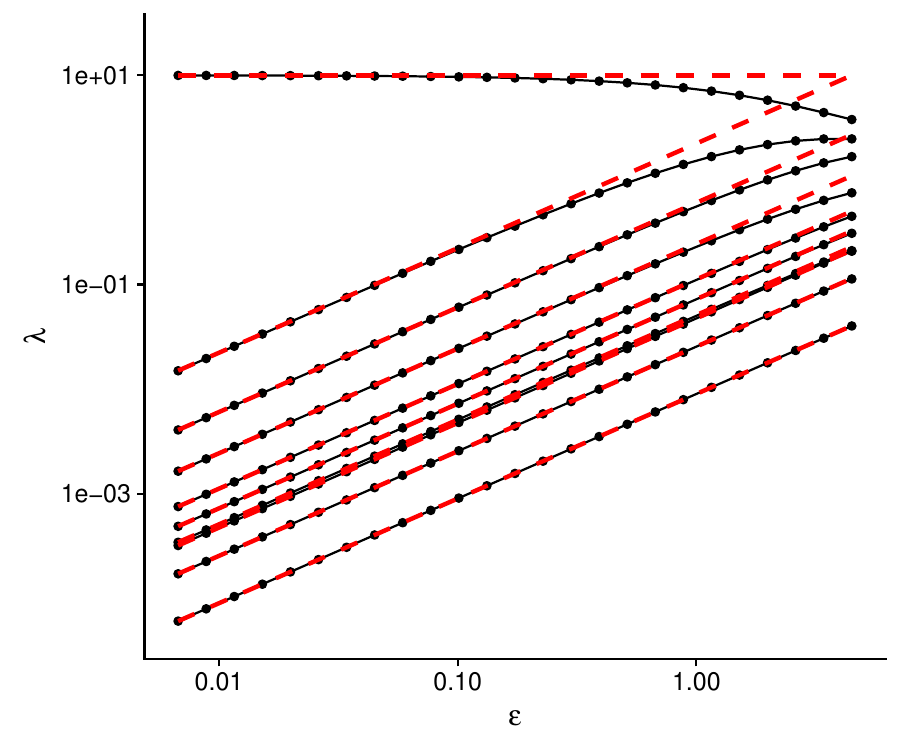}
  \caption{Eigenvalues of the exponential kernel ($d=1$), using the same set of
    points as in \Cref{fig:eval-gaussian-kernel-1d}. The largest eigenvalue
    has a slope of 0 for small $\varepsilon$, the others have unit slope, as
    in \Cref{thm:det_1d_finite_smoothness}.    }
  \label{fig:eval-exp-kernel-1d}
\end{figure}

\newpage

To sum up: except for the top eigenvalue, which behaves in the same way for both
kernels, the rest scale quite differently. More generally, 
\Cref{thm:1d_finite_smoothness_ev} states that for kernels of smoothness order
$r < n$ ($r=1$ for the exponential, $r=\infty$ for the Gaussian), the eigenvalues
are divided into two groups. The first group of eigenvalues is of size $r$, and have orders
$1$, $\varepsilon^2$, $\varepsilon^4$, etc. The second group is of size $n-r$, and all
have the same order, $\varepsilon^{2r-1}$. 

The difference between the two kernels is also reflected in how the eigenvectors
behave. For the Gaussian kernel, the limiting eigenvectors (shown in 
\Cref{fig:evec-gaussian-kernel-1d}) are columns of the $\matr{Q}$ matrix of the QR factorization of the Vandermonde matrix (i.e., the orthogonal polynomials with respect to the discrete uniform measure on $\X$). For instance, the top eigenvector equals the constant vector $\frac{1}{\sqrt{n}}\mathbf{1}$, and the second eigenvector equals $\begin{bmatrix}x_1 & \cdots & x_n \end{bmatrix}^{\T} - \frac{\sum x_i}{n}$ (up to normalisation). Each successive eigenvector depends on the geometry of $\X$ via the moments $m_p(\X) = \sum_{i=1}^n x^p_i$. 
In fact, this result is valid for any positive definite smooth analytic in $\varepsilon$ kernel as shown by \Cref{cor:1d_smooth_ev_ratio}.

\begin{figure}[t!]
  \centering
  \includegraphics[width=10cm]{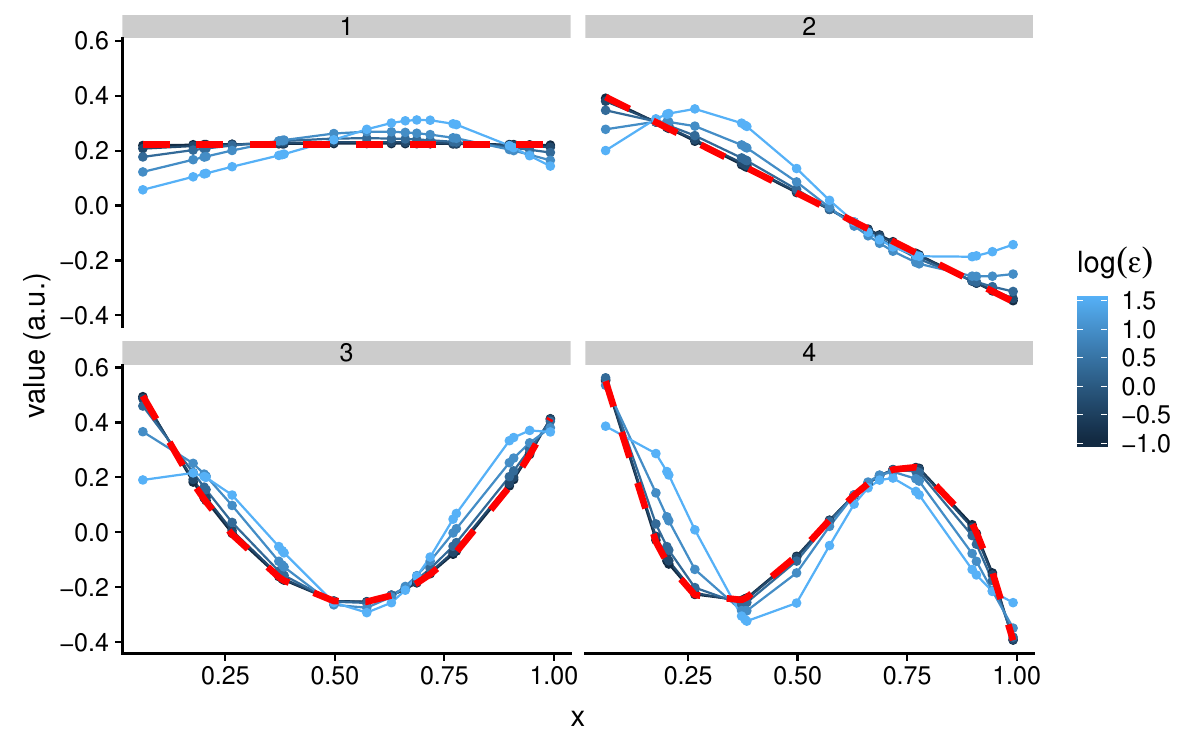}
  \caption{First four eigenvectors of the Gaussian kernel. We used the same set of points as in the previous two figures. In blue, eigenvectors of
    the Gaussian kernel, for different values of $\varepsilon$. The dashed red
    curve shows the theoretical limit as $\varepsilon \rightarrow 0$ (i.e., the first four orthogonal polynomials of the discrete measure) }
  \label{fig:evec-gaussian-kernel-1d}
\end{figure}

In the case of finite smoothness, the  two groups are associated with different groups of eigenvectors. 
The first group of $r$ eigenvectors are again orthogonal polynomials. 
The second group are  splines of order $2r-1$. 
Convergence  of eigenvectors is shown in  \Cref{fig:evec-exp-kernel-1d}.
This general result for eigenvectors is  shown in \Cref{thm:finite_smoothness_eigenvectors}.

\begin{figure}[t!]
  \centering
  \includegraphics[width=10cm]{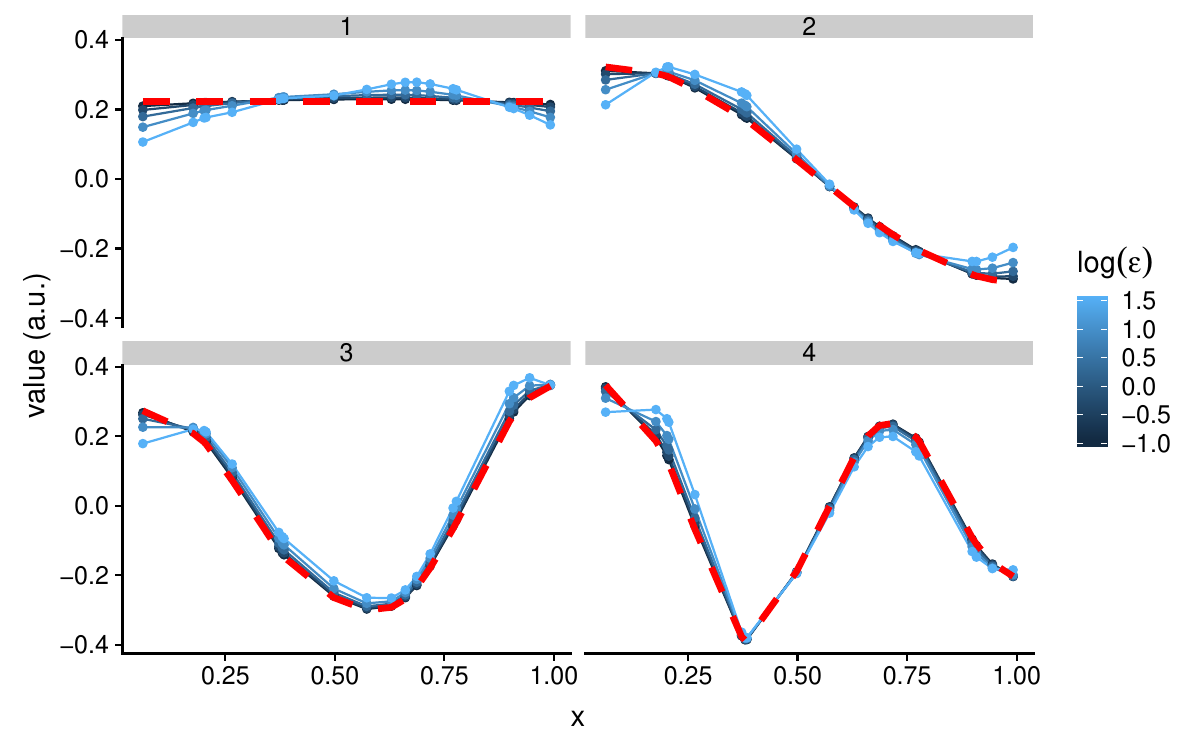}
  \caption{First four eigenvectors of the exponential kernel. The same set of 
    nodes is used as in \Cref{fig:evec-gaussian-kernel-1d}. In blue, the
    eigenvectors of the kernel, for different values of $\varepsilon$. The dashed red
    curve shows the theoretical limit as $\varepsilon \rightarrow 0$. From \Cref{thm:finite_smoothness_eigenvectors},
  these are (1) the vector $\frac{\mathbf{1}}{\sqrt{n}}$ and (2-4) the first three eigenvectors
of $\left(\bI-\frac{\mathbf{1}\mathbf{1}^{\T}}{n}\right)\bD_{(1)}\left(\bI-\frac{\mathbf{1}\mathbf{1}^{\T}}{n}\right)$, where $\bD_{(1)}$ is the Euclidean distance matrix.}
  \label{fig:evec-exp-kernel-1d}
\end{figure}

\begin{figure}[ht!]
  \centering
  \includegraphics[width=7cm]{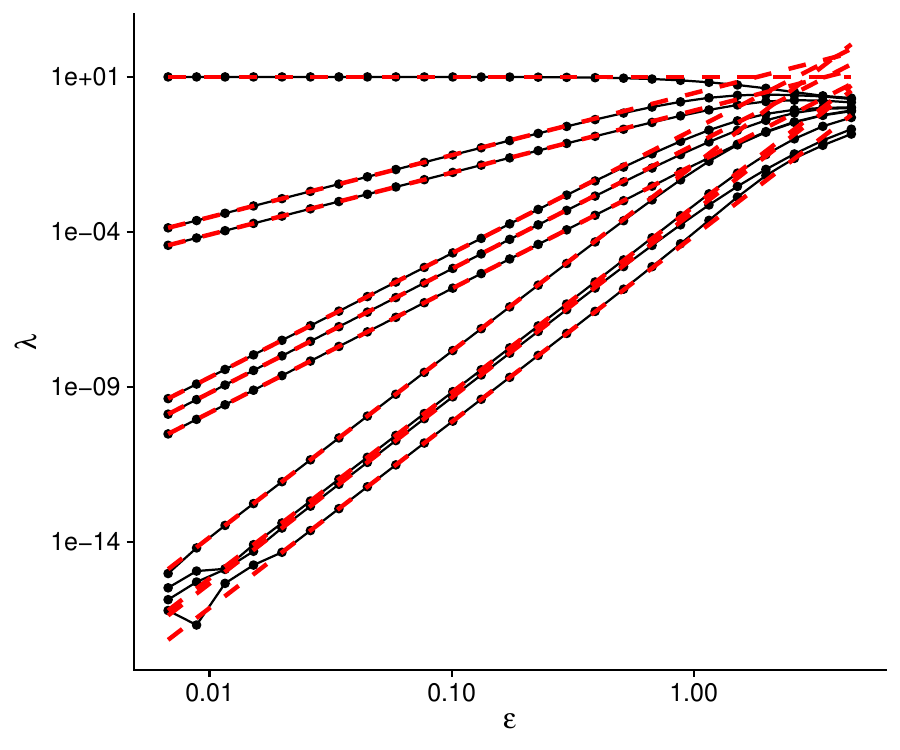}
  \caption{Eigenvalues of the Gaussian kernel in the multivariate case ($d=2$).
    The set of $n=10$    nodes was drawn uniformly from the unit square. In black, eigenvalues of the Gaussian kernel, for different values of $\varepsilon$. The dashed red
    curves are our small-$\varepsilon$ expansions. Eigenvalues appear in groups of
    different orders, recognizable here as having the same slopes as $\varepsilon
    \rightarrow 0$. A single eigenvalue is of order $\O(1)$. Two eigenvalues are
  of order $\O(\varepsilon^2)$, as many as there are monomials of degree 1 in two
  dimensions. Three eigenvalues are
  of order $\O(\varepsilon^4)$, as many as there are monomials of degree 2 in two
  dimensions, etc. }
  \label{fig:eval-gaussian-kernel-2d}
\end{figure}

\begin{figure}[ht!]
  \centering
  \includegraphics[width=7cm]{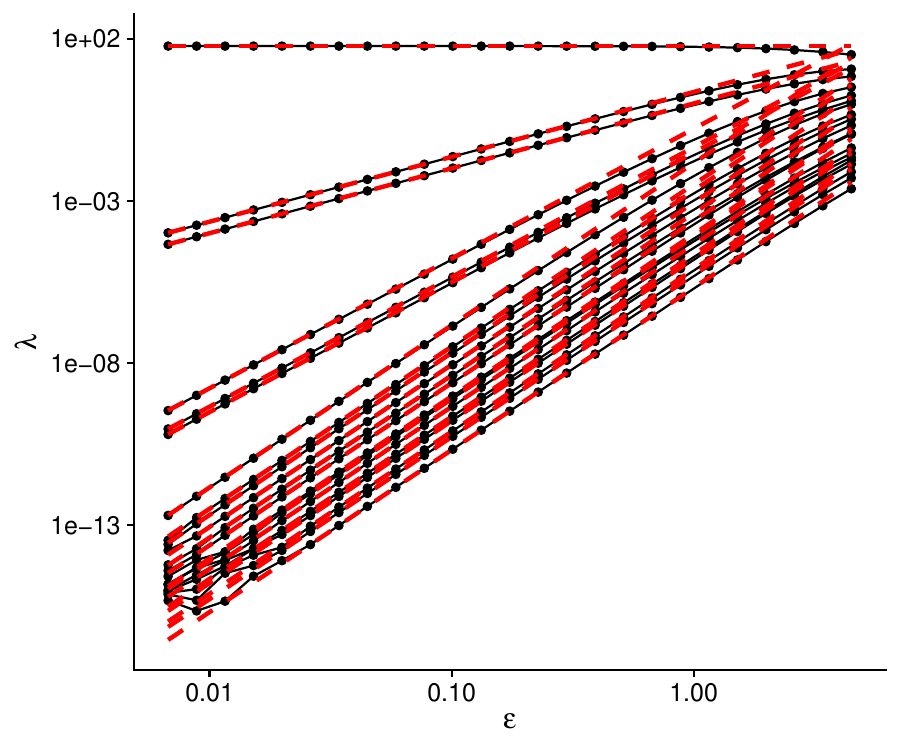}
  \caption{Eigenvalues of a finitely smooth kernel in $d=2$. Here we picked a
    kernel with smoothness index $r=3$ (the exponential kernel has order $r=1$). 
    The set of $n=20$ nodes was drawn uniformly from the unit square. 
    The first $r=3$ groups of eigenvalues have the same behaviour as in the
    Gaussian kernel: one is $\O(1)$, two are $\O(\varepsilon^2)$, three
    are $\O(\varepsilon^4)$. All the rest are $\O(\varepsilon^5)$. }
  \label{fig:eval-fsmooth-kernel-2d}
\end{figure}

\subsubsection{The multivariate case}
\label{sec:multivariate-case}

The general, multivariate case requires more care. Polynomials continue to play a central role in the flat limit, but when $d>1$ they appear naturally in groups of equal degree. For instance, in $d=2$, we may write  $\vect{x}=\left[\begin{smallmatrix}y & z \end{smallmatrix}\right]^{\T}$ and the first few monomials are as follows:
\begin{itemize}
\item Degree 0: $1$
\item Degree 1: $y$, $z$
\item Degree 2: $y^2$, $yz$, $z^2$
\item Degree 3: $y^3$, $y^2z$, $yz^2$, $z^3$
\end{itemize}
etc. Note that there is one monomial of degree $0$, two degree-$1$ monomials, three degree-$2$ monomials, and so on. If $d=1$ there is a single monomial in each group, 

\noindent and here lies the essence of the difference between the univariate and multivariate cases.

In infinitely smooth kernels like the Gaussian kernel, as shown in \cite{schaback2005multivariate,wathen2015eigenvalues}, there are as many
eigenvalues of order $\O(\varepsilon^{2k})$ as there are monomials of degree $k$ in
dimension $d$: for instance, there are 4 monomials of degree 3 in dimension 2,
and so 4 eigenvalues of order $\O(\varepsilon^{8})$. 
An example is shown in 
\Cref{fig:eval-gaussian-kernel-2d}. In finitely smooth kernels
like the exponential kernel, there are $r$ successive groups of eigenvalues with
order $\O(1)$, $\O(\varepsilon^2)$, ..., up to $\O(\varepsilon^{2r-2})$. Following that, all remaining eigenvalues have order $\O(\varepsilon^{2r-1})$, just like in the one-dimensional case. 

We show in \Cref{thm:nd_smooth_ev}, that the main terms of these
eigenvalues in each group can be computed from the QR factorization of the
Vandermonde matrix and a Schur complement associated  with the Wronskian matrix
of the kernel. In the finite smoothness case (see \Cref{fig:eval-fsmooth-kernel-2d}), the same expansions are valid until the smoothness order, and the last group of eigenvalues is given by the eigenvalues of the projected distance matrix, as in the one-dimensional case.

Finally, in the  multivariate case, \Cref{thm:nd_smooth_ev} characterises the
eigenprojectors.
In a nutshell, the invariant subspaces associated with each group of
eigenvalues are spanned by orthogonal polynomials of a certain order. 
The eigenvectors are the subject of a conjecture given in section \ref{sec:eigenvectors}, which we believe to be quite solid.

\subsection{Overview of tools used in the paper}
For finding the orders of the eigenvalues, as in \cite{schaback2005multivariate,wathen2015eigenvalues}, we use the Courant-Fischer minimax principle (see  \Cref{lem:courant_fischer}).
However, unlike \cite{schaback2005multivariate,wathen2015eigenvalues}, we do not use directly  the results of Micchelli \cite{micchelli1986interpolation}, but rather rotate the kernel matrices using the $Q$ factor in the QR factorization of the Vandermonde matrix, and find the expansion of rotated matrices from the Taylor expansion of the kernel.

The key results are the expansions for the determinants of $\matr{K}_{\varepsilon}$, which use the expansions of rotated kernel matrices.
Our results on determinants (Theorems \ref{thm:det_1d_smooth}, \ref{thm:det_1d_finite_smoothness}, \ref{thm:det_nd_smooth}, and \ref{thm:det_nd_finite_smoothness}) generalize those of Lee \& Micchelli \cite{lee2013collocation}.
The next key observation is that principal submatrices of $\matr{K}_{\varepsilon}$ are also kernel matrices, hence the results on determinants imply the results on expansions of elementary symmetric polynomials of eigenvalues (via the correspondence between elementary symmetric polynomials, see \Cref{lem:esp-sum-minors}, and the Binet-Cauchy formula).
Finally, the main terms of the eigenvalues can be retrieved from the main terms of the elementary symmetric polynomials, as shown in \Cref{lem:orders_lambda_to_esp}.
An important tool for the multivariate and finite smoothness case is \Cref{lem:esp_coef_perturbation} on low-rank perturbation of elementary symmetric polynomials that we could not find elsewhere in the literature

To study the properties of the eigenvectors, we 
use analyticity of  the eigenvalues and eigenprojectors for Hermitian analytic matrix-valued functions \cite{kato1995perturbation}. 
By using an extension of the Courant-Fischer principle (see \Cref{lem:courant_fischer_ev}),
we can fully characterise the limiting eigenvectors in the univariate case, and obtain the limiting invariant subspaces for the groups of the limiting eigenvectors in the multivariate case.
Moreover, by using the perturbation expansions from  \cite{kato1995perturbation},
we can find the last individual eigenvectors in the finitely smooth case.

We note that the multivariate case requires a number of technical assumptions on the arrangement of points, which are typical for multivariate polynomial interpolation.
For the results on determinants, no assumptions are required.
However, for getting the results on the eigenvalues or eigenvectors at a certain order of $\varepsilon$, we need a relaxed version to the well-known unisolvency condition \cite{schaback2005multivariate,fornberg2004observations}, namely  the multivariate Vandermonde  matrix up to the corresponding degree to be full rank.

\subsection{Organisation of the paper}
\label{sec:organisation}
In an attempt to make the paper reader-friendly, it is organized as follows.
In \Cref{sec:1dkernels} we recall the main terminology for  1D kernels.
In \Cref{sec:known_results} we gather well-known (or not so well-known) results on eigenvalues, determinants, elementary symmetric polynomials and their perturbations, which are key tools used in the paper.
\Cref{sec:1d_results} contains the main results on determinants, eigenvalues, and eigenvectors  in the univariate ($d=1$) case.
While these results are special cases of the multivariate case ($d>1$), the latter  is burdened with heavier notation due to the
complexity of dealing with multivariate polynomials.
 To get a gist of the results
and techniques, the reader is advised to  first consult the case $d=1$. 
In \Cref{sec:nd_notations}, we introduce all the needed notation to handle the multivariate case.
\Cref{sec:nd_results} contains the main results of the paper on determinants, eigenvalues, and groups of eigenvectors in the multivariate case.
Thus \Crefrange{sec:1dkernels}{sec:nd_results}  are quite self-contained and contain most of the results in the paper, except the result on precise locations of the last group of eigenvectors in the finite smoothness case. 
In \Cref{sec:kato}, we provide a brief summary on analytic perturbation theory, needed only for proving the stronger result on eigenvectors for finitely smooth kernels in \Cref{sec:eigenvectors}. 

\section{Background and main notation}\label{sec:1dkernels}
This section contains main definitions and examples of kernels, mostly in the 1D case (with multivariate extensions given in \Cref{sec:nd_notations}).
We assume that  the kernel $K: \Omega \times \Omega \to \RR$ is in the class $\calC^{(\ell,\ell)} (\Omega)$, $\Omega = (-a; a)$, i.e. all the partial derivatives $\frac{\partial^2}{\partial x^i \partial y^j} K $ exist and are continuous for $0 \le i,j \le \ell$ on $\Omega \times \Omega$.

We will often need the following short-hand notation for partial derivatives
\[
K^{(i,j)} \eqdef \frac{\partial^{i+j}}{\partial x^i \partial y^j} K,
\]
and we introduce the so-called Wronskian matrix
\begin{equation}\label{eq:Wronskian}
\Wronsk{k} \eqdef
\begin{bmatrix}
\frac{K^{(0,0)}(0,0)}{0!0!}  & \cdots & \frac{K^{(0,k)}(0,0)}{0!k!} \\
\vdots &  & \vdots \\
\frac{K^{(k,0)}(0,0)}{k!0!}  & \cdots & \frac{K^{(k,k)}(0,0)}{k!k!} 
\end{bmatrix}.
\end{equation}
\subsection{Translation invariant  and radial kernels}
Let us consider an important example of a translation  invariant \eqref{eq:kernel_trans_general}   kernel, which in the univariate case becomes 
$K(x,y) = \varphi(x-y)$.
We assume that $\varphi\in \calC^{2r}(-2a,2a)$; hence, $\varphi$ has a Taylor expansion around $0$
\[
\varphi(t) = \alpha_0 + \alpha_1 t+  \alpha_2 t^2  + \cdots +\alpha_{2r} t^{2r}  + o(t^{2r}), \quad \text{where} \; \alpha_k \eqdef \frac{\varphi^{(k)}(0)}{k!}.
\]
Therefore $K \in \calC^{(r,r)}(\Omega\times \Omega)$ and its derivatives at $0$ are
\begin{equation}\label{eq:wronskian_rbf}
\frac{K^{(i,j)}(0,0)}{i!j!} = \frac{(-1)^{j} \varphi^{(i+j)} (0)}{i!j!} = (-1)^{j} {i+j \choose j} \alpha_{i+j},
\end{equation}
i.e.,  the  Wronskian matrix has the form:
\[
\Wronsk{k} =
\begin{bmatrix}
\alpha_0 & -\alpha_1 & \alpha_{2} & -\alpha_3   & \cdots \\
\alpha_1 & -2\alpha_2 & 3\alpha_3 & -4\alpha_4  &  \cdots  \\
\alpha_2 & -3\alpha_3 & 6 \alpha_4 & -10\alpha_5  &\cdots  \\
\alpha_3 & -4\alpha_4 & 10\alpha_5 & -20 \alpha_6  & \cdots  \\
\vdots  & \vdots & \vdots & \vdots   & \ddots \\
\end{bmatrix}.
\]
A special case of the translational kernels are  smooth and radial, which will be considered in the next subsection.
The simplest example is the Gaussian kernel with
\[
\varphi(t) = \exp(-t^2) = 1 - t^2 + \frac{t^4}{2!}  - \frac{t^6}{3!} + \cdots ,
\]
i.e. for all  integer $\nu \ge 0$
\[
\alpha_{2\nu} = \frac{(-1)^\nu}{\nu!}, \quad \alpha_{2\nu+1} = 0.
\]
In this case, the Wronskian matrix becomes
\[
\Wronsk{k} =
\begin{bmatrix}
1 & 0 & -1& 0   & \cdots \\
0 & 2 & 0 & -2  &  \cdots  \\
-1 &0 & 3 & 0  &\cdots  \\
0 & -2 & 0 & -\frac{10}{3}  & \cdots  \\
\vdots  & \vdots & \vdots & \vdots   & \ddots \\
\end{bmatrix}.
\]
An important subclass consists of radial kernels \eqref{eq:kernel_rbf_general}, i.e., $K(x,y) = f(|x-y|)$ in the univariate case.
We put   the following  assumptions on~$f$:
\begin{itemize}
\item $f\in \calC^{2r}(-2a,2a)$;
\item the highest derivative of odd order is not zero, i.e.,  $f^{(2r-1)}(0) \neq 0$;
\item the lower derivatives with odd order vanish, i.e., $f^{(2\ell-1)}(0) = 0$ for $\ell < r$.
\end{itemize}
If the function $f(t)$ satisfies these assumptions, then $r$ is called the \emph{order of smoothness} of $K$.
Note that $f$ admits a Taylor expansion
\begin{equation}
\label{eq:finite-smoothness-expansion}
f(t) = f_0  + f_2 t^2  + \cdots + f_{2r-2} t^{2r-2} + t^{2r-1} ( f_{2r-1} + \O(t)),
\end{equation}
where $f_k = f^{(k)}(0)/k!$ is a shorthand notation for the scaled derivative at $0$.
For example, for the exponential kernel $\exp(-|x-y|)$, we have $f_{0}=1$ and $f_{1}=-1$, so the smoothness order is $r=1$. For the $\mathcal{C}^{2}$ Mat\'ern kernel $(1+|x-y|)\exp(-|x-y|)$, we have $f_{0}=1,f_{1}=0,f_{2}=-1/2,f_{3}=1/3$, so the smoothness order is $r=2$. 

\subsection{Distance matrices, Vandermonde matrices,  and their properties}
In the general multivariate case, we define ${\matr{D}_{(k)}}$ to be  the $k$-th Hadamard power of the Euclidean distance matrix $\bD_{(1)}$, i.e.,
\[
\bD_{(k)}= \left[ \left\| \vect{x}_{i} - \vect{x}_{j} \right\|_2^{k} \right]^{n,n}_{i,j=1}.
\]
Next, we focus on the univariate case (the multivariate case will be discussed later in \Cref{sec:nd_notations})
and denote by $\matr{V}_{\le k}$ the univariate Vandermonde matrix up to degree $k$ 
\begin{equation}\label{eq:Vandermonde1D}
\matr{V}_{\le k} = \begin{bmatrix}
1 & x_1 & \cdots & x^{k}_1\\
\vdots &\vdots  & &\vdots  \\
1 & x_n & \cdots & x^{k}_n 
\end{bmatrix},
\end{equation}
which has rank $\min(n, k+1)$ if the nodes are distinct.
In particular, the matrix $\matr{V}_{\le n-1}$ is square and invertible for distinct nodes.

For  even $k$, the Hadamard powers  ${\matr{D}_{(k)}}$  of the distance matrix can be expressed via the columns of the Vandermonde matrix using the binomial expansion
\begin{equation}\label{eq:dist_binomial_expansion}
{\matr{D}_{(2\ell)}}  = \sum\limits_{j=0}^{2\ell} (-1)^{j} {2\ell \choose j} \vect{v}_{j} \vect{v}_{2\ell-j}^{\T},
\end{equation}
where $\vect{v}_k \eqdef \begin{bmatrix}x^{k}_1 & \cdots& x^{k}_n \end{bmatrix}^{\T}$ are columns of Vandermonde matrices.
Therefore, $\rank \matr{D}_{(2\ell)} = \min(2\ell+1,n)$ if all points $x_k$ are distinct.

On the other hand, for $k$ odd, the matrices $\bD_{(k)}$ exhibit an entirely different set of properties. Of most interest here is conditional positive-definitess, which guarantees that the distance matrices are positive definite when projected onto a certain subspace. The following result appears e.g., in \cite[Chapter 8]{fasshauer2007meshfree} but follows directly from an earlier paper by Micchelli \cite{micchelli1986interpolation}. 
\begin{lemma}\label{lem:cpd}
For a distinct node set $\X \subset \RR$ of size $n>r \ge 1$, we let $\matr{B}$ be a full column rank matrix such that $\matr{B}^{\T}\matr{V}_{\le r-1} = 0$. Then the matrix $(-1)^r \matr{B}^{\T} \matr{D}_{(2r-1)} \matr{B}$ is positive definite.
\end{lemma}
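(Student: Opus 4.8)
The plan is to reduce the claim to the classical conditional positive-definiteness of the radial function $(-1)^r r^{2r-1}$ and then package it so that the projection is expressed through a matrix $\matr{B}$ annihilating the Vandermonde matrix. First I would recall the well-known fact (Micchelli's theorem \cite{micchelli1986interpolation}, see also \cite{fasshauer2007meshfree}) that the univariate RBF $\phi(t) = (-1)^r |t|^{2r-1}$ is conditionally positive definite of order $r$: that is, for any set of distinct nodes $\X \subset \RR$ and any coefficient vector $\vect{c} \in \RR^n$ satisfying the moment conditions $\sum_{i=1}^n c_i x_i^j = 0$ for $j = 0, 1, \dots, r-1$, one has $\vect{c}^{\T} \matr{D}_{(2r-1)} \vect{c} \ge 0$, with equality only if $\vect{c} = 0$ (strict definiteness here uses $n > r$, so that the nodes are ``genuinely'' interpolating up to degree $r-1$ and no nontrivial such $\vect{c}$ is in the kernel of the relevant interpolation operator).

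Next I would translate the moment conditions into the statement $\matr{B}^{\T}\matr{V}_{\le r-1} = 0$. The condition $\sum_i c_i x_i^j = 0$ for $j=0,\dots,r-1$ is exactly $\matr{V}_{\le r-1}^{\T}\vect{c} = 0$, i.e. $\vect{c}$ lies in the (left) null space of $\matr{V}_{\le r-1}$, a subspace of dimension $n - r$ since $\matr{V}_{\le r-1}$ has full column rank $r$ for distinct nodes with $n \ge r$. If $\matr{B} \in \RR^{n \times (n-r)}$ is any full-column-rank matrix with $\matr{B}^{\T}\matr{V}_{\le r-1} = 0$, then its columns span precisely this null space, so every vector of the form $\vect{c} = \matr{B}\vect{z}$ with $\vect{z} \in \RR^{n-r}$ satisfies the moment conditions, and conversely. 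Applying the conditional positive definiteness to $\vect{c} = \matr{B}\vect{z}$ gives $\vect{z}^{\T}\bigl((-1)^r \matr{B}^{\T}\matr{D}_{(2r-1)}\matr{B}\bigr)\vect{z} = (-1)^r \vect{c}^{\T}\matr{D}_{(2r-1)}\vect{c} \ge 0$, and this is zero only when $\vect{c} = \matr{B}\vect{z} = 0$, which since $\matr{B}$ has full column rank forces $\vect{z} = 0$. Hence $(-1)^r \matr{B}^{\T}\matr{D}_{(2r-1)}\matr{B}$ is positive definite.

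The main obstacle, and the only part requiring genuine care, is the strict positivity (as opposed to mere positive semidefiniteness) and the precise role of the hypothesis $n > r$. One must argue that the moment-orthogonality subspace contains no nonzero vector $\vect{c}$ on which the quadratic form degenerates; this is where Micchelli's strict result is invoked, and it is worth noting that $n > r$ guarantees the subspace is nonzero (dimension $n - r \ge 1$), so the statement is non-vacuous, while simultaneously the strictness theorem applies because the nodes are distinct. If one instead wanted a self-contained derivation rather than citing \cite{micchelli1986interpolation}, one could invoke the integral representation of $|t|^{2r-1}$ against a measure whose Fourier transform is positive away from the origin, combined with the fact that a nonzero $\vect{c}$ satisfying the moment conditions yields a nonzero generalized function whose transform cannot vanish identically on the complement of the origin — but for the purposes of this paper, citing the known conditional positive-definiteness result is the cleanest route, and the only real content added here is the reformulation via $\matr{B}$, which is the routine linear-algebra step above.
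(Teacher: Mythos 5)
Your proposal is correct and follows essentially the same route as the paper, which gives no proof of its own but simply cites Micchelli's conditional positive-definiteness result (via \cite{fasshauer2007meshfree}, ch.~8, and \cite{micchelli1986interpolation}); your contribution is just the routine linear-algebra translation of the moment conditions into $\matr{B}^{\T}\matr{V}_{\le r-1}=0$, which is exactly how the cited result is meant to be applied. (Minor quibble: $\matr{B}$ need not span the whole null space of $\matr{V}_{\le r-1}^{\T}$, but only the forward implication is needed, so your argument stands.)
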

For instance, if $r=1$, we may pick any basis $\matr{B}$ orthogonal to the  vector $\mathbf{1}$, and the lemma implies that $(-\matr{B}^{\T} \matr{D}_{(1)} \matr{B})$ has $n-1$ positive eigenvalues. We note for future reference that the result generalises almost unchanged to the multivariate case. 

\subsection{Scaling and expansions of kernel matrices}
In this subsection, we consider the general multivariate case.
Given a general  kernel $K$, we define its scaled version as
\begin{equation}\label{eq:general_kernel_scaled}
K_{\varepsilon}(\vect{x},\vect{y}) = K(\varepsilon\vect{x},\varepsilon\vect{y}),
\end{equation}
while for the specific case of radial kernels we use the following form 
\begin{equation}\label{eq:rbf_kernel_scaled}
K_{\varepsilon}(\vect{x}, \vect{y})= f(\varepsilon\norm{\vect{x}-\vect{y}}_2).
\end{equation}
Note that the definitions \eqref{eq:general_kernel_scaled} and
\eqref{eq:rbf_kernel_scaled} coincide for nonnegative $\varepsilon$, but differ
if we formally take other values (e.g., complex) of $\varepsilon$.
Depending on context, we will use one or the other of the definitions later on, especially when we talk about analyticity of kernel matrices.

Using  \eqref{eq:finite-smoothness-expansion} and \eqref{eq:rbf_kernel_scaled}, we may write the scaled kernel matrix \eqref{eq:scaled_kernel}, as
\begin{equation}\label{eq:finite-smoothness-kernel-expansion}
\bK_\varepsilon = f_{0}\bD_{(0)} +\varepsilon^2   f_{2} \bD_{(2)} + \cdots +\varepsilon^{2r-2} f_{2r-2}   \bD_{(2r-2)}+ \varepsilon^{2r-1}f_{2r-1}   \bD_{(2r-1)}  + \O(\varepsilon^{2r}).
\end{equation}
In the univariate case, \eqref{eq:dist_binomial_expansion} gives a way to rewrite the expansion  \eqref{eq:finite-smoothness-kernel-expansion} as
\begin{equation}\label{eq:fs_kernel_wronskian_diag}
\matr{K}_{\varepsilon} =  \sum_{\ell=0}^{r-1} \varepsilon^{2j} \underbrace{\matr{V}_{\le 2\ell} \matr{W}_{\diagup 2\ell} \matr{V}^{\T}_{\le 2\ell}}_{f_{2\ell}\matr{D}_{(2\ell)}} +
\varepsilon^{2r-1} f_{2r-1} \matr{D}_{(2r-1)}+ 
 \O(\varepsilon^{2r}),
\end{equation}
where $\matr{W}_{\diagup s} \in \RR^{(s+1) \times (s+1)}$ has nonzero elements only on its antidiagonal:
\begin{equation}\label{eq:wdiag_def_1d}
\Big(\matr{W}_{\diagup s}\Big)_{i+1,s-i+1} \eqdef f_{s} (-1)^i {s \choose i}.
\end{equation}
For example,
\[
\matr{W}_{\diagup 2} =f_2 
\begin{bmatrix}
  0 & 0 & 1 \\
  0 & -2 & 0 \\
  1 & 0 & 0
\end{bmatrix}.
\]
In fact,  from \eqref{eq:wronskian_rbf}, non-zero elements of  $\matr{W}_{\diagup s}$ are scaled  derivatives of the kernel:
\[
\Big(\matr{W}_{\diagup s}\Big)_{i+1,s-i+1} =\frac{K^{(i,s-i)}(0,0)}{i!(s-i)!}; 
\]
this justifies the notation $\matr{W}_{\diagup s}$ (i.e.,  an antidiagonal of the Wronskian matrix).

\section{Determinants and elementary symmetric polynomials}\label{sec:known_results}
In this paper, we will heavily use the  elementary symmetric polynomials of eigenvalues,
and we collect in this section some useful (more or less known) facts about them.

\subsection{Eigenvalues, principal minors and elementary symmetric polynomials}
The $k$-th elementary symmetric polynomial of $\lambda_1, \ldots, \lambda_{n}$ is defined as:
\begin{equation}
\label{eq:esp}
e_k(\lambda_1,\ldots,\lambda_n) = \sum_{\substack{\#\calY = k\\ \calY \subset \{1,\ldots,n\}}} \, \prod_{i \in \calY} \lambda_i,
\end{equation}
i.e., the sum is running over all possible  subsets of $\{1,\ldots,n\}$ of size $k$.
In particular, $e_1(\lambda_1,\ldots,\lambda_n)=\sum_{i=1}^{n} \lambda_i$, $e_2(\lambda_1,\ldots,\lambda_n) =
\sum_{i<j} \lambda_i\lambda_j$, and $e_n(\lambda_1,\ldots,\lambda_n) = \prod_{i=1}^{n} \lambda_i$.

Next, we consider the elementary symmetric polynomials   applied to eigenvalues of matrices, and define (with some abuse of notation):
\[
e_k(\bA) \eqdef e_k(\lambda_1(\bA),\ldots,\lambda_n(\bA)).
\]
Then the first and the last polynomials are the trace and determinant of $\bA$:
\[
e_1(\bA)= \Tr \bA,\quad
e_n(\bA) = \det \bA.
\]
This fact is a special case of a more general result on sums of principal minors.
\begin{theorem}[{\cite[Theorem 1.2.12]{horn1990matrix}}]\label{lem:esp-sum-minors}
\begin{equation}
\label{eq:esp-sum-minors}
e_k(\bA) = \sum_{\substack{\calY \subset \{1,\ldots,n\} \\ \# \calY  = k}} \det (\bA_{\calY,\calY})
\end{equation}
where $\bA_{\calY,\calY}$ is a submatrix of $\bA$ with  rows and columns indexed by $\calY$, i.e.
the sum runs over all principal minors of size $k \times k$.
\end{theorem}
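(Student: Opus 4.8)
The plan is to derive the identity by expanding the characteristic polynomial $\chi_{\bA}(t)=\det(t\bI-\bA)$ in two different ways and matching coefficients. By definition $e_k(\bA)=e_k(\lambda_1(\bA),\dots,\lambda_n(\bA))$, where the $\lambda_i$ are the roots of $\chi_{\bA}$ counted with algebraic multiplicity, so that
\[
\chi_{\bA}(t)=\prod_{i=1}^{n}(t-\lambda_i)=\sum_{k=0}^{n}(-1)^k\,e_k(\bA)\,t^{n-k}.
\]
Hence it suffices to show that the coefficient of $t^{n-k}$ in $\det(t\bI-\bA)$ equals $(-1)^k\sum_{\#\calY=k}\det(\bA_{\calY,\calY})$.

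First I would expand $\det(t\bI-\bA)$ by multilinearity of the determinant in its columns. Writing $\bA=[\,\vect{a}_1\mid\cdots\mid\vect{a}_n\,]$ and letting $\vect{e}_1,\dots,\vect{e}_n$ be the standard basis vectors, the $j$-th column of $t\bI-\bA$ is $t\vect{e}_j-\vect{a}_j$; choosing in each column independently the summand $t\vect{e}_j$ or the summand $-\vect{a}_j$ gives
\[
\det(t\bI-\bA)=\sum_{S\subseteq\{1,\dots,n\}}t^{\#S}(-1)^{n-\#S}\det\bM_S,
\]
where $\bM_S$ denotes the matrix whose $j$-th column is $\vect{e}_j$ if $j\in S$ and $\vect{a}_j$ if $j\notin S$.

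Next I would evaluate $\det\bM_S$ by cofactor expansion along the columns indexed by $S$. Each such column is a standard basis vector whose unique nonzero entry lies on the diagonal, so each expansion step deletes one matching row and column with cofactor sign $+1$, and this diagonal property is preserved at every step; after exhausting $S$ one is left with exactly the principal submatrix $\bA_{\calY,\calY}$ on $\calY:=\{1,\dots,n\}\setminus S$, i.e.\ $\det\bM_S=\det\bA_{\calY,\calY}$. Substituting and putting $k:=\#\calY=n-\#S$ yields
\[
\det(t\bI-\bA)=\sum_{k=0}^{n}(-1)^k\,t^{n-k}\sum_{\#\calY=k}\det\bA_{\calY,\calY},
\]
and comparison with the factored form of $\chi_{\bA}$, after cancelling $(-1)^k$, establishes \eqref{eq:esp-sum-minors}.

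The only point demanding any care is the sign and index bookkeeping in the iterated cofactor expansion identifying $\det\bM_S$ with a principal minor; beyond that everything is routine, and the statement is in fact classical. If one prefers to sidestep the bookkeeping, an alternative is to use $e_k(\bA)=\Tr(\wedge^k\bA)$ and to compute this trace in the basis of $k$-fold exterior products of the $\vect{e}_i$, whose diagonal entries are precisely the $k\times k$ principal minors of $\bA$.
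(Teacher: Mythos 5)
Your argument is correct. Note, however, that the paper does not prove this statement at all: it is quoted as a classical fact with a citation to Horn and Johnson, so there is no in-paper proof to compare against. Your derivation --- factoring $\det(t\bI-\bA)$ over the eigenvalues to read off $(-1)^k e_k(\bA)$ as the coefficient of $t^{n-k}$, and independently computing that coefficient by multilinearity in the columns of $t\bI-\bA$ followed by cofactor expansion of each $\bM_S$ down to a principal submatrix --- is the standard textbook route (essentially the one in the cited reference), and the sign and index bookkeeping you flag is handled correctly: the pivotal entries stay on the diagonal of the successive submatrices, so every cofactor sign is $+1$ and $\det\bM_S=\det\bA_{\calY,\calY}$ with $\calY$ the complement of $S$. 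The alternative you mention via $e_k(\bA)=\Tr(\wedge^k\bA)$ is an equally valid shortcut.
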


\begin{remark}\label{rem:esp_char_poly}
The scaled symmetric polynomials $(-1)^{k} e_k(\bA)$ are the coefficients of the characteristic polynomial $\det(\lambda \matr{I} -\matr{A})$ at the coefficient $\lambda^{n-k}$. 
\end{remark}

\subsection{Orders of elementary symmetric polynomials}
Next, we assume that  $\lambda_1(\varepsilon), \ldots, \lambda_n(\varepsilon)$ are functions of some small parameter $\varepsilon$ and we are interested in the orders  of the corresponding elementary symmetric polynomials 
\[
 e_s(\varepsilon) \eqdef e_s(\lambda_1(\varepsilon),\ldots,\lambda_n(\varepsilon)),\quad 1 \le s \le n,
\]
as $\varepsilon \to 0$.
The following obvious observation will be important.
\begin{lemma}\label{lem:orders_lambda_to_esp}
Assume that 
 \[
 \lambda_1(\varepsilon) =  \O(\varepsilon^{L_1}),\; \lambda_2(\varepsilon) =  \O(\varepsilon^{L_2}),\; \ldots,\;\lambda_n(\varepsilon) =  \O(\varepsilon^{L_n}),
 \]
as $\varepsilon\to 0$ and $0\le L_1 \le \cdots \le L_n$ are some integers.
Then it holds that
 \[
 e_1(\varepsilon) =  \O(\varepsilon^{L_1}),\;e_2(\varepsilon) =  \O(\varepsilon^{L_1+L_2}),\;\ldots,\;e_n(\varepsilon) =  \O(\varepsilon^{L_1+\cdots+ L_n}).
 \]
\end{lemma}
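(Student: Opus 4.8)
The plan is to combine the formula for elementary symmetric polynomials as sums of products of eigenvalues, \eqref{eq:esp}, with the elementary fact that a sum of terms, each of which is $\O(\varepsilon^{m})$ for some $m \ge M$, is itself $\O(\varepsilon^{M})$. Fix $s$ with $1 \le s \le n$. By definition,
\[
e_s(\varepsilon) = \sum_{\substack{\calY \subset \{1,\ldots,n\} \\ \#\calY = s}} \prod_{i \in \calY} \lambda_i(\varepsilon),
\]
so it suffices to bound the order of each summand and then take the minimum of the exponents over all index sets $\calY$ of size $s$.

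The key step is the observation that, for any $\calY = \{i_1 < \cdots < i_s\}$, the hypotheses $\lambda_{i}(\varepsilon) = \O(\varepsilon^{L_{i}})$ give
\[
\prod_{k=1}^{s} \lambda_{i_k}(\varepsilon) = \O\big(\varepsilon^{L_{i_1} + \cdots + L_{i_s}}\big),
\]
since the product of functions that are $\O(\varepsilon^{a})$ and $\O(\varepsilon^{b})$ is $\O(\varepsilon^{a+b})$. Because the $L_i$ are nondecreasing, the exponent $L_{i_1} + \cdots + L_{i_s}$ is minimized over all size-$s$ subsets by the choice $\calY = \{1,\ldots,s\}$, giving the lower bound $L_1 + \cdots + L_s$. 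Hence every summand in $e_s(\varepsilon)$ is $\O(\varepsilon^{L_1 + \cdots + L_s})$, and a finite sum of such terms is again $\O(\varepsilon^{L_1 + \cdots + L_s})$, which is exactly the claim. The cases $s = 1$ and $s = n$ recover $e_1(\varepsilon) = \O(\varepsilon^{L_1})$ and $e_n(\varepsilon) = \O(\varepsilon^{L_1 + \cdots + L_n})$.

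There is essentially no obstacle here: the statement is, as the paper itself says, an "obvious observation," and the only mild point requiring care is the monotonicity argument showing that $\{1,\ldots,s\}$ realizes the minimal exponent among all size-$s$ index sets — this is immediate from $0 \le L_1 \le \cdots \le L_n$ by a term-by-term comparison. One could also phrase the whole argument via \Cref{rem:esp_char_poly} and the characteristic polynomial, but the direct expansion above is cleanest. I would present it in two or three sentences rather than as a formal multi-step proof.
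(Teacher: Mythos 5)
Your proof is correct and follows essentially the same route as the paper's own (one-line) argument: expand $e_s$ via its definition, use that a product of $\O(\varepsilon^{a})$ and $\O(\varepsilon^{b})$ terms is $\O(\varepsilon^{a+b})$, and note that the smallest attainable exponent over size-$s$ subsets is $L_1+\cdots+L_s$ by monotonicity of the $L_i$. You merely spell out the subset-minimization and finite-sum steps that the paper leaves implicit.
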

\begin{proof}
The proof follows from the definition of $e_s$, the fact that the product $f(\varepsilon) = \O(\varepsilon^a)$ and $g(\varepsilon) = \O(\varepsilon^b)$ is of the order$f(\varepsilon)g(\varepsilon) = \O(\varepsilon^{a+b})$.
\end{proof}
We will need a refinement of \Cref{lem:orders_lambda_to_esp} concerning the main terms of such functions. 
We distinguish two situations: when the orders of $\lambda_k$ are separated, and when they form groups. For example, if $\lambda_1(\varepsilon) = a + \O(\varepsilon)$, $\lambda_2(\varepsilon) = \varepsilon( b + \O(\varepsilon))$, $\lambda_3(\varepsilon) = \varepsilon^2( c + \O(\varepsilon))$, then  the main terms of elementary symmetric  polynomials are products of the main terms of $\lambda_k$:
\[
e_1(\varepsilon) = a + \O(\varepsilon), \quad e_2(\varepsilon) = \varepsilon(ab + \O(\varepsilon)), \quad e_3(\varepsilon) = \varepsilon^{3}(abc + \O(\varepsilon)).
\]
On the other hand, in  the case $\lambda_1(\varepsilon) = a + \O(\varepsilon)$, $\lambda_{k+1}(\varepsilon) =  \varepsilon(b_k + \O(\varepsilon))$, $k \in \{1,2,3\}$,
the behaviour of the main terms of elementary symmetric polynomials is different:
\begin{align*}
e_1(\varepsilon) &= a +\O(\varepsilon), \quad e_2(\varepsilon) = \varepsilon(a(b_1+b_2+b_3) + \O(\varepsilon)), \\
e_3(\varepsilon)  &= \varepsilon^2(a(b_1b_2+b_1b_3+b_2b_3) + \O(\varepsilon)),\quad
e_4(\varepsilon) = \varepsilon^3(ab_1b_2b_3 +  \O(\varepsilon)).
\end{align*}
The following two lemmas generalize these observations to arbitrary orders. 
\begin{lemma}\label{lem:main_terms_esp_separated}
Suppose that $\lambda_1(\varepsilon), \ldots, \lambda_n(\varepsilon)$ have the form 
\begin{equation}\label{eq:lambda_orders_differentiable}
\lambda_1(\varepsilon) =  \varepsilon^{L_1} (\widetilde{\lambda}_1 + \O(\varepsilon)), 
\ldots\ ,
\lambda_n(\varepsilon) =  \varepsilon^{L_n} (\widetilde{\lambda}_n + \O(\varepsilon)),
\end{equation}
for some integers  $0\le L_1 \le \cdots \le L_r$. Then
\begin{enumerate}
\item The elementary symmetric polynomials have the form
\begin{equation}\label{eq:esp_orders_differentiable}
\begin{split}
e_1(\varepsilon) &=  \varepsilon^{L_1} (\widetilde{e}_1 + \O(\varepsilon)) \\
e_2(\varepsilon) &=  \varepsilon^{L_1+L_2} (\widetilde{e}_2 + \O(\varepsilon)) \\
\vdots \\
e_n(\varepsilon) &=  \varepsilon^{L_1+ \cdots + L_n} (\widetilde{e}_n + \O(\varepsilon)).
\end{split}
\end{equation}
\item If either $s=n$ or $L_{s}< L_{s+1}$, the main term $\widetilde{e}_s$ can be expressed as
\[
\widetilde{e}_s = \widetilde{\lambda}_1 \cdots \widetilde{\lambda}_s.
\]
In particular, if $s>1$ and $\widetilde{e}_{s-1} \neq 0$, then the main term $\widetilde{\lambda}_s$ can be found as
\[
\widetilde{\lambda}_s = \frac{\widetilde{e}_s}{\widetilde{e}_{s-1}}.
\]
\end{enumerate}
\end{lemma}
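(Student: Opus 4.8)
The plan is to work directly from the combinatorial formula \eqref{eq:esp} for $e_s(\varepsilon) = \sum_{\#\calY = s} \prod_{i\in\calY} \lambda_i(\varepsilon)$, summing over size-$s$ subsets $\calY \subseteq \{1,\ldots,n\}$. For each such $\calY$, write $L(\calY) = \sum_{i\in\calY} L_i$, so that $\prod_{i\in\calY}\lambda_i(\varepsilon) = \varepsilon^{L(\calY)}\bigl(\prod_{i\in\calY}\widetilde\lambda_i + \O(\varepsilon)\bigr)$ by the product rule for $\O(\cdot)$ already invoked in the proof of \Cref{lem:orders_lambda_to_esp}. The first step is the purely order-theoretic observation that, because $L_1 \le \cdots \le L_n$, the quantity $L(\calY)$ over all $s$-subsets is minimized \emph{uniquely in value} by $L_1 + \cdots + L_s$ (the ``greedy'' choice $\calY = \{1,\ldots,s\}$), though not necessarily by a unique set when there are ties among the $L_i$. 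Factoring out $\varepsilon^{L_1+\cdots+L_s}$ from the sum then gives part (1):
\[
e_s(\varepsilon) = \varepsilon^{L_1+\cdots+L_s}\Bigl(\sum_{\substack{\#\calY=s\\ L(\calY)=L_1+\cdots+L_s}} \ \prod_{i\in\calY}\widetilde\lambda_i \ + \ \O(\varepsilon)\Bigr),
\]
so $\widetilde e_s = \sum_{\calY} \prod_{i\in\calY}\widetilde\lambda_i$, the sum running over the minimizing subsets.

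The second step handles part (2). Under the hypothesis that $s = n$, or $L_s < L_{s+1}$, I claim the \emph{only} $s$-subset achieving $L(\calY) = L_1 + \cdots + L_s$ is $\calY = \{1,\ldots,s\}$. Indeed, if $\calY \ne \{1,\ldots,s\}$ then $\calY$ omits some index $j \le s$ and contains some index $j' \ge s+1$; replacing $j'$ by $j$ strictly decreases $L(\calY)$ whenever $L_j < L_{j'}$. The gap condition $L_s < L_{s+1}$ (together with monotonicity) forces $L_j \le L_s < L_{s+1} \le L_{j'}$, so the replacement is a strict decrease, contradicting minimality; when $s=n$ there is nothing to omit. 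Hence the sum defining $\widetilde e_s$ collapses to the single term $\widetilde\lambda_1\cdots\widetilde\lambda_s$. The final formula $\widetilde\lambda_s = \widetilde e_s/\widetilde e_{s-1}$ is then immediate from $\widetilde e_{s-1} = \widetilde\lambda_1\cdots\widetilde\lambda_{s-1}$ (which is covered by the same hypothesis, since $L_{s-1} \le L_s$ and $s-1 \ge 1$ forces the minimizing $(s-1)$-subset to be $\{1,\ldots,s-1\}$ as well — here one uses $L_{s-1} < L_s$ or $s-1<s\le n$; actually one only needs $\widetilde e_{s-1}\neq 0$, which is assumed) together with the division being legitimate since $\widetilde e_{s-1}\neq 0$.

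The main obstacle, such as it is, is purely bookkeeping: one must be careful that the ``$\O(\varepsilon)$'' error terms coming from different subsets $\calY$ (which carry different powers $\varepsilon^{L(\calY)}$) all collect correctly into a single $\O(\varepsilon)$ relative error after factoring out the minimal power $\varepsilon^{L_1+\cdots+L_s}$ — this works because every non-minimizing subset contributes at order $\varepsilon^{L(\calY)}$ with $L(\calY) \ge L_1+\cdots+L_s+1$ (integrality of the $L_i$ is used here), hence is absorbed into the relative $\O(\varepsilon)$. No deep ingredient is needed beyond \eqref{eq:esp}, the monotonicity of the $L_i$, and elementary manipulation of $\O$-symbols.
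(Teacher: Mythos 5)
Your argument is correct and is essentially the paper's own proof: expand $e_s(\varepsilon)$ over $s$-subsets, note that the exponent $\sum_{i\in\calY}L_i$ is minimized in value at $L_1+\cdots+L_s$, factor out $\varepsilon^{L_1+\cdots+L_s}$ (integrality absorbing all higher-order subsets into the relative $\O(\varepsilon)$) to get part (1), and under $s=n$ or $L_s<L_{s+1}$ use the exchange argument to show $\{1,\ldots,s\}$ is the unique minimizing subset, giving part (2). The only blemish is your parenthetical claim that the ratio formula needs only $\widetilde{e}_{s-1}\neq 0$: one also needs $\widetilde{e}_{s-1}=\widetilde{\lambda}_1\cdots\widetilde{\lambda}_{s-1}$, i.e.\ separation at level $s-1$ as well (for instance $L_1=L_2$, $s=n=2$ gives $\widetilde{e}_2/\widetilde{e}_1=\widetilde{\lambda}_1\widetilde{\lambda}_2/(\widetilde{\lambda}_1+\widetilde{\lambda}_2)\neq\widetilde{\lambda}_2$), although the paper's own statement and proof gloss over this point too and it is harmless in the intended applications where the $L_k$ are strictly increasing.
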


We  also need a generalization of  \Cref{lem:main_terms_esp_separated} to the case of a group of equal $L_k$.
\begin{lemma}\label{lem:main_terms_esp_group}
Let $\lambda_k(\varepsilon)$  and $L_k$ be as in \Cref{lem:main_terms_esp_separated} (i.e., $\lambda_k(\varepsilon)$ have the form \eqref{eq:lambda_orders_differentiable} and the corresponding $e_k(\varepsilon)$ have the form \eqref{eq:esp_orders_differentiable}), and define $L_0 = -1$, $L_{n+1} = +\infty$, for an easier treatment of border cases. 

If  for $1 \le m \le n-s$, there is a separated group of $m$ functions
\[
\lambda_{s+1}(\varepsilon), \ldots, \lambda_{s+m} (\varepsilon)
\]
of repeating degree, i.e.,
\begin{equation}\label{eq:group_degrees}
L_s <L_{s+1} = \cdots =  L_{s+m} < L_{s+m+1},
\end{equation}
then the main terms $\widetilde{e}_{s+k}$, $1\le k \le m$ in \eqref{eq:esp_orders_differentiable},
are connected with ESPs of the main terms $\widetilde{\lambda}_{s+k}$, $1\le k \le m$ as follows:
\[
\widetilde{e}_{s+k} = 
\begin{cases}
e_k(\widetilde{\lambda}_{s+1},\ldots,\widetilde{\lambda}_{s+m}), & s =0, \\
\widetilde{\lambda}_1 \cdots \widetilde{\lambda}_s e_k(\widetilde{\lambda}_{s+1},\ldots,\widetilde{\lambda}_{s+m}) = \widetilde{e}_s  e_k(\widetilde{\lambda}_{s+1},\ldots,\widetilde{\lambda}_{s+m}) , & s > 0.
\end{cases}
\]
In particular, if $s > 1$ and $\widetilde{e}_{s} \neq 0$, the ESPs for the main terms are equal to
\[
e_k(\widetilde{\lambda}_{s+1},\ldots,\widetilde{\lambda}_{s+m})  = \frac{\widetilde{e}_{s+k}}{\widetilde{e}_{s}},
\]
hence $\widetilde{\lambda}_{s+1},\ldots,\widetilde{\lambda}_{s+m}$ are the roots of the polynomial (see \Cref{rem:esp_char_poly})
\[
q(\lambda) = \widetilde{e}_{s} \lambda^m - \widetilde{e}_{s+1} \lambda^{m-1} + \widetilde{e}_{s+2} \lambda^{m-2} + \cdots + (-1)^m\widetilde{e}_{s+m}. 
\]
\end{lemma}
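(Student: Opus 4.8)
The plan is to work directly with the defining sum
$e_{s+k}(\varepsilon) = \sum_{\#\calY = s+k} \prod_{i\in\calY}\lambda_i(\varepsilon)$
and isolate its lowest-order term. Since $\lambda_i(\varepsilon) = \varepsilon^{L_i}(\widetilde{\lambda}_i + \O(\varepsilon))$, each summand indexed by $\calY$ has order $\varepsilon^{\sum_{i\in\calY}L_i}$ with leading coefficient $\prod_{i\in\calY}\widetilde{\lambda}_i$. So the first (and essential) task is purely combinatorial: identify exactly those $\calY$ of size $s+k$ that minimise $\sum_{i\in\calY}L_i$. Write $L^{*} \eqdef L_{s+1} = \cdots = L_{s+m}$ for the common degree of the group.

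\emph{Key claim.} Under \eqref{eq:group_degrees} and $L_1 \le \cdots \le L_n$, for $1 \le k \le m$ a set $\calY$ with $\#\calY = s+k$ satisfies $\sum_{i\in\calY}L_i = L_1 + \cdots + L_{s+k} = (L_1+\cdots+L_s) + kL^{*}$ if and only if $\calY = \{1,\dots,s\}\cup\calZ$ with $\calZ \subseteq \{s+1,\dots,s+m\}$ and $\#\calZ = k$. I would prove this by splitting an arbitrary $\calY$ into its intersections with the three blocks $\{1,\dots,s\}$, $\{s+1,\dots,s+m\}$, $\{s+m+1,\dots,n\}$, with sizes $a,b,c$; using $L_i \le L_s < L^{*}$ for $i \le s$ and $L_i \ge L_{s+m+1} > L^{*}$ for $i > s+m$, a one-line estimate shows that $\sum_{i\in\calY}L_i \ge (L_1+\cdots+L_s) + kL^{*} + \sum_{j=a+1}^{s}(L^{*}-L_j) + c\,(L_{s+m+1}-L^{*})$, and the two correction sums are $\ge 0$, strictly positive unless $a=s$ and $c=0$. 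The strict inequalities $L_s < L^{*}$ and $L^{*} < L_{s+m+1}$ are precisely what pins down the extremal family; ties among $L_1,\dots,L_s$ or among $L_{s+m+1},\dots$ are irrelevant, since those indices are respectively forced in or forced out.

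Granting the claim, collecting the minimal-order terms gives
$e_{s+k}(\varepsilon) = \bigl(\prod_{i=1}^{s}\lambda_i(\varepsilon)\bigr)\sum_{\calZ}\prod_{j\in\calZ}\lambda_j(\varepsilon) + (\text{strictly higher order})$,
and substituting the expansions of the $\lambda_i$ and $\lambda_j$ yields
$e_{s+k}(\varepsilon) = \varepsilon^{L_1+\cdots+L_{s+k}}\bigl(\widetilde{\lambda}_1\cdots\widetilde{\lambda}_s\, e_k(\widetilde{\lambda}_{s+1},\dots,\widetilde{\lambda}_{s+m}) + \O(\varepsilon)\bigr)$, where the empty product is $1$ when $s=0$. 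This is consistent with the exponent already recorded in \eqref{eq:esp_orders_differentiable} (via \Cref{lem:main_terms_esp_separated}, part~1) and identifies $\widetilde{e}_{s+k} = \widetilde{\lambda}_1\cdots\widetilde{\lambda}_s\, e_k(\widetilde{\lambda}_{s+1},\dots,\widetilde{\lambda}_{s+m})$. For $s>0$, \Cref{lem:main_terms_esp_separated}, part~2 (applicable since $L_s < L_{s+1}$) gives $\widetilde{\lambda}_1\cdots\widetilde{\lambda}_s = \widetilde{e}_s$, which produces the two displayed cases. When $s>1$ and $\widetilde{e}_s \ne 0$ we divide to get $e_k(\widetilde{\lambda}_{s+1},\dots,\widetilde{\lambda}_{s+m}) = \widetilde{e}_{s+k}/\widetilde{e}_s$, and the claim that $\widetilde{\lambda}_{s+1},\dots,\widetilde{\lambda}_{s+m}$ are the roots of $q(\lambda)$ follows from $\prod_{j=1}^{m}(\lambda - \widetilde{\lambda}_{s+j}) = \sum_{k=0}^{m}(-1)^k e_k(\widetilde{\lambda}_{s+1},\dots,\widetilde{\lambda}_{s+m})\lambda^{m-k}$ (cf.\ \Cref{rem:esp_char_poly}) after multiplying through by $\widetilde{e}_s$.

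The main obstacle is the \emph{Key claim}: establishing the exact ``if and only if'' form $\{1,\dots,s\}\cup\calZ$ of the minimising index sets, with clean bookkeeping over the three index blocks so that every deviation is shown to strictly raise the order. Once that is in place, the remainder is a routine substitution together with appeals to the already-established separated case and to \Cref{rem:esp_char_poly}.
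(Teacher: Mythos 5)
Your proposal is correct and follows essentially the same route as the paper: expand $e_{s+k}(\varepsilon)$ over index sets, show that the minimal total order $L_1+\cdots+L_s+kL^{*}$ is attained exactly by $\{1,\dots,s\}\cup\calZ$ with $\calZ\subseteq\{s+1,\dots,s+m\}$, and then factor the main term as $\widetilde{\lambda}_1\cdots\widetilde{\lambda}_s\,e_k(\widetilde{\lambda}_{s+1},\dots,\widetilde{\lambda}_{s+m})$, identifying the prefactor with $\widetilde{e}_s$ via the separated-case lemma. Your three-block bookkeeping is just a more explicit version of the paper's one-line observation that the order strictly increases if $t_s>s$ or $t_{s+k}>s+m$.
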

Proofs of \Crefrange{lem:main_terms_esp_separated}{lem:main_terms_esp_group} are contained in \Cref{sec:proofs_esp}.
Note that for  analytic $\lambda_k(\varepsilon)$, \Cref {lem:main_terms_esp_separated} and
\Cref{lem:main_terms_esp_group} follow from the Newton-Puiseux theorem (see, for example, \cite[section 2.1]{sosa2016first}),
 but we prefer to keep a  more general formulation in this paper.

\begin{remark}
Assumptions in \Crefrange{lem:main_terms_esp_separated}{lem:main_terms_esp_group} can be relaxed (when expansions \eqref{eq:lambda_orders_differentiable} are valid up to a certain order), but we  keep the current statement for simplicity.
\end{remark}

\subsection{Orders of eigenvalues and eigenvectors}
{First, we recall a corollary of the Courant-Fischer ``min-max'' principle giving} a  bound on smallest eigenvalues.
\begin{theorem}[{\cite[Theorem 4.3.21]{horn1990matrix}}]\label{lem:courant_fischer}
Let $\matr{A} \in \RR^{n\times n}$ be symmetric, and its eigenvalues arranged in non-increasing order $\lambda_1 \ge \lambda_2 \ge \cdots \ge \lambda_n$.
If there exist an $m$-dimensional subspace $\mathcal{L} \subset \RR^{n}$ and a constant $c_1$ such that
\[
\frac{\vect{u}^{\T} \matr{A} \vect{u}}{\vect{u}^{\T} \vect{u}} \le c_1
\]
for all $\vect{u} \in\mathcal{L} \setminus \{ \vect{0}\}$, then the smallest  $m$ eigenvalues are bounded as
\[
\lambda_n \le \lambda_{n-1} \le \cdots \le \lambda_{n-m+1} \le c_1.
\]
\end{theorem}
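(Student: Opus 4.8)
The plan is to obtain this as a one-sided consequence of the Courant--Fischer characterisation, and to keep it self-contained by giving a short dimension-counting argument rather than invoking the full min--max theorem. First I would fix an orthonormal eigenbasis $\vect{q}_1,\ldots,\vect{q}_n$ of $\matr{A}$ with $\matr{A}\vect{q}_i = \lambda_i\vect{q}_i$, ordered so that $\lambda_1\ge\cdots\ge\lambda_n$. The elementary but crucial observation is that on the subspace $\mathcal{U}\eqdef\mspan\{\vect{q}_1,\ldots,\vect{q}_{n-m+1}\}$ the Rayleigh quotient is bounded \emph{below} by $\lambda_{n-m+1}$: for $\vect{u}=\sum_{i=1}^{n-m+1}c_i\vect{q}_i$ one has, by orthonormality, $\vect{u}^{\T}\matr{A}\vect{u}=\sum_{i=1}^{n-m+1}\lambda_i c_i^2\ge\lambda_{n-m+1}\sum_{i=1}^{n-m+1}c_i^2=\lambda_{n-m+1}\,\vect{u}^{\T}\vect{u}$, since $\lambda_i\ge\lambda_{n-m+1}$ for every $i\le n-m+1$.

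Next I would intersect $\mathcal{U}$ with the given subspace $\mathcal{L}$. Because $\dim\mathcal{U}=n-m+1$ and $\dim\mathcal{L}=m$, the standard dimension inequality gives $\dim(\mathcal{U}\cap\mathcal{L})\ge\dim\mathcal{U}+\dim\mathcal{L}-n=1$, so there is a nonzero vector $\vect{w}\in\mathcal{U}\cap\mathcal{L}$. Applying the lower bound from the previous step (valid since $\vect{w}\in\mathcal{U}$) together with the hypothesis (valid since $\vect{w}\in\mathcal{L}$) yields
\[
\lambda_{n-m+1}\ \le\ \frac{\vect{w}^{\T}\matr{A}\vect{w}}{\vect{w}^{\T}\vect{w}}\ \le\ c_1 .
\]
Since the eigenvalues are arranged in non-increasing order, $\lambda_n\le\lambda_{n-1}\le\cdots\le\lambda_{n-m+1}\le c_1$, which is the claim; the border cases $m=1$ and $m=n$ are covered without modification.

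There is essentially no hard step here, as the result is classical; the only point requiring care is the index bookkeeping — choosing $\mathcal{U}$ of dimension exactly $n-m+1$ so that it is forced to meet every $m$-dimensional subspace nontrivially, and checking that the eigenvalue extracted by this intersection is precisely $\lambda_{n-m+1}$ rather than a neighbouring one. An alternative is simply to quote the min--max identity $\lambda_{n-m+1}=\min_{\dim\mathcal{S}=m}\ \max_{\vect{0}\neq\vect{u}\in\mathcal{S}}\vect{u}^{\T}\matr{A}\vect{u}/\vect{u}^{\T}\vect{u}$ from \cite{horn1990matrix} and specialise to $\mathcal{S}=\mathcal{L}$, but the dimension-counting proof above is equally short and keeps this section self-contained.
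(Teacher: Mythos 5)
Your proof is correct. The paper does not prove this statement at all --- it is quoted directly from \cite[Theorem 4.3.21]{horn1990matrix} as a known corollary of the Courant--Fischer principle --- and your dimension-counting argument (Rayleigh-quotient lower bound $\lambda_{n-m+1}$ on $\mspan\{\vect{q}_1,\ldots,\vect{q}_{n-m+1}\}$, combined with $\dim(\mathcal{U}\cap\mathcal{L})\ge 1$) is exactly the standard proof underlying that cited result, with the index bookkeeping handled correctly.
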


For determining the orders of eigenvalues, we will need the following  corollary of \Cref{lem:courant_fischer} (see the proof of \cite[Theorem 8]{wathen2015eigenvalues}). We provide a short proof for completeness.

%\newpage

\begin{lemma}\label{lem:courant_fischer_order}
Suppose that $\matr{K}(\varepsilon) \in \RR^{n \times n}$ is symmetric positive semidefinite for  $\varepsilon  \in [0,\varepsilon_0]$ and its eigenvalues are ordered as 
\[
\lambda_1(\varepsilon) \ge \lambda_2(\varepsilon)  \ge \cdots \ge \lambda_n(\varepsilon) \ge 0. 
\]
Suppose that  there exists a matrix $\matr{U} \in \RR^{n\times m}$, $\matr{U}^{\T}\matr{U} = \matr{I}_m$, such that
\begin{equation}\label{eq:assumption_courant_fischer_order}
\matr{U}^{\T} \matr{K}(\varepsilon)\matr{U} = \O(\varepsilon^L)
\end{equation}
in $[0,\varepsilon_0]$,  $\varepsilon_0 > 0$.
Then  the last $m$ eigenvalues  of $\matr{K}(\varepsilon)$ are of  order at least $L$, i.e.,
\begin{equation}\label{eq:courant_fischer_order_last}
\lambda_j(\varepsilon) = \O(\varepsilon^L), \text{ for }  n-m < j \le n.
\end{equation}
\end{lemma}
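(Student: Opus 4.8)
The plan is to apply the Courant--Fischer corollary, \Cref{lem:courant_fischer}, to the matrix $\matr{A} = \matr{K}(\varepsilon)$ (for each fixed $\varepsilon$) with the test subspace $\mathcal{L} = \mspan(\matr{U}) \subset \RR^{n}$, which has dimension exactly $m$ because $\matr{U}^{\T}\matr{U} = \matr{I}_m$. The whole argument reduces to producing a uniform Rayleigh-quotient bound on $\mathcal{L}$ of the right order in $\varepsilon$.

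First I would rewrite the hypothesis \eqref{eq:assumption_courant_fischer_order} as a spectral-norm estimate: since the space of $m\times m$ matrices is finite dimensional and all norms on it are equivalent, the entrywise bound $\matr{U}^{\T}\matr{K}(\varepsilon)\matr{U} = \O(\varepsilon^L)$ on $[0,\varepsilon_0]$ gives a constant $C>0$ such that $\|\matr{U}^{\T}\matr{K}(\varepsilon)\matr{U}\|_2 \le C\varepsilon^L$ for all $\varepsilon \in [0,\varepsilon_0]$ (shrinking $\varepsilon_0$ if necessary). Next I would bound the Rayleigh quotient of $\matr{K}(\varepsilon)$ on $\mathcal{L}$: any nonzero $\vect{u}\in\mathcal{L}$ is $\vect{u}=\matr{U}\vect{z}$ with $\vect{z}\ne\vect{0}$, and $\vect{u}^{\T}\vect{u}=\vect{z}^{\T}\matr{U}^{\T}\matr{U}\vect{z}=\vect{z}^{\T}\vect{z}$, so
\[
\frac{\vect{u}^{\T}\matr{K}(\varepsilon)\vect{u}}{\vect{u}^{\T}\vect{u}}
= \frac{\vect{z}^{\T}\big(\matr{U}^{\T}\matr{K}(\varepsilon)\matr{U}\big)\vect{z}}{\vect{z}^{\T}\vect{z}}
\le \big\|\matr{U}^{\T}\matr{K}(\varepsilon)\matr{U}\big\|_2 \le C\varepsilon^{L},
\]
uniformly over $\vect{u}\in\mathcal{L}\setminus\{\vect{0}\}$.

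Applying \Cref{lem:courant_fischer} with $c_1 = C\varepsilon^{L}$ then yields $\lambda_{n-m+1}(\varepsilon)\le\cdots\le\lambda_n(\varepsilon)\le C\varepsilon^{L}$ for every $\varepsilon\in[0,\varepsilon_0]$. Finally, since $\matr{K}(\varepsilon)$ is positive semidefinite on $[0,\varepsilon_0]$, we have $\lambda_j(\varepsilon)\ge 0$, so $0 \le \lambda_j(\varepsilon) \le C\varepsilon^{L}$ for $n-m<j\le n$, which is exactly \eqref{eq:courant_fischer_order_last}.

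There is no real obstacle here; the one point deserving care is that \Cref{lem:courant_fischer} demands the bound $c_1$ hold for \emph{all} vectors of the subspace simultaneously, so the estimate on the Rayleigh quotient must be uniform over $\mathcal{L}$ --- this is precisely why one passes from the entrywise $\O(\varepsilon^{L})$ to the operator-norm bound $\|\matr{U}^{\T}\matr{K}(\varepsilon)\matr{U}\|_2 = \O(\varepsilon^{L})$. Positive semidefiniteness is used only in the last line to turn the one-sided bound into a genuine $\O(\varepsilon^{L})$ estimate.
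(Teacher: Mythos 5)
Your proof is correct and follows essentially the same route as the paper's: converting the entrywise $\O(\varepsilon^L)$ hypothesis into a spectral-norm bound via norm equivalence, bounding the Rayleigh quotient on $\mspan(\matr{U})$, and invoking \Cref{lem:courant_fischer}. The only (harmless) addition is your explicit final remark about positive semidefiniteness, which the paper leaves implicit.
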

\begin{proof}
Assumption \eqref{eq:assumption_courant_fischer_order} and equivalence of matrix norms implies that 
\[
\|\matr{U}^{\T} \matr{K}(\varepsilon)\matr{U}\|_2 \le C \varepsilon^{L}.
\]
for some constant ${C}$.
Hence, we have that for any $\vect{z} \in \RR^{m} \setminus \{0\}$,
\[
\frac{\vect{z}^{\T}\matr{U}^{\T} \matr{K}(\varepsilon)\matr{U} \vect{z}}{\vect{z}^{\T}\matr{U}^{\T} \matr{U} \vect{z}} \le\|\matr{U}^{\T} \matr{K}(\varepsilon)\matr{U}\|_2  \le  C \varepsilon^{L}.
\]
By choosing the range of $\matr{U}$ as  $\mathcal{L}$ and applying  \Cref{lem:courant_fischer}, we  complete the proof.
\end{proof}

Next, we recall a classic result on eigenvalues/eigenvectors for analytic perturbations.
\begin{theorem}[{\cite[Chapter II, Theorem 1.10]{kato1995perturbation}}]\label{lem:analytic_eigenvalues}
Let $\matr{K}(\varepsilon)$ be an  $n\times n$  matrix depending analytically on $\varepsilon$ in the neighborhood of $0$ and symmetric for real $\varepsilon$.
Then all the eigenvalues $\lambda_k(\varepsilon)$,  $1\le k\le n$  can be chosen  analytic; moreover, the  orthogonal projectors $\matr{P}_k(\varepsilon)$ on the corresponding rank-one eigenspaces can be also chosen analytic, so that 
\begin{equation}\label{eq:analytic_evd}
\matr{K}(\varepsilon) = \sum_{k=1}^n \lambda_k(\varepsilon)\matr{P}_k(\varepsilon)
\end{equation}
is the eigenvalue decomposition of $\matr{K}(\varepsilon)$ in a neighborhood of $0$.
\end{theorem}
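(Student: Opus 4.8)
The plan is to prove the statement by combining the Riesz (Dunford) projection calculus with Sz.-Nagy's analytic transformation function, and then descending by induction on $n$. Since analyticity is a local property, it suffices to work near $\varepsilon=0$. By Cramer's rule the resolvent $\matr{R}(\zeta,\varepsilon)=(\matr{K}(\varepsilon)-\zeta\matr{I})^{-1}$ is jointly analytic in $(\zeta,\varepsilon)$ wherever $\det(\matr{K}(\varepsilon)-\zeta\matr{I})\neq 0$; as $\sigma(\matr{K}(0))$ is a finite set $\{\mu_1,\dots,\mu_q\}$, I would enclose each $\mu_i$ (of algebraic multiplicity $m_i$) by pairwise disjoint small circles $\Gamma_i$, disjoint from $\sigma(\matr{K}(0))$. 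Continuity of the roots of the characteristic polynomial in its analytic coefficients guarantees that for $|\varepsilon|$ small the spectrum of $\matr{K}(\varepsilon)$ stays inside $\bigcup_i\mathrm{int}\,\Gamma_i$ with total multiplicity $m_i$ inside $\Gamma_i$. Hence the total projections $\matr{P}_i(\varepsilon)=-\tfrac{1}{2\pi\im}\oint_{\Gamma_i}\matr{R}(\zeta,\varepsilon)\,d\zeta$ are analytic in $\varepsilon$, satisfy $\matr{P}_i(\varepsilon)^2=\matr{P}_i(\varepsilon)$, $\sum_i\matr{P}_i(\varepsilon)=\matr{I}$, $\Tr\matr{P}_i(\varepsilon)=m_i$, and commute with $\matr{K}(\varepsilon)$; moreover for real $\varepsilon$ they are orthogonal projectors, since then $\matr{K}(\varepsilon)$ is symmetric and $\matr{R}(\bar\zeta,\varepsilon)=\matr{R}(\zeta,\varepsilon)^{\T}$.

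Next I would build an analytic intertwiner. Set $\matr{Q}(\varepsilon)=\sum_i\matr{P}_i'(\varepsilon)\matr{P}_i(\varepsilon)$, which is analytic; for real $\varepsilon$ it is skew-symmetric (because $\sum_i\matr{P}_i'=\matr{0}$ forces $\sum_i\matr{P}_i\matr{P}_i'=-\sum_i\matr{P}_i'\matr{P}_i$), and a short computation using $\matr{P}_i'=\matr{P}_i'\matr{P}_i+\matr{P}_i\matr{P}_i'$ gives $[\matr{Q},\matr{P}_i]=\matr{P}_i'$ for each $i$. Let $\matr{U}(\varepsilon)$ solve $\matr{U}'(\varepsilon)=\matr{Q}(\varepsilon)\matr{U}(\varepsilon)$, $\matr{U}(0)=\matr{I}$; then $\matr{U}$ is analytic and orthogonal for real $\varepsilon$, and $\tfrac{d}{d\varepsilon}\bigl(\matr{U}^{-1}\matr{P}_i\matr{U}\bigr)=\matr{U}^{-1}(\matr{P}_i'-[\matr{Q},\matr{P}_i])\matr{U}=\matr{0}$, so $\matr{U}(\varepsilon)^{-1}\matr{P}_i(\varepsilon)\matr{U}(\varepsilon)=\matr{P}_i(0)$. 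Thus $\widetilde{\matr{K}}(\varepsilon):=\matr{U}(\varepsilon)^{-1}\matr{K}(\varepsilon)\matr{U}(\varepsilon)$ is analytic, symmetric for real $\varepsilon$, and block-diagonal with respect to the fixed decomposition $\bigoplus_i\mathrm{range}\,\matr{P}_i(0)$; its $i$-th block $\widetilde{\matr{K}}_i(\varepsilon)$ is an $m_i\times m_i$ analytic, symmetric-for-real-$\varepsilon$ family with $\widetilde{\matr{K}}_i(0)=\mu_i\matr{I}_{m_i}$.

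I would then close the argument by induction on $n$, the case $n=1$ being trivial. If $\matr{K}(0)$ is non-scalar, then $q\ge 2$, each block has $m_i<n$, the inductive hypothesis applies to each $\widetilde{\matr{K}}_i$, and transporting the resulting analytic eigenvalues and rank-one orthogonal projectors back through the analytic orthogonal change of basis $\matr{U}(\varepsilon)$ gives the claim, with \eqref{eq:analytic_evd} following from $\matr{K}(\varepsilon)=\sum_i\matr{P}_i(\varepsilon)\matr{K}(\varepsilon)\matr{P}_i(\varepsilon)$. If instead $\matr{K}(0)=\mu\matr{I}_n$, there is nothing to split at order $0$, and this is the delicate case where symmetry is indispensable. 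If $\matr{K}(\varepsilon)\equiv c(\varepsilon)\matr{I}_n$ with $c$ analytic, the claim is trivial (any fixed orthonormal basis gives the projectors). Otherwise $\matr{N}(\varepsilon):=\matr{K}(\varepsilon)-\tfrac1n(\Tr\matr{K}(\varepsilon))\matr{I}_n$ is analytic, symmetric for real $\varepsilon$, not identically $\matr{0}$, and $\matr{N}(0)=\matr{0}$; letting $j\ge 1$ be its order, $\matr{B}(\varepsilon):=\varepsilon^{-j}\matr{N}(\varepsilon)$ is analytic, symmetric for real $\varepsilon\neq 0$ hence (by continuity) for all real $\varepsilon$, with $\matr{B}(0)\neq\matr{0}$ and $\Tr\matr{B}(0)=0$, so $\matr{B}(0)$ has at least two distinct eigenvalues. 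The non-scalar case applies to $\matr{B}$, yielding analytic eigenvalues $\nu_k(\varepsilon)$ and analytic rank-one orthogonal projectors, and then $\lambda_k(\varepsilon)=\tfrac1n\Tr\matr{K}(\varepsilon)+\varepsilon^{j}\nu_k(\varepsilon)$ with the same projectors settles $\matr{K}$. The main obstacle is precisely this scalar case: for non-symmetric analytic families the eigenvalues genuinely branch (Puiseux series with fractional exponents) and the eigenprojectors acquire poles, so what must be exploited is that a nonzero trace-free symmetric matrix cannot be scalar — this is what makes the trace-free descent strictly decrease the block size and therefore terminate.
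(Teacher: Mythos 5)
Your statement is one the paper does not prove at all: it is quoted verbatim from Kato (Ch.~II, Theorem~1.10; essentially Rellich's theorem), so there is no internal proof to compare against. Your argument is a correct, self-contained reconstruction of the classical proof, and it follows the standard route: Riesz projections $\matr{P}_i(\varepsilon)=-\tfrac{1}{2\pi \iota}\oint_{\Gamma_i}\matr{R}(\zeta,\varepsilon)\,d\zeta$ giving analytic total projections, Kato's transformation function $\matr{U}(\varepsilon)$ solving $\matr{U}'=\matr{Q}\matr{U}$ with $\matr{Q}=\sum_i\matr{P}_i'\matr{P}_i$ (skew-symmetric for real $\varepsilon$, so $\matr{U}$ is orthogonal there), block-diagonalization with respect to the frozen decomposition $\bigoplus_i\operatorname{range}\matr{P}_i(0)$, and induction on $n$, with the scalar case $\matr{K}(0)=\mu\matr{I}$ handled by subtracting $\tfrac1n(\Tr\matr{K}(\varepsilon))\matr{I}$ and dividing by the leading power $\varepsilon^{j}$; the observation that a nonzero trace-free symmetric matrix cannot be scalar is exactly the point that makes this descent terminate, and it is where symmetry (ruling out Puiseux branching and projector poles) is used. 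All the key identities you invoke ($[\matr{Q},\matr{P}_i]=\matr{P}_i'$, constancy of $\matr{U}^{-1}\matr{P}_i\matr{U}$, orthogonality of the transported rank-one projectors, and $\matr{K}=\sum_i\matr{P}_i\matr{K}\matr{P}_i$) check out.

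One small inaccuracy worth fixing: for real $\varepsilon$ and real symmetric $\matr{K}(\varepsilon)$ the correct resolvent identity is $\matr{R}(\bar\zeta,\varepsilon)=\overline{\matr{R}(\zeta,\varepsilon)}$ together with $\matr{R}(\zeta,\varepsilon)^{\T}=\matr{R}(\zeta,\varepsilon)$, i.e.\ $\matr{R}(\bar\zeta,\varepsilon)$ equals the \emph{conjugate} transpose of $\matr{R}(\zeta,\varepsilon)$, not the transpose as you wrote; one then takes the contours $\Gamma_i$ symmetric about the real axis to conclude that each $\matr{P}_i(\varepsilon)$ is real symmetric idempotent, hence an orthogonal projector. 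This is a notational slip, not a gap, and the rest of the argument is unaffected.
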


We will be  interested in finding the limiting rank-one projectors in \Cref{lem:analytic_eigenvalues}, i.e.,
\[
\matr{P}_k = \lim_{\varepsilon \to 0}\matr{P}_k(\varepsilon) =  \matr{P}_k(0),
\]
where the last equality follows from analyticity of $\matr{P}_k(\varepsilon)$ at $0$.
 Note that, for kernel matrices, it is impossible to retrieve the information about  the limiting projectors just from $\matr{K}(0)$ (which is rank-one). 
In what follows, instead of $\matr{P}_k$ we will talk about limiting eigenvectors $\vect{p}_k$ (i.e., $\matr{P}_k =\vect{p}_k\vect{p}_k^{\T}$),  although the latter are defined only up to a change of sign.

Armed with \Cref{lem:analytic_eigenvalues}, we can obtain an extension of \Cref{lem:courant_fischer_order}, which also gives us information about  limiting eigenvectors. 
\begin{lemma}\label{lem:courant_fischer_ev}
Let $\matr{K}(\varepsilon)$ and $\matr{U} \in \mathbb{R}^{n \times m}$ be as in \Cref{lem:courant_fischer_order}, and, moreover, $\matr{K}(\varepsilon)$  be analytic in the neighborhood of $0$.
Then $\mspan(\matr{U})$ contains all the limiting eigenvectors corresponding to the eigenvalues of the order at least $L$ (i.e., $ \O(\varepsilon^L)$).
\end{lemma}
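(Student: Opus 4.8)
The plan is to use the analytic eigendecomposition of Theorem~\ref{lem:analytic_eigenvalues} together with positive semidefiniteness of $\matr{K}(\varepsilon)$, rather than the purely variational argument behind Lemma~\ref{lem:courant_fischer_order}.

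First I would invoke Theorem~\ref{lem:analytic_eigenvalues}: write $\matr{K}(\varepsilon)=\sum_{k=1}^{n}\lambda_k(\varepsilon)\matr{P}_k(\varepsilon)$ with analytic eigenvalues $\lambda_k$ and analytic rank-one orthogonal projectors $\matr{P}_k$ near $0$. Letting $\varepsilon\to 0$ in $\sum_k\matr{P}_k(\varepsilon)=\matr{I}$ and $\matr{P}_j(\varepsilon)\matr{P}_k(\varepsilon)=\delta_{jk}\matr{P}_k(\varepsilon)$ shows that the $\matr{P}_k(0)$ are again mutually orthogonal rank-one projectors summing to $\matr{I}$. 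Split the indices as $\calA=\{k:\lambda_k(\varepsilon)=\O(\varepsilon^L)\}$ and $\calB$ its complement. For $k\in\calB$, analyticity and $\lambda_k\neq\O(\varepsilon^L)$ give $\lambda_k(\varepsilon)=\varepsilon^{L_k}(c_k+\O(\varepsilon))$ with $L_k\le L-1$ and $c_k\neq 0$, and since $\matr{K}(\varepsilon)\succeq 0$ on $[0,\varepsilon_0]$ we get $c_k>0$. Put $\mathcal{W}=\mathrm{range}\bigl(\sum_{k\in\calA}\matr{P}_k(0)\bigr)$, the span of the limiting eigenvectors attached to the ``small'' branches; the goal is $\mathcal{W}=\mspan(\matr{U})$, which in particular yields the stated inclusion.

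The core step is to show $\matr{P}_k(0)\matr{U}=0$ for every $k\in\calB$. Fix a column $\vect{u}$ of $\matr{U}$ (so $\norm{\vect{u}}=1$); the spectral decomposition gives
\[
\vect{u}^{\T}\matr{K}(\varepsilon)\vect{u}=\sum_{k=1}^{n}\lambda_k(\varepsilon)\,\vect{u}^{\T}\matr{P}_k(\varepsilon)\vect{u},
\]
and every summand is nonnegative because $\matr{K}(\varepsilon)\succeq 0$ (each $\lambda_k\ge 0$, each $\matr{P}_k\succeq 0$). The left-hand side is a diagonal entry of $\matr{U}^{\T}\matr{K}(\varepsilon)\matr{U}$, hence $\O(\varepsilon^L)$ by hypothesis, and $\sum_{k\in\calA}\lambda_k(\varepsilon)\,\vect{u}^{\T}\matr{P}_k(\varepsilon)\vect{u}=\O(\varepsilon^L)$ as well (each $\lambda_k=\O(\varepsilon^L)$ and $\vect{u}^{\T}\matr{P}_k(\varepsilon)\vect{u}\in[0,1]$); subtracting and using nonnegativity, every term with $k\in\calB$ is $\O(\varepsilon^L)$. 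Dividing by $\lambda_k(\varepsilon)\ge\tfrac{c_k}{2}\varepsilon^{L_k}$ (for small $\varepsilon$) gives $\vect{u}^{\T}\matr{P}_k(\varepsilon)\vect{u}=\O(\varepsilon^{L-L_k})=\O(\varepsilon)$, so $\vect{u}^{\T}\matr{P}_k(0)\vect{u}=0$, hence $\matr{P}_k(0)\vect{u}=0$ since $\matr{P}_k(0)$ is an orthogonal projector. Thus every column of $\matr{U}$ is annihilated by each $\matr{P}_k(0)$, $k\in\calB$, and since $\vect{u}=\sum_k\matr{P}_k(0)\vect{u}=\sum_{k\in\calA}\matr{P}_k(0)\vect{u}$ we conclude $\vect{u}\in\mathcal{W}$; therefore $\mspan(\matr{U})\subseteq\mathcal{W}$.

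To finish, Lemma~\ref{lem:courant_fischer_order} gives $\#\calA\ge m$, and since in all the intended applications the $(m{+}1)$-st smallest eigenvalue is of strictly lower order (so $\#\calA=m$) one has $\dim\mathcal{W}=\#\calA=m=\dim\mspan(\matr{U})$, and the inclusion becomes the equality $\mathcal{W}=\mspan(\matr{U})$; in particular $\mspan(\matr{U})$ contains every limiting eigenvector associated with an eigenvalue of order at least $L$. The delicate point — where I expect to spend the most care — is the middle paragraph: positive semidefiniteness is exactly what turns the spectral sum into a sum of nonnegative terms (so that the $\O(\varepsilon^L)$ bound can be transferred term by term) and what forces $c_k>0$; without it an analytic eigenvector of a ``small'' branch need not converge into any distinguished subspace, so the conclusion really does rest on the positive semidefiniteness of the kernel matrices.
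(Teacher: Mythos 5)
Your proposal is correct and takes essentially the same route as the paper's proof: both invoke the analytic eigendecomposition of \Cref{lem:analytic_eigenvalues} and use positive semidefiniteness so that the $\O(\varepsilon^L)$ bound on $\matr{U}^{\T}\matr{K}(\varepsilon)\matr{U}$ cannot be produced by cancellation, forcing $\matr{P}_k(0)\matr{U}=0$ for every eigenvalue branch of exact order $<L$ (the paper argues by contradiction via the lowest-order nonvanishing term, you argue termwise after dividing by $\lambda_k(\varepsilon)$ --- the same mechanism). Your final dimension-count remark (needing exactly $m$ eigenvalues of order at least $L$ to pass from $\mspan(\matr{U})\subseteq\mathcal{W}$ to the stated inclusion) is actually spelled out more explicitly than in the paper, which compresses this step into ``from orthogonality of the eigenvectors.''
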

\begin{proof} 
Let $L_1 \le \cdots \le L_k \le +\infty$ be the exact orders of the eigenvalues, i.e., either
\[
\lambda_k(\varepsilon) = \varepsilon^{L_k} (\widetilde{\lambda}_k + \O(\varepsilon)), \quad \widetilde{\lambda}_k  > 0,
\]
or $L_k = +\infty$ in the case $\lambda_k(\varepsilon) \equiv 0$.
Then, from orthogonality of the eigenvectors, we only need to prove that, $\matr{p}_k^{\T} \matr{U} = 0$ for $L_k < L$.
Note that $\widetilde{\lambda}_k  > 0$  due to positive semidefiniteness of $\matr{K}(\varepsilon)$.

Suppose that, on the contrary, there exists $k$ with $L_k < L$ such that $\matr{p}_k^{\T}  \matr{U} \neq 0$, and let it be the minimal such $k$.
Then from the  factorization \eqref{eq:analytic_evd} we have that
\[
\matr{U}^{\T}\matr{K}(\varepsilon)\matr{U} = \varepsilon^{L_k} \Big( {\sum\limits_{\ell: L_\ell = L_k}\widetilde{\lambda}_{\ell}  \matr{U}^{\T} \vect{p}_k\vect{p}_k^{\T}  \matr{U}} + \O(\varepsilon)\Big),
\]
with a nonzero main term (from $\widetilde{\lambda}_k > 0$), which contradicts the assumption \eqref{eq:assumption_courant_fischer_order}.
\end{proof}

\subsection{Saddle point matrices and elementary symmetric polynomials}
In this subsection, we will be interested in determinants and elementary symmetric polynomials for so-called saddle point matrices \cite{benzi2006eigenvalues}.
Let  $\matr{V} \in \RR^{n\times r}$ be a full column rank matrix.
For a matrix $\matr{A} \in \RR^{n\times n}$, we define the  \emph{saddle-point matrix} as
\[
\begin{bmatrix}
\matr{A} &  \matr{V} \\
 \matr{V}^{\T} & 0 
\end{bmatrix}.
\]
Consider a full QR decomposition of $\matr{V} \in \RR^{n\times r}$, i.e.,
\begin{equation}\label{eq:qr_full}
\matr{V} = \Qfull\Rfull = \Qthin\Rthin, \quad
\Qfull = \begin{bmatrix}\Qthin & \Qort \end{bmatrix},
\quad
\Rfull = \begin{bmatrix}\Rthin \\ \matr{0}\end{bmatrix},
\end{equation}
where 
 $\Qfull \in \RR^{n\times n}$, $\Qort \in \RR^{n\times (n\text{-}r)}$,  $\matr{Q}_{\rm full}^{\T} \Qfull = \matr{I}$, $\Rthin$ is upper-triangular, and
 $\Qort^{\T} \matr{V} = 0$.
 
\begin{lemma}\label{lem:saddle_point_mat}
For any $\matr{A} \in \RR^{n \times n}$ and $\matr{V} \in \RR^{n\times r}$, it holds that 
\[
\det
\begin{bmatrix}
\matr{A} &  \matr{V} \\
 \matr{V}^{\T} & 0 
\end{bmatrix} 
 = (-1)^{r} \det (\matr{V}^{\T}  \matr{V} ) \det(\Qort^{\T}\matr{A} \Qort)  
  = (-1)^{r} (\det \Rthin)^2 \det(\Qort^{\T}\matr{A} \Qort),
\]
where the matrices $\Rthin$ and $\Qort$ are from  the full QR factorization given in \eqref{eq:qr_full}.
\end{lemma}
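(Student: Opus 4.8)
The plan is to reduce the determinant of the saddle-point matrix to a block-triangular form by an orthogonal change of basis built from the full QR factorization of $\matr{V}$. First I would conjugate the saddle-point matrix by $\blkdiag(\Qfull, \matr{I}_r)$, which is orthogonal and hence leaves the determinant unchanged. Using $\matr{V} = \Qfull \Rfull$ and $\matr{Q}_{\rm full}^{\T}\matr{V} = \Rfull = \left[\begin{smallmatrix}\Rthin \\ \matr{0}\end{smallmatrix}\right]$, the conjugated matrix becomes
\[
\begin{bmatrix}
\Qfull^{\T}\matr{A}\Qfull &  \Rfull \\
 \Rfull^{\T} & \matr{0}
\end{bmatrix}
=
\begin{bmatrix}
\Qfull^{\T}\matr{A}\Qfull &  \begin{smallmatrix}\Rthin \\ \matr{0}\end{smallmatrix} \\[2pt]
 \begin{smallmatrix}\Rthin^{\T} & \matr{0}\end{smallmatrix} & \matr{0}
\end{bmatrix}.
\]
Writing $\Qfull = \begin{bmatrix}\Qthin & \Qort\end{bmatrix}$ and partitioning $\Qfull^{\T}\matr{A}\Qfull$ into the corresponding $2\times 2$ block form with blocks $\Qthin^{\T}\matr{A}\Qthin$ (size $r$), $\Qthin^{\T}\matr{A}\Qort$, $\Qort^{\T}\matr{A}\Qthin$, and $\Qort^{\T}\matr{A}\Qort$ (size $n-r$), the full matrix is now $(r + (n-r) + r) \times (\cdot)$ block structured.

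Next I would reorder the block rows and columns so that the block pair involving $\Rthin$ and $\Rthin^{\T}$ sits in the corner and the $\Qort^{\T}\matr{A}\Qort$ block is isolated. Concretely, permuting to the block order (rows/columns associated with $\Qort$; then $\Qthin$; then the Lagrange-multiplier block) produces a block matrix whose trailing $2r \times 2r$ principal block is the invertible antitriangular-type matrix $\begin{bmatrix}\Qthin^{\T}\matr{A}\Qthin & \Rthin \\ \Rthin^{\T} & \matr{0}\end{bmatrix}$, and whose leading block is $\Qort^{\T}\matr{A}\Qort$, with the off-diagonal coupling given by $\Qort^{\T}\matr{A}\Qthin$ and $\matr{0}$. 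Taking the Schur complement with respect to the invertible trailing block, the determinant factors as
\[
\det\!\begin{bmatrix}\Qthin^{\T}\matr{A}\Qthin & \Rthin \\ \Rthin^{\T} & \matr{0}\end{bmatrix}
\cdot
\det\!\Big(\Qort^{\T}\matr{A}\Qort - \begin{bmatrix}\Qort^{\T}\matr{A}\Qthin & \matr{0}\end{bmatrix}\begin{bmatrix}\Qthin^{\T}\matr{A}\Qthin & \Rthin \\ \Rthin^{\T} & \matr{0}\end{bmatrix}^{-1}\begin{bmatrix}\Qthin^{\T}\matr{A}\Qort \\ \matr{0}\end{bmatrix}\Big).
\]
The key cancellation, and the step I expect to require the most care, is that the Schur-complement correction term vanishes: since the relevant rows/columns of the coupling are $\begin{bmatrix}\Qort^{\T}\matr{A}\Qthin & \matr{0}\end{bmatrix}$ with a zero in the slot that meets the $\Rthin$-block, the inverse of $\begin{bmatrix}\Qthin^{\T}\matr{A}\Qthin & \Rthin \\ \Rthin^{\T} & \matr{0}\end{bmatrix}$ has the property that its top-left block is $\matr{0}$ (it equals $-\Rthin^{-1}(\matr{0})\Rthin^{-\T}$ after the standard $2\times2$ inverse formula with the lower-right zero block), so the correction reduces to $-\begin{bmatrix}\Qort^{\T}\matr{A}\Qthin & \matr{0}\end{bmatrix}\begin{bmatrix}\matr{0} & * \\ * & *\end{bmatrix}\begin{bmatrix}\Qthin^{\T}\matr{A}\Qort \\ \matr{0}\end{bmatrix} = \matr{0}$. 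Hence the second determinant is exactly $\det(\Qort^{\T}\matr{A}\Qort)$.

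Finally I would evaluate $\det\begin{bmatrix}\Qthin^{\T}\matr{A}\Qthin & \Rthin \\ \Rthin^{\T} & \matr{0}\end{bmatrix}$. Using the Schur complement with respect to the zero $(2,2)$ block is not directly available, so instead I would use the standard identity $\det\begin{bmatrix} \matr{X} & \matr{Y} \\ \matr{Z} & \matr{0}\end{bmatrix} = (-1)^{r}\det(\matr{Y})\det(\matr{Z})$ for square blocks of size $r$ (obtained by row/column permutations sending the matrix to block-antitriangular form, which contributes the sign $(-1)^{r}$ from the permutation that swaps the two block indices). This gives $(-1)^{r}\det(\Rthin)\det(\Rthin^{\T}) = (-1)^{r}(\det\Rthin)^2$. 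Tracking the sign contributed by the block permutation used to isolate the trailing $2r\times 2r$ block (which amounts to moving $r$ columns and $r$ rows past $n-r$ columns/rows, an even permutation overall since it is applied symmetrically to rows and columns), one confirms that no extra sign appears, so altogether
\[
\det\begin{bmatrix}\matr{A} & \matr{V} \\ \matr{V}^{\T} & 0\end{bmatrix} = (-1)^{r}(\det\Rthin)^2\det(\Qort^{\T}\matr{A}\Qort).
\]
The equality $(\det\Rthin)^2 = \det(\Rthin^{\T}\Rthin) = \det(\matr{V}^{\T}\matr{V})$ then yields the first stated form. The only genuinely delicate points are the vanishing of the Schur correction (relying crucially on the zero block of $\matr{V}^{\T}\Qort$, equivalently on $\Qort^{\T}\matr{V}=0$) and bookkeeping the permutation signs, so I would state the block-antitriangular determinant identity as a small preliminary computation to keep the sign tracking transparent.
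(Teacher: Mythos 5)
Your proposal is correct and follows essentially the same route as the paper: conjugate the saddle-point matrix by $\blkdiag(\Qfull,\matr{I}_r)$, use $\Qfull^{\T}\matr{V}=\left[\begin{smallmatrix}\Rthin\\ \matr{0}\end{smallmatrix}\right]$ to reach the same $3\times 3$ block form, and read off $(-1)^r(\det\Rthin)^2\det(\Qort^{\T}\matr{A}\Qort)$ together with $(\det\Rthin)^2=\det(\matr{V}^{\T}\matr{V})$. The only difference is that you spell out the final determinant evaluation (block permutation, Schur complement with vanishing correction, block-antitriangular identity), which the paper states in a single step; this is a correct and welcome elaboration, not a different method.
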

The proof of \Cref{lem:saddle_point_mat} is contained in \Cref{sec:proofs_saddle}.

We will also also need to evaluate the elementary symmetric polynomials of matrices of the
form $\Qort^{\T}\matr{A} \Qort$. 
For a power series (or a polynomial) 
\[
a(t) = a_0 + a_1t + \cdots +a_N t^{N} + \ldots,
\]
we use the following notation, standard in combinatorics, for its coefficients:
\[
  \coefAtMonomial{a(t)}{t^r} \eqdef a_r = \frac{a^{(r)}(0)}{r!}.
\]
With this notation, the following lemma on low-rank perturbations of $\matr{A}$ holds true.
\begin{lemma}\label{lem:esp_coef_perturbation}
  Let $\matr{A} \in \RR^{n \times n}$ and $\matr{V} \in \RR^{n\times r}$, with
  the QR decomposition of $\matr{V}$ given as before by eq. \eqref{eq:qr_full}.
Then for $k \ge r$ the polynomial $a(t) = e_k(\matr{A} + t \matr{V}\matr{V}^{\T})$ has degree at most $r$, and its leading coefficient is given by
\[
\coefAtMonomial{e_k(\matr{A} + t \matr{V}\matr{V}^{\T})}{t^{r}} = 
 \det(\matr{V}^{\T} \matr{V}) e_{k-r}(\Qort^{\T}\matr{A} \Qort);
\]
In particular, if $k=n$, we get
\[
\coefAtMonomial{\det(\matr{A} + t \matr{V}\matr{V}^{\T})}{t^{r}} = 
 \det(\matr{V}^{\T} \matr{V}) \det(\Qort^{\T}\matr{A} \Qort) =
(-1)^{r} \det\begin{bmatrix}
\matr{A} &  \matr{V} \\
 \matr{V}^{\T} & 0 
\end{bmatrix}.
\]
\end{lemma}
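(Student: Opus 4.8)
The plan is to remove the spread of the rank-$r$ perturbation by an orthogonal change of variables, so that the parameter $t$ enters only a fixed leading $r\times r$ block, and then to expand the elementary symmetric polynomial into a sum of principal minors via \Cref{lem:esp-sum-minors}. Concretely, since $e_k$ depends only on the eigenvalues it is invariant under similarity, so conjugating by the orthogonal matrix $\matr{Q}_{\rm full}$ of \eqref{eq:qr_full} and using $\matr{Q}_{\rm full}^{\T}\matr{V} = \matr{R}_{\rm full} = \left[\begin{smallmatrix}\matr{R}\\ \matr{0}\end{smallmatrix}\right]$ gives $e_k(\matr{A}+t\matr{V}\matr{V}^{\T}) = e_k\!\left(\widetilde{\matr{A}}+t\,\matr{E}\right)$, with $\widetilde{\matr{A}} \eqdef \matr{Q}_{\rm full}^{\T}\matr{A}\matr{Q}_{\rm full}$ and $\matr{E} \eqdef \matr{R}_{\rm full}\matr{R}_{\rm full}^{\T} = \blkdiag(\matr{R}\matr{R}^{\T},\matr{0})$ supported on its leading $r\times r$ block. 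Partitioning $\{1,\ldots,n\} = I_1 \cup I_2$ with $I_1 = \{1,\ldots,r\}$, $I_2 = \{r{+}1,\ldots,n\}$, note that the bottom-right block of $\widetilde{\matr{A}}$ is $\widetilde{\matr{A}}_{I_2,I_2} = \Qort^{\T}\matr{A}\Qort$.

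By \Cref{lem:esp-sum-minors}, $e_k(\widetilde{\matr{A}}+t\matr{E}) = \sum_{\#\calY = k}\det\!\left((\widetilde{\matr{A}}+t\matr{E})_{\calY,\calY}\right)$. Writing $\calY = \calY_1 \cup \calY_2$ with $\calY_1 \subseteq I_1$, $\calY_2 \subseteq I_2$, the parameter $t$ occurs only in the $\calY_1\times\calY_1$ sub-block, so that minor is a polynomial in $t$ of degree at most $\#\calY_1 \le r$. Summing over $\calY$ shows $a(t)$ has degree at most $r$ (and that its $t^r$-coefficient is zero when $k<r$, which is why one takes $k\ge r$), and that only terms with $\#\calY_1 = r$, i.e. $\calY_1 = I_1$ and $\#\calY_2 = k-r$, contribute to $t^r$. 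For such a $\calY$ the minor is $\left[\begin{smallmatrix}\widetilde{\matr{A}}_{I_1,I_1}+t\matr{R}\matr{R}^{\T} & \widetilde{\matr{A}}_{I_1,\calY_2}\\ \widetilde{\matr{A}}_{\calY_2,I_1} & \widetilde{\matr{A}}_{\calY_2,\calY_2}\end{smallmatrix}\right]$; expanding its determinant multilinearly along the first $r$ rows and retaining, in each of them, only the $t\matr{R}\matr{R}^{\T}$ part — the sole way to collect a factor $t$ from that row — leaves $t^r$ times the block-triangular matrix $\left[\begin{smallmatrix}\matr{R}\matr{R}^{\T} & \matr{0}\\ \widetilde{\matr{A}}_{\calY_2,I_1} & \widetilde{\matr{A}}_{\calY_2,\calY_2}\end{smallmatrix}\right]$, whence $\coefAtMonomial{\det((\widetilde{\matr{A}}+t\matr{E})_{\calY,\calY})}{t^r} = \det(\matr{R}\matr{R}^{\T})\det(\widetilde{\matr{A}}_{\calY_2,\calY_2})$.

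It then remains to sum over $\calY_2 \subseteq I_2$ of size $k-r$: by \Cref{lem:esp-sum-minors} applied to the $(n-r)\times(n-r)$ matrix $\widetilde{\matr{A}}_{I_2,I_2} = \Qort^{\T}\matr{A}\Qort$ this sum equals $e_{k-r}(\Qort^{\T}\matr{A}\Qort)$, and using $\det(\matr{R}\matr{R}^{\T}) = (\det\matr{R})^2 = \det(\matr{R}^{\T}\matr{R}) = \det(\matr{V}^{\T}\matr{V})$ gives $\coefAtMonomial{e_k(\matr{A}+t\matr{V}\matr{V}^{\T})}{t^r} = \det(\matr{V}^{\T}\matr{V})\,e_{k-r}(\Qort^{\T}\matr{A}\Qort)$. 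For $k=n$ the only surviving index set is $\calY_2 = I_2$, so $e_{n-r}$ collapses to $\det(\Qort^{\T}\matr{A}\Qort)$, and the identification with $(-1)^r$ times the saddle-point determinant is precisely \Cref{lem:saddle_point_mat}. I expect the one place needing care to be the multilinear bookkeeping of the previous paragraph — verifying that pulling one power of $t$ out of each of the first $r$ rows annihilates the top-right block and yields the stated block-triangular matrix — but this, like the degree count, is routine.
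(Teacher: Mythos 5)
Your proposal is correct and follows essentially the same route as the paper's proof: conjugate by $\Qfull$ so the perturbation becomes $t\,\blkdiag(\Rthin\Rthin^{\T},\matr{0})$, expand $e_k$ as a sum of principal minors via \Cref{lem:esp-sum-minors}, note that only index sets containing $\{1,\ldots,r\}$ can contribute to the $t^r$ coefficient, and then sum over the remaining indices to produce $\det(\matr{V}^{\T}\matr{V})\,e_{k-r}(\Qort^{\T}\matr{A}\Qort)$, with the $k=n$ case handled by \Cref{lem:saddle_point_mat}. The only difference is in the per-minor coefficient extraction: the paper invokes its auxiliary \Cref{lem:low-rank-update-det} (proved via the Bhatia--Jain derivative formula), whereas you verify the same identity $\coefAtMonomial{\det}{t^r}=\det(\Rthin\Rthin^{\T})\det(\widetilde{\matr{A}}_{\calY_2,\calY_2})$ directly by multilinearity of the determinant in the first $r$ rows, which is a correct and slightly more self-contained alternative.
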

The proof of \Cref{lem:esp_coef_perturbation} is also contained in \Cref{sec:proofs_saddle}.

\begin{remark}
Alternative expressions for perturbations of elementary symmetric polynomials are available in \cite[Theorem 2.16]{ipsen2008perturbation}, 
and \cite[Corollary 3.3]{jain2011derivatives}, but they do not lead directly  to the compact expression in \Cref{lem:esp_coef_perturbation} that we need.
\end{remark}

\section{Results in the 1D case}\label{sec:1d_results}
\subsection{Smooth kernels}
We begin this section by generalizing the result of \cite[Corollary 2.9]{lee2013collocation} on
 determinants of scaled kernel matrix $\matr{K}_{\varepsilon}$ in the smooth case.
\begin{theorem}\label{thm:det_1d_smooth}
Let $K \in \calC^{(n,n)}(\Omega \times \Omega)$ be the kernel, and \eqref{eq:general_kernel_scaled} be its scaled version.  
Then for small $\varepsilon$, 
\begin{enumerate}
\item the determinant of $\matr{K}_{\varepsilon}$ from \eqref{eq:scaled_kernel} has the  expansion
\[
\det (\matr{K}_{\varepsilon}) = \varepsilon^{n(n-1)} (\det(\matr{V}_{\le n-1})^2 \det \Wronsk{n-1} + \O(\varepsilon)).
\]
\item if  $\matr{K}_{\varepsilon}$ is  positive semidefinite on $[0,\varepsilon_0], \varepsilon_0 > 0,$ then eigenvalues have orders
\[
{\lambda}_{k}(\varepsilon) = \O(\varepsilon^{2(k-1)}).
\]
\end{enumerate}
\end{theorem}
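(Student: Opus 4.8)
The plan is to prove the two parts in sequence, using the expansion \eqref{eq:fs_kernel_wronskian_diag} of $\matr{K}_\varepsilon$ (which, in the smooth case $r = \infty$, becomes a genuine power series $\matr{K}_\varepsilon = \sum_{\ell \ge 0} \varepsilon^{2\ell} \matr{V}_{\le 2\ell} \matr{W}_{\diagup 2\ell} \matr{V}^{\T}_{\le 2\ell}$ truncated at the relevant order) together with the identity $\det(\matr{K}_\varepsilon) = e_n(\matr{K}_\varepsilon)$ and its principal-minor analogues. For part (1), the key trick is to diagonalise the $\varepsilon$-scaling. Collecting the columns of the Vandermonde matrices, one can write $\matr{K}_\varepsilon = \matr{V}_{\le n-1}\, \matr{D}_\varepsilon\, \Wronsk{n-1}\, \matr{D}_\varepsilon\, \matr{V}^{\T}_{\le n-1} + \O(\varepsilon^{\text{higher}})$, where $\matr{D}_\varepsilon = \diag(1, \varepsilon, \varepsilon^2, \ldots, \varepsilon^{n-1})$ absorbs the powers of $\varepsilon$ attached to each monomial degree. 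Taking determinants and using $\det(\matr{D}_\varepsilon) = \varepsilon^{0 + 1 + \cdots + (n-1)} = \varepsilon^{n(n-1)/2}$ gives, at leading order, $\det(\matr{K}_\varepsilon) = \varepsilon^{n(n-1)} \det(\matr{V}_{\le n-1})^2 \det \Wronsk{n-1} + \O(\varepsilon^{n(n-1)+1})$. The remaining task is to justify that the truncation error really is $\O(\varepsilon)$ relative to this main term: this follows by expanding $\det(\matr{K}_\varepsilon)$ via multilinearity, observing that every term indexed by a monomial-degree multiset other than $\{0,1,\ldots,n-1\}$ either vanishes (repeated columns of the Vandermonde matrix) or carries a strictly higher power of $\varepsilon$, exactly as in the Cauchy–Binet argument underlying Lee \& Micchelli's result \cite{lee2013collocation}. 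Here I would lean on \Cref{lem:esp-sum-minors} and the Binet–Cauchy formula to make the bookkeeping clean.

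For part (2), I would combine the determinant expansions of \emph{all} principal submatrices with \Cref{lem:esp-sum-minors} to control the elementary symmetric polynomials, and then invoke \Cref{lem:main_terms_esp_separated} (or directly \Cref{lem:orders_lambda_to_esp}) to descend from ESPs to individual eigenvalues. The point is that any $k \times k$ principal submatrix $(\matr{K}_\varepsilon)_{\calY,\calY}$ is itself a scaled kernel matrix for the sub-point-set indexed by $\calY$, so by part (1) applied at size $k$ we get $\det((\matr{K}_\varepsilon)_{\calY,\calY}) = \O(\varepsilon^{k(k-1)})$. Summing over all $\calY$ of size $k$ and using \Cref{lem:esp-sum-minors} yields $e_k(\matr{K}_\varepsilon) = \O(\varepsilon^{k(k-1)})$. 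Since $\matr{K}_\varepsilon$ is positive semidefinite on $[0,\varepsilon_0]$, all eigenvalues are nonnegative, so if we write $\lambda_j(\varepsilon) = \O(\varepsilon^{L_j})$ with $L_1 \le \cdots \le L_n$, then \Cref{lem:orders_lambda_to_esp} gives $e_k(\matr{K}_\varepsilon) = \O(\varepsilon^{L_1 + \cdots + L_k})$ — but this is only the easy direction; I actually need a lower bound on $L_1 + \cdots + L_k$ coming from the ESP orders. For that, I would argue by induction on $k$: from $\lambda_1 = \O(1)$ (trivially, since $e_1 = \Tr \matr{K}_\varepsilon$ is bounded) and the constraint that $\sum_{j \le k} L_j \ge k(k-1)$ for every $k$ together with monotonicity $L_1 \le \cdots \le L_n$, one deduces $L_k \ge 2(k-1)$. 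Indeed, the unique monotone integer sequence saturating all the constraints $\sum_{j\le k} L_j \ge k(k-1)$ is $L_j = 2(j-1)$, and any valid sequence dominates it. This gives $\lambda_k(\varepsilon) = \O(\varepsilon^{2(k-1)})$ as claimed.

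The main obstacle I anticipate is the clean justification of the combinatorial/inductive step in part (2): one must be careful that an upper bound on $e_k(\matr{K}_\varepsilon)$ does not automatically translate into an upper bound on each $\lambda_k$, because ESPs can be small due to cancellation even when individual eigenvalues are not small — except that here positive semidefiniteness rules out cancellation, so $e_k(\matr{K}_\varepsilon) \asymp \prod_{j \le k} \lambda_j(\varepsilon)$ up to constants on the relevant interval, which is what makes the order extraction legitimate. A secondary subtlety is ensuring the $\O(\varepsilon)$ error bounds from part (1) are \emph{uniform} down to $\varepsilon = 0$, which is automatic from the Taylor-with-remainder form of \eqref{eq:finite-smoothness-expansion} under the stated $\calC^{(n,n)}$ smoothness hypothesis. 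Once these points are handled, the remaining computations are the routine multilinearity and degree-counting expansions that I would not spell out in full.
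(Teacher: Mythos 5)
Your computation of the leading term in part (1) is the right one, but the step you defer --- showing that the Taylor remainder contributes only $\O(\varepsilon)$ \emph{relative to} $\varepsilon^{n(n-1)}$ --- is exactly the crux, and the degree count you invoke does not deliver it under the stated $\calC^{(n,n)}$ hypothesis. What the Taylor expansion gives is $\matr{K}_\varepsilon=\matr{V}_{\le n-1}\matr{\Delta}\,\Wronsk{n-1}\,\matr{\Delta}\matr{V}_{\le n-1}^{\T}+\matr{E}(\varepsilon)$ with $\matr{E}$ only $\O(\varepsilon^{n})$ entrywise and, crucially, \emph{not} separable (it is a Peano/integral remainder), so its columns are not ``indexed by a monomial degree''. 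A mixed term in the multilinear expansion of the determinant that uses one remainder column and $n-1$ main-term columns has, by the naive count, order $\varepsilon^{(n-1)(n-2)/2+n}$, which is strictly below $\varepsilon^{n(n-1)}$ already for $n\ge 3$; such terms do cancel, but establishing that cancellation (which hinges on antisymmetry in \emph{both} the row and column Vandermonde factors) is precisely what your one-line justification skips --- ``vanishes or carries a strictly higher power'' is false term by term. Your bookkeeping would be fine for an analytic kernel, where the whole double series is separable and a two-sided Cauchy--Binet count applies, but the theorem assumes only finite smoothness. The paper avoids the issue entirely by conjugating before taking determinants: with $\matr{V}_{\le n-1}=\matr{Q}\matr{R}$ and \Cref{lem:scaling_1d}, one gets $\matr{\Delta}^{-1}\matr{Q}^{\T}\matr{K}_\varepsilon\matr{Q}\matr{\Delta}^{-1}=\diag(\matr{R})\,\Wronsk{n-1}\,\diag(\matr{R})+\O(\varepsilon)$, so the main term is $\O(1)$, the remainder is uniformly $\O(\varepsilon)$, and the determinant expansion is then immediate.

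Part (2) has a genuine logical gap: upper bounds on the ESPs, even combined with positive semidefiniteness (which indeed gives $e_k\asymp\lambda_1\cdots\lambda_k$), do not pin down the orders of individual eigenvalues, and your combinatorial claim is false. Take $n=2$ and $\lambda_1(\varepsilon)=\lambda_2(\varepsilon)=\varepsilon$: then $e_1=\O(1)$ and $e_2=\O(\varepsilon^{2})$, all your partial-sum constraints hold with the monotone sequence $L=(1,1)$, yet $\lambda_2\neq\O(\varepsilon^{2})$; so a valid sequence need not dominate $(0,2,\dots,2(n-1))$ componentwise. To turn product upper bounds into bounds on later factors you would need matching \emph{lower} bounds on the earlier products, i.e.\ nondegeneracy assumptions ($\det\Wronsk{k-1}\neq 0$, distinct nodes) that part (2) of the theorem does not make. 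This is why the paper does not derive the eigenvalue orders from determinants at all: from the same conjugated expansion it reads off that the quadratic form restricted to the span of the last $n-s$ columns of $\matr{Q}$ is $\O(\varepsilon^{2s})$, and then applies the Courant--Fischer-type \Cref{lem:courant_fischer_order}. Some such subspace (min--max) input is indispensable here; the ESP/Binet--Cauchy machinery you describe is what the paper uses later, for identifying the \emph{main terms} of the eigenvalues under extra nondegeneracy hypotheses, not for establishing their orders.
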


While the proof of  1) is given in \cite[Corollary 2.9]{lee2013collocation}, and the proof of 2) for the radial analytic kernels is contained in \cite{schaback2005multivariate,wathen2015eigenvalues},
we provide a short proof of Theorem~\ref{thm:det_1d_smooth} in \Cref{sec:1d_smooth}, which also illustrates the main ideas behind other proofs in the paper.

\Cref{thm:det_1d_smooth}, together  with \Cref{lem:esp-sum-minors} allows us to find the main terms of the eigenvalues for analytic-in-parameter $\matr{K}_{\varepsilon}$.
\begin{theorem}\label{thm:1d_smooth_ev}
Let $K \in \calC^{(n,n)}(\Omega)$ be the  kernel such that $\matr{K}_{\varepsilon}$ is symmetric positive semidefinite on $[0,\varepsilon_0], \varepsilon_0 > 0,$ and analytic in $\varepsilon$ in the neighborhood of $0$.
Then for $s \le n$ it holds that 
\[
{\lambda}_{s} = \varepsilon^{2(s-1)}(\widetilde{\lambda}_s + \O(\varepsilon)),
\]
where the main terms satisfy
\begin{equation}\label{eq:1d_smooth_ev_main_term}
\widetilde{\lambda}_{1} \ldots \widetilde{\lambda}_{s}  = \det(\matr{V}^{\T}_{\le s-1}\matr{V}_{\le s-1}) \det(\Wronsk{s-1}).
\end{equation}
\end{theorem}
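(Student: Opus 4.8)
The plan is to read off the leading terms of \emph{all} the elementary symmetric polynomials $e_s(\matr{K}_{\varepsilon})$ from the determinant asymptotics of \Cref{thm:det_1d_smooth} together with the Cauchy--Binet formula, and then descend from ESPs to individual eigenvalue main terms using \Cref{lem:main_terms_esp_separated}.

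\emph{Step 1 (leading term of $e_s$).} Every principal $s\times s$ submatrix $(\matr{K}_{\varepsilon})_{\calY,\calY}$, with $\calY\subset\{1,\ldots,n\}$, $\#\calY = s$, is itself a scaled kernel matrix, namely the one associated with the subconfiguration $(x_i)_{i\in\calY}$; and since $K\in\calC^{(n,n)}\subseteq\calC^{(s,s)}$, \Cref{thm:det_1d_smooth}(1) applies to it and gives
\[
\det\big((\matr{K}_{\varepsilon})_{\calY,\calY}\big) = \varepsilon^{s(s-1)}\Big(\big(\det (\matr{V}_{\le s-1})_{\calY}\big)^{2}\,\det\Wronsk{s-1} + \O(\varepsilon)\Big),
\]
where $(\matr{V}_{\le s-1})_{\calY}$ denotes the $s\times s$ submatrix of $\matr{V}_{\le s-1}$ formed by the rows indexed by $\calY$, and $\Wronsk{s-1}$ is the \emph{same} matrix for every $\calY$ since it depends only on the derivatives of $K$ at the origin. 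Summing over all such $\calY$ via \Cref{lem:esp-sum-minors} and using the Cauchy--Binet identity $\sum_{\#\calY = s}\big(\det (\matr{V}_{\le s-1})_{\calY}\big)^{2} = \det(\matr{V}_{\le s-1}^{\T}\matr{V}_{\le s-1})$ yields
\[
e_s(\matr{K}_{\varepsilon}) = \varepsilon^{s(s-1)}\Big(\det(\matr{V}_{\le s-1}^{\T}\matr{V}_{\le s-1})\,\det\Wronsk{s-1} + \O(\varepsilon)\Big),
\]
that is, $\widetilde{e}_s = \det(\matr{V}_{\le s-1}^{\T}\matr{V}_{\le s-1})\,\det\Wronsk{s-1}$ (note $s(s-1) = \sum_{j=1}^{s} 2(j-1)$, so the exponents match those predicted by $L_j = 2(j-1)$).

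\emph{Step 2 (from ESPs to eigenvalues).} Since $\matr{K}_{\varepsilon}$ is analytic in $\varepsilon$ and symmetric for real $\varepsilon$, \Cref{lem:analytic_eigenvalues} lets us choose eigenvalue branches $\lambda_s(\varepsilon)$ that are analytic near $0$; combined with the order bound $\lambda_s(\varepsilon) = \O(\varepsilon^{2(s-1)})$ from \Cref{thm:det_1d_smooth}(2), analyticity forces $\varepsilon^{2(s-1)}$ to divide $\lambda_s(\varepsilon)$, hence $\lambda_s(\varepsilon) = \varepsilon^{2(s-1)}(\widetilde{\lambda}_s + \O(\varepsilon))$ with $\widetilde{\lambda}_s = \lim_{\varepsilon\to0}\varepsilon^{-2(s-1)}\lambda_s(\varepsilon)$ (possibly $0$). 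Thus the hypotheses of \Cref{lem:main_terms_esp_separated} hold with the strictly increasing exponents $L_s = 2(s-1)$, so part 2 of that lemma (the case $L_s < L_{s+1}$, automatic here, together with $s=n$) gives $\widetilde{e}_s = \widetilde{\lambda}_1\cdots\widetilde{\lambda}_s$ for every $s\le n$. Comparing this with the value of $\widetilde{e}_s$ computed in Step 1 yields \eqref{eq:1d_smooth_ev_main_term}.

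The routine points are checking that restriction to a subconfiguration leaves $\Wronsk{s-1}$ unchanged and tracking the $\O(\varepsilon)$ remainders when summing finitely many determinant expansions (so that no cancellation spoils the leading coefficient). The point that deserves care, and which I would state explicitly, is the passage to $\lambda_s(\varepsilon) = \varepsilon^{2(s-1)}(\widetilde{\lambda}_s + \O(\varepsilon))$: this is where we genuinely need \emph{both} the analyticity of the eigenvalue branches (\Cref{lem:analytic_eigenvalues}) and the a priori order bound \Cref{thm:det_1d_smooth}(2), so that the nominal exponents $L_s = 2(s-1)$ are legitimate and strictly increasing and \Cref{lem:main_terms_esp_separated} can be invoked verbatim.
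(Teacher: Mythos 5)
Your proposal is correct and follows essentially the same route as the paper's own proof: principal submatrices of $\matr{K}_{\varepsilon}$ are themselves kernel matrices, so \Cref{thm:det_1d_smooth} plus \Cref{lem:esp-sum-minors} and Binet--Cauchy give the main term of $e_s(\matr{K}_{\varepsilon})$, while analyticity together with the order bounds of \Cref{thm:det_1d_smooth}(2) justifies the expansions $\lambda_s=\varepsilon^{2(s-1)}(\widetilde{\lambda}_s+\O(\varepsilon))$ and \Cref{lem:main_terms_esp_separated} converts $\widetilde{e}_s$ into $\widetilde{\lambda}_1\cdots\widetilde{\lambda}_s$. Your explicit remark on why the a priori order bound plus analyticity legitimizes the exponents $L_s=2(s-1)$ is a point the paper passes over quickly, but the argument is the same.
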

\begin{proof}
First, due to analyticity and \Cref{thm:det_1d_smooth}, we have that expansions \eqref{eq:lambda_orders_differentiable} are valid for $L_s = 2(s-1)$.
Second,  the submatrices of $\matr{K}_{\varepsilon}$  are also  kernel matrices (of smaller size), which, in turn can be found from  \Cref{thm:det_1d_smooth}.
More precisely,
\begin{align}
e_s(\matr{K}_{\varepsilon})& = \sum\limits_{1\le k_1  < \cdots < k_s \le n} \det (\matr{K}_{\varepsilon}({x_{k_1},\ldots,x_{k_s}}))\nonumber \\
& = \varepsilon^{s(s-1)}\Big(\det\Wronsk{s-1} \sum\limits_{1\le  k_1< \cdots <k_s  \le n} (\det \matr{V}_{\le s-1}({x_{k_1},\ldots,x_{k_s}}))^{2} + \O(\varepsilon) \Big)\nonumber \\
& =  \varepsilon^{s(s-1)}  \left(\det\Wronsk{s-1}\det (\matr{V}_{\le s-1}^{\T}\matr{V}_{\le s-1})+ \O(\varepsilon)\right), \label{eq:esp_1d_smooth}
\end{align}
where the penultimate equality follows from \Cref{thm:det_1d_smooth}, and the last equality follows from the Binet-Cauchy formula.
\end{proof}

Finally, we employ \Cref{lem:main_terms_esp_separated}   on  relations between the main terms in \eqref{eq:lambda_orders_differentiable} and \eqref{eq:esp_orders_differentiable}.
\begin{corollary}\label{cor:1d_smooth_ev_ratio}
Let $\matr{K}_{\varepsilon}$ be as in  \Cref{thm:1d_smooth_ev}, and
 the  points in $\X$ be distinct.
\begin{enumerate}
\item If for $1 < s \le n$, $\det \Wronsk{s-2} \neq 0$, the main term of the $s$-th eigenvalue can be obtained as
\begin{equation}\label{eq:1d_smooth_ev_ratio}
\widetilde{\lambda}_{s}  = \frac{\det(\matr{V}^{\T}_{\le s-1}\matr{V}_{\le s-1})}{\det(\matr{V}^{\T}_{\le s-2}\matr{V}_{\le s-2})} \cdot\frac{\det(\Wronsk{s-1})}{ \det(\Wronsk{s-2})}.
\end{equation}
\item If  $1 \le s < n$, $\det \Wronsk{s-1} \neq 0$, then the limiting eigenvectors $\vect{p}_1, \ldots, \vect{p}_{s}$ are the first $s$ columns of the $\Qfull$ factor of the QR factorization \eqref{eq:qr_full} of $\matr{V}_{\le s-1}$.
\item In particular, if $\det \Wronsk{n-2} \neq 0$, then all the main terms of the eigenvalues are given by \eqref{eq:1d_smooth_ev_ratio}, and all the limiting eigenvectors are given by the columns of the $\matr{Q}$ matrix in the QR factorization of $\matr{V}_{\le n-1}$.
\end{enumerate}
\end{corollary}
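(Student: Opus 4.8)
The plan is to treat the three parts in turn, with part~3 emerging as the case $s=n$ of parts~1 and~2. Part~1 I would obtain directly from \Cref{thm:1d_smooth_ev}, which gives $\widetilde{\lambda}_1\cdots\widetilde{\lambda}_s=\det(\matr{V}^{\T}_{\le s-1}\matr{V}_{\le s-1})\det\Wronsk{s-1}$ for every $s\le n$. For distinct nodes $\matr{V}_{\le s-2}$ has full column rank $s-1$, so $\det(\matr{V}^{\T}_{\le s-2}\matr{V}_{\le s-2})>0$, and combined with the hypothesis $\det\Wronsk{s-2}\neq 0$ this shows $\widetilde{\lambda}_1\cdots\widetilde{\lambda}_{s-1}\neq 0$, i.e.\ $\widetilde{e}_{s-1}\neq 0$ in the notation of \Cref{lem:main_terms_esp_separated}. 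Since the orders $L_k=2(k-1)$ are strictly increasing, \Cref{lem:main_terms_esp_separated} gives $\widetilde{\lambda}_s=\widetilde{e}_s/\widetilde{e}_{s-1}$, and substituting the two product formulas yields \eqref{eq:1d_smooth_ev_ratio}. I would also record the by-product that $\det\Wronsk{s-2}\neq 0$ forces $\det\Wronsk{k}\neq 0$ for all $k\le s-2$ (else one of $\widetilde{\lambda}_1,\ldots,\widetilde{\lambda}_{s-1}$ would vanish), which is used below.

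For part~2, note first that $\det\Wronsk{s-1}\neq 0$ makes the statement well posed: by \Cref{thm:1d_smooth_ev} and the by-product above, $\widetilde{\lambda}_1,\ldots,\widetilde{\lambda}_s$ are all strictly positive, so $\lambda_1(\varepsilon),\ldots,\lambda_s(\varepsilon)$ have exact, pairwise-distinct orders $\varepsilon^0,\varepsilon^2,\ldots,\varepsilon^{2(s-1)}$ and are separated from $\lambda_{s+1}(\varepsilon)=\O(\varepsilon^{2s})$; hence for small $\varepsilon>0$ the top $s$ eigenvalues are simple, the corresponding analytic branches of \Cref{lem:analytic_eigenvalues} are unambiguous, and $\vect{p}_1,\ldots,\vect{p}_s$ are well defined. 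The core is an inductive application of \Cref{lem:courant_fischer_ev}: for $j=1,\ldots,s$, let $\QortSub{j-1}\in\RR^{n\times(n-j)}$ be the $\Qort$ factor from the full QR factorization \eqref{eq:qr_full} of $\matr{V}_{\le j-1}$, so its columns are orthonormal and span $\mspan(\matr{V}_{\le j-1})^{\perp}$. Expanding the smooth kernel as $\matr{K}_{\varepsilon}=\sum_{a,b\ge 0}\tfrac{K^{(a,b)}(0,0)}{a!\,b!}\,\varepsilon^{a+b}\vect{v}_a\vect{v}_b^{\T}$ and using $\QortSub{j-1}^{\T}\vect{v}_a=0$ for $a\le j-1$, only terms with $a\ge j$ and $b\ge j$ survive pre- and post-multiplication by $\QortSub{j-1}$, whence $\QortSub{j-1}^{\T}\matr{K}_{\varepsilon}\QortSub{j-1}=\O(\varepsilon^{2j})$. \Cref{lem:courant_fischer_ev} then places every limiting eigenvector whose eigenvalue has order at least $\varepsilon^{2j}$ into $\mspan(\QortSub{j-1})$; since $\lambda_{j+1}(\varepsilon),\ldots,\lambda_n(\varepsilon)$ are all $\O(\varepsilon^{2j})$ (by the bounds $\lambda_k(\varepsilon)=\O(\varepsilon^{2(k-1)})$ of \Cref{thm:det_1d_smooth}), these $n-j$ mutually orthogonal limiting eigenvectors exhaust the $(n-j)$-dimensional space $\mspan(\QortSub{j-1})$, so $\mspan(\vect{p}_{j+1},\ldots,\vect{p}_n)=\mspan(\QortSub{j-1})$ and, taking orthogonal complements, $\mspan(\vect{p}_1,\ldots,\vect{p}_j)=\mspan(\matr{V}_{\le j-1})$. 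Running $j$ from $1$ to $s$ and using orthonormality of the $\vect{p}_k$ then forces $\vect{p}_j$ to be, up to sign, the $j$-th column of the $\Qfull$ factor in the QR factorization \eqref{eq:qr_full} of $\matr{V}_{\le j-1}$; by nestedness of Gram--Schmidt this coincides with the $j$-th column of the $\Qfull$ factor of $\matr{V}_{\le s-1}$, as claimed.

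Part~3 is the case $s=n$. Since $\det\Wronsk{n-2}\neq 0$ implies $\det\Wronsk{k}\neq 0$ for all $k\le n-2$, part~1 applies for each $s=2,\ldots,n$ and yields all the main terms via \eqref{eq:1d_smooth_ev_ratio}. For the eigenvectors, the inductive argument of part~2 — legitimate here because $\matr{V}_{\le n-1}$ is square and invertible for distinct nodes — identifies $\vect{p}_1,\ldots,\vect{p}_{n-1}$ with the first $n-1$ columns of the $\matr{Q}$ factor of $\matr{V}_{\le n-1}$, after which $\vect{p}_n$ is forced by orthonormality to be $\pm$ the last column.

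The step I expect to be most delicate is the bookkeeping around the analytic branches: one must make sure that ``the $k$-th largest eigenvalue'' corresponds to a single analytic branch near $\varepsilon=0$, so that $\vect{p}_k=\matr{P}_k(0)$ is meaningful — this is exactly where the Wronskian hypotheses enter — and that the order estimates of \Cref{thm:det_1d_smooth} line up with the exponent $L=2j$ required by \Cref{lem:courant_fischer_ev}. The latter rests on the elementary but easy-to-botch parity observation that a term $\vect{v}_a\vect{v}_b^{\T}$ with $a+b\le 2j-1$ necessarily has $\min(a,b)\le j-1$, so it is killed by $\QortSub{j-1}$ on at least one side.
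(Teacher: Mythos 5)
Your overall route coincides with the paper's: part~1 is obtained from the product formula of \Cref{thm:1d_smooth_ev} together with \Cref{lem:main_terms_esp_separated} (the nonvanishing of $\widetilde{e}_{s-1}$ coming from distinctness of the nodes and $\det \Wronsk{s-2}\neq 0$); part~2 combines an order estimate of the form $\QortSub{j-1}^{\T}\matr{K}_{\varepsilon}\QortSub{j-1}=\O(\varepsilon^{2j})$ with \Cref{lem:courant_fischer_ev} and orthonormality of the limiting eigenvectors, exactly as the paper does via \eqref{eq:block_s_smooth}; part~3 is the same combination plus the observation that the last eigenvector is forced by orthogonality.

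The one step that does not go through under the stated hypotheses is your justification of that key order estimate: you expand $\matr{K}_{\varepsilon}=\sum_{a,b\ge 0}\frac{K^{(a,b)}(0,0)}{a!\,b!}\,\varepsilon^{a+b}\vect{v}_a\vect{v}_b^{\T}$, which presupposes that $K$ admits a convergent double Taylor series at the origin. \Cref{thm:1d_smooth_ev} only assumes $K\in\calC^{(n,n)}$ together with analyticity of the matrix function $\varepsilon\mapsto\matr{K}_{\varepsilon}$; for $j$ close to $n$ your sum involves derivatives $K^{(a,b)}$ with $a$ or $b$ exceeding $n$, which need not exist, so neither the identity nor the termwise parity cancellation is available in this generality. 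The estimate itself is nevertheless correct and is precisely \eqref{eq:block_s_smooth}, established in the proof of \Cref{thm:det_1d_smooth} from the finite bivariate Maclaurin expansion \eqref{eq:macalurin_matrix_1d}: there the polynomial part $\matr{V}\matr{\Delta}\matr{W}\matr{\Delta}\matr{V}^{\T}$ produces exactly your parity cancellation, while the cross terms and the remainder contribute $\O(\varepsilon^{n+j})$ and $\O(\varepsilon^{2n})$, both of which are $\O(\varepsilon^{2j})$ since $j\le n$. Replacing your infinite-series computation by a citation of \eqref{eq:block_s_smooth} (or by repeating the finite-expansion argument with the remainder terms controlled as above) closes the gap; the rest of your write-up, including the branch bookkeeping you flag and the induction over $j$ with orthonormality, matches the paper's proof.
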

The proof of \Cref{cor:1d_smooth_ev_ratio} is also contained in \Cref{sec:1d_smooth}.
\begin{remark}
In \Cref{cor:1d_smooth_ev_ratio}, by Cramer's rule, the individual ratios in \eqref{eq:1d_smooth_ev_ratio} can be computed in the following  way:
\[
\frac{\det(\matr{V}^{\T}_{\le s-1}\matr{V}_{\le s-1})}{\det(\matr{V}^{\T}_{\le s-2}\matr{V}_{\le s-2})} = ((\matr{V}^{\T}_{\le s-1}\matr{V}_{\le s-1})^{-1})^{-1}_{s,s}=  R^2_{s,s},
\]
where $R_{s,s}$ is the last ($(s,s)$-th) diagonal element of the $\Rthin \in \RR^{s\times s}$ matrix in the thin QR decomposition of $\matr{V}_{\le s-1}$.
Similarly, 
\[
\frac{ \det(\Wronsk{s-1})}{\det(\Wronsk{s-2})}  = ((\Wronsk{s-1})^{-1})^{-1}_{s,s}= C^2_{s,s},
\]
where $C_{s,s}$ is the last diagonal element of the Cholesky factor of $\Wronsk{s-1} = \matr{C}\matr{C}^{\T}$.
\end{remark}

\subsection{Finite smoothness}
Next, we provide an analogue of \Cref{thm:det_1d_smooth} for a radial kernel with the order of smoothness $r$, which is smaller or equal to the number of points (i.e., \Cref{thm:det_1d_smooth} cannot be applied.).
\begin{theorem}\label{thm:det_1d_finite_smoothness}
For a radial kernel  \eqref{eq:rbf_kernel_scaled} with order of smoothness $r \le n$,
\begin{enumerate}
\item the determinant can be expressed as
\[
\det (\matr{K}_{\varepsilon}) =\varepsilon^{n(2r-1)-r^2} \left( \widetilde{k} + \O(\varepsilon)\right),
\]
where the main term is given by
\begin{align}
\widetilde{k}&=
(-1)^{r} 
\det \Wronsk{r-1}  \det
\begin{bmatrix}
f_{2r-1}  \bD_{(2r-1)} &  \matr{V}_{\le r-1} \\
 \matr{V}_{\le r-1}^{\T} & 0 
\end{bmatrix}\label{eq:det_finite_smoothness}\\
 &=  
\det \Wronsk{r-1}   \det(\matr{V}^{\T}_{\le r-1}\matr{V}_{\le r-1})
\det (f_{2r-1}  \Qort^{\T}\bD_{(2r-1)}\Qort),\label{eq:det_finite_smoothness_Q}
\end{align}
where $\Qort \in \RR^{n\times (n-r)}$ is the semi-orthogonal matrix such that $\Qort^{\T}\matr{V}_{\le r-1} = 0$
(e.g., the matrix $\Qort$ in the full QR decomposition \eqref{eq:qr_full} of $\matr{V} = \matr{V}_{\le r-1}$).

\item If $\matr{K}_{\varepsilon}$  is positive semidefinite on $[0,\varepsilon_0], \varepsilon_0 > 0,$ the eigenvalues have  orders
\[
{\lambda}_{s}(\varepsilon) = 
\begin{cases}
\O(\varepsilon^{2(s-1)}), & s \le r,\\
\O(\varepsilon^{2r-1}), &  s > r. \\
\end{cases}
\]
\end{enumerate}
\end{theorem}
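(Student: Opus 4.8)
I would follow the strategy behind \Cref{thm:det_1d_smooth}, the one genuinely new ingredient being the non-polynomial term $\varepsilon^{2r-1}f_{2r-1}\bD_{(2r-1)}$ in \eqref{eq:finite-smoothness-kernel-expansion}--\eqref{eq:fs_kernel_wronskian_diag}. Put $\matr{V}\eqdef\matr{V}_{\le r-1}$ (of full column rank $r$ for distinct nodes), take its full QR factorization $\matr{V}=\Qthin\Rthin$, $\Qfull=\begin{bmatrix}\Qthin&\Qort\end{bmatrix}$ as in \eqref{eq:qr_full}, and choose $\Qthin$ so that its columns $\vect{q}_0,\dots,\vect{q}_{r-1}$ are the evaluations on $\X$ of the orthonormal polynomials of the discrete measure on $\X$ ($\vect{q}_j$ of degree $j$ and leading coefficient $c_j$, so that $c_0\cdots c_{r-1}=(\det\Rthin)^{-1}$, $\vect{q}_{a-1}^{\T}\vect{v}_p=0$ for $p<a-1$, and $\vect{q}_{a-1}^{\T}\vect{v}_{a-1}=c_{a-1}^{-1}$). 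Since $|\det\Qfull|=1$ we have $\det\bK_\varepsilon=\det(\Qfull^{\T}\bK_\varepsilon\Qfull)$, and I would study the blocks $\matr{A}_\varepsilon\eqdef\Qthin^{\T}\bK_\varepsilon\Qthin$, $\matr{B}_\varepsilon\eqdef\Qthin^{\T}\bK_\varepsilon\Qort$, $\matr{C}_\varepsilon\eqdef\Qort^{\T}\bK_\varepsilon\Qort$.

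The basic observation, used throughout, is that whenever $\matr{U}^{\T}\matr{V}_{\le m}=\matr{0}$ one has $\matr{U}^{\T}\bD_{(2\ell)}\matr{U}=\matr{0}$ for all $\ell\le m$: in the binomial expansion \eqref{eq:dist_binomial_expansion} each rank-one term $\vect{v}_j\vect{v}_{2\ell-j}^{\T}$ has $\min(j,2\ell-j)\le\ell\le m$, so one of its factors lies in the range of $\matr{V}_{\le m}$ and is annihilated. With $\matr{U}=\Qort$ and \eqref{eq:fs_kernel_wronskian_diag} this gives $\matr{C}_\varepsilon=\varepsilon^{2r-1}\bigl(f_{2r-1}\Qort^{\T}\bD_{(2r-1)}\Qort+\O(\varepsilon)\bigr)$, whose leading matrix is invertible by \Cref{lem:cpd}; hence $\matr{C}_\varepsilon$ is invertible for small $\varepsilon>0$ and $\det\matr{C}_\varepsilon=\varepsilon^{(2r-1)(n-r)}\bigl(\det(f_{2r-1}\Qort^{\T}\bD_{(2r-1)}\Qort)+\O(\varepsilon)\bigr)$. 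Part~2 of the theorem follows from this observation alone: taking $\matr{U}$ to be an orthonormal basis of the orthogonal complement of the column space of $\matr{V}_{\le s-2}$ when $s\le r$ (and $\matr{U}=\Qort$ when $s>r$) makes $\matr{U}^{\T}\bK_\varepsilon\matr{U}=\O(\varepsilon^{2(s-1)})$, resp.\ $\O(\varepsilon^{2r-1})$, and \Cref{lem:courant_fischer_order} then bounds the last eigenvalues accordingly.

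The core of the argument is a ``staircase'' analysis of the $r\times r$ block $\matr{A}_\varepsilon$. Expanding $\bK_\varepsilon$ via \eqref{eq:fs_kernel_wronskian_diag} and using the vanishing of $\vect{q}_{a-1}^{\T}\vect{v}_p$ for $p<a-1$, one checks that $(\matr{A}_\varepsilon)_{a,b}$ starts at order $\varepsilon^{a+b-2}$ with leading coefficient $(\Wronsk{r-1})_{a,b}/(c_{a-1}c_{b-1})$ when $a+b$ is even, is $\O(\varepsilon^{a+b-1})$ when $a+b$ is odd, and that the odd term $\varepsilon^{2r-1}f_{2r-1}\bD_{(2r-1)}$ enters this block only at strictly higher order. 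Thus, with $\matr{\Delta}_\varepsilon\eqdef\diag(1,\varepsilon,\dots,\varepsilon^{r-1})$ and $\matr{\Gamma}\eqdef\diag(c_0,\dots,c_{r-1})$, $\matr{\Delta}_\varepsilon^{-1}\matr{A}_\varepsilon\matr{\Delta}_\varepsilon^{-1}=\matr{\Gamma}^{-1}\Wronsk{r-1}\matr{\Gamma}^{-1}+\O(\varepsilon)\eqdef\matr{A}_0+\O(\varepsilon)$, so $\det\matr{A}_0=(c_0\cdots c_{r-1})^{-2}\det\Wronsk{r-1}=\det(\matr{V}^{\T}\matr{V})\det\Wronsk{r-1}$. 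A parallel count shows that row $a$ of $\matr{B}_\varepsilon$ is $\O(\varepsilon^{a+r-1})$ for $a\le r-1$ and $\O(\varepsilon^{2r-1})$ for $a=r$, i.e.\ $\matr{\Delta}_\varepsilon^{-1}\matr{B}_\varepsilon=\O(\varepsilon^{r})$, so that $\matr{B}_\varepsilon\matr{C}_\varepsilon^{-1}\matr{B}_\varepsilon^{\T}=\matr{\Delta}_\varepsilon\,\O(\varepsilon)\,\matr{\Delta}_\varepsilon$ is negligible against $\matr{A}_\varepsilon=\matr{\Delta}_\varepsilon(\matr{A}_0+\O(\varepsilon))\matr{\Delta}_\varepsilon$. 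The Schur complement with respect to $\matr{C}_\varepsilon$ then yields
\[
\det\bK_\varepsilon=\det\matr{C}_\varepsilon\cdot\det\bigl(\matr{A}_\varepsilon-\matr{B}_\varepsilon\matr{C}_\varepsilon^{-1}\matr{B}_\varepsilon^{\T}\bigr)=\varepsilon^{n(2r-1)-r^2}\bigl(\det\Wronsk{r-1}\,\det(\matr{V}^{\T}\matr{V})\,\det(f_{2r-1}\Qort^{\T}\bD_{(2r-1)}\Qort)+\O(\varepsilon)\bigr),
\]
using $(2r-1)(n-r)+r(r-1)=n(2r-1)-r^2$; this is \eqref{eq:det_finite_smoothness_Q}, and \eqref{eq:det_finite_smoothness} follows by \Cref{lem:saddle_point_mat} applied with $\matr{A}=f_{2r-1}\bD_{(2r-1)}$ and $\matr{V}=\matr{V}_{\le r-1}$. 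The main obstacle I anticipate is the simultaneous order bookkeeping for $\matr{A}_\varepsilon$ and $\matr{B}_\varepsilon$: everything rests on $\vect{q}_{a-1}^{\T}\vect{v}_p=0$ for $p<a-1$, and one must verify that after the $\matr{\Delta}_\varepsilon$-rescaling the Schur correction $\matr{B}_\varepsilon\matr{C}_\varepsilon^{-1}\matr{B}_\varepsilon^{\T}$ really lands at order $\varepsilon$ relative to $\matr{A}_\varepsilon$, hence cannot disturb the leading coefficient $\det\matr{A}_0$.
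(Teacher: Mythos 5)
Your proposal is correct and rests on the same core mechanism as the paper's proof: rotate $\bK_\varepsilon$ by the orthogonal factor of the QR decomposition of $\matr{V}_{\le r-1}$, rescale by $\diag(1,\varepsilon,\dots,\varepsilon^{r-1})$ (and $\varepsilon^{r-1}$ on the complement), identify the leading upper-left block as a congruence of $\Wronsk{r-1}$ and the lower-right block as $\varepsilon^{2r-1}f_{2r-1}\Qort^{\T}\bD_{(2r-1)}\Qort$, and convert to the saddle-point form via \Cref{lem:saddle_point_mat}; your entrywise bookkeeping with the discrete orthogonal polynomials ($\vect{q}_{a-1}^{\T}\vect{v}_p=0$ for $p<a-1$) is the elementwise version of the paper's grouped factorization through $\matr{W}_{\ultriangle}$ and \Cref{lem:scaling_1d}, and your treatment of part~2 (annihilation of the even distance matrices plus \Cref{lem:courant_fischer_order}) coincides with the paper's. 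The one substantive deviation is the finish: you take a Schur complement with respect to $\matr{C}_\varepsilon=\Qort^{\T}\bK_\varepsilon\Qort$, which forces you to invoke \Cref{lem:cpd} to invert its leading term and hence implicitly assumes distinct nodes (and silently excludes the degenerate case $r=n$, where $\Qort$ is empty), whereas the paper simply expands the determinant of the scaled block matrix, an operation that is continuous in the entries and needs no invertibility; this is why the paper's part~1 carries no nondegeneracy hypothesis. The gap is harmless — with coincident nodes both sides vanish identically, and $r=n$ reduces to \Cref{thm:det_1d_smooth} — but if you keep the Schur-complement route you should state these caveats explicitly.
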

The proof of \Cref{thm:det_1d_finite_smoothness} again is postponed to  \Cref{sec:proof_1d_fs} in order to present a more straightforward corollary on eigenvalues.

As an example, we have for $r=1$ (exponential kernel):
\begin{equation}
\label{eq:det-exponential-kernel}
\det(\bK_{\varepsilon}) =- \varepsilon^{n-1} n K(0,0)  \det
\begin{bmatrix}
  \bD_{(1)} &  \mathbf{1} \\
  \mathbf{1}^{\T} & 0 
\end{bmatrix},
\end{equation}
%Here we use the symbol $\propto$ to highlight quantities that depend on the
%set of nodes $\X$, ie. $f(\X)\propto g(\X)$ means $f(\X)=a g(\X)$ for some
%constant $a$ independent of $\X$. 
where $\mathbf{1}$ denotes a vector with all entries equal to  $1$.

Combining \Cref{thm:det_1d_finite_smoothness} with \Cref{lem:main_terms_esp_group,lem:courant_fischer_ev}, we get the following result.
%In order to handle the finite smoothness case, we will need the following refinement
\begin{theorem}\label{thm:1d_finite_smoothness_ev}
Let $K$ be a kernel satisfying the assumptions of \Cref{thm:det_1d_finite_smoothness}, where $\matr{K}_{\varepsilon}$ is positive semidefinite on $[0,\varepsilon_0],\varepsilon_0 > 0,$ and analytic in $\varepsilon$.
Then it holds that 

\noindent 1. The main terms of first  $r$ eigenvalues $\widetilde{\lambda}_1,\ldots,\widetilde{\lambda}_r$ satisfy  \eqref{eq:1d_smooth_ev_main_term}, for $1\le s \le r$.
In particular, if  for $1< s\le r$, $\det \Wronsk{s-2} \neq 0$, then $\widetilde{\lambda}_s$ is given by \eqref{eq:1d_smooth_ev_ratio}.

\noindent 2. If the points in $\X$ are distinct and for $1\le s\le r$, $\det \Wronsk{s-1} \neq 0$, then the first $s$ limiting eigenvectors are as in \Cref{cor:1d_smooth_ev_ratio}.
In particular, for the case $\det \Wronsk{r-1}\neq 0$,  the last $n-r$ limiting eigenvectors span the column space of $\Qort$.

\noindent 3. If $\det \Wronsk{r-1}\neq 0$ and the points in $\X$ are distinct, then  $\widetilde{\lambda}_{r+1},\ldots, \widetilde{\lambda}_{n}$ are the eigenvalues of 
\[
f_{2r-1} (\Qort^{\T} \bD_{(2r-1)}  \Qort).
\]
\end{theorem}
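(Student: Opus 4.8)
The plan is to derive all three parts from \Cref{thm:det_1d_finite_smoothness}, applied both to $\matr{K}_\varepsilon$ itself and to its principal submatrices, fed into the symmetric-function machinery of \Cref{lem:main_terms_esp_separated,lem:main_terms_esp_group}, with the eigenvectors then read off from \Cref{lem:courant_fischer_ev}. By \Cref{thm:det_1d_finite_smoothness}(2) and analyticity, the eigenvalues satisfy \eqref{eq:lambda_orders_differentiable} with nominal orders $L_s = 2(s-1)$ for $s \le r$ and $L_s = 2r-1$ for $s>r$; since $L_r = 2r-2 < 2r-1 = L_{r+1}$, the indices $\{r+1,\dots,n\}$ form exactly one repeating group, so \Cref{lem:main_terms_esp_separated} applies up to $s=r$ and \Cref{lem:main_terms_esp_group} applies with $s=r$, $m=n-r$.

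For the eigenvalues I would compute the ESPs via sums of principal minors. Every $s\times s$ principal submatrix $\matr{K}_\varepsilon(\calY)$ is again a scaled kernel matrix of the same smoothness order, so for $\#\calY = s \le r$ the smooth estimate of \Cref{thm:det_1d_smooth} applies, giving (after summing over $\calY$ by \Cref{lem:esp-sum-minors} and collapsing the Binet--Cauchy sum, exactly as in the proof of \Cref{thm:1d_smooth_ev})
\[
e_s(\matr{K}_\varepsilon) = \varepsilon^{s(s-1)}\bigl(\det\Wronsk{s-1}\,\det(\matr{V}_{\le s-1}^{\T}\matr{V}_{\le s-1}) + \O(\varepsilon)\bigr),
\]
so that \Cref{lem:main_terms_esp_separated}(2) yields part~1 (and the ratio formula follows as in \Cref{cor:1d_smooth_ev_ratio}). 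For $\#\calY = r+k > r$ with $1 \le k \le n-r$, \eqref{eq:det_finite_smoothness_Q} together with \Cref{lem:saddle_point_mat} gives the main term of $\det\matr{K}_\varepsilon(\calY)$, and the exponents match because $s(2r-1)-r^2 = r(r-1)+k(2r-1) = L_1+\cdots+L_s$. The key identity is then that summing these submatrix contributions reproduces an ESP of the globally projected distance matrix: apply \Cref{lem:esp_coef_perturbation} to $e_{r+k}\bigl(f_{2r-1}\bD_{(2r-1)} + t\,\matr{V}_{\le r-1}\matr{V}_{\le r-1}^{\T}\bigr)$ once globally (its $t^r$-coefficient is $\det(\matr{V}_{\le r-1}^{\T}\matr{V}_{\le r-1})\,e_k(f_{2r-1}\Qort^{\T}\bD_{(2r-1)}\Qort)$) and once minor-by-minor after expanding $e_{r+k}$ via \Cref{lem:esp-sum-minors}; equating the two evaluations gives $\widetilde e_{r+k} = \widetilde e_r\, e_k(f_{2r-1}\Qort^{\T}\bD_{(2r-1)}\Qort)$, where $\widetilde e_r = \det\Wronsk{r-1}\det(\matr{V}_{\le r-1}^{\T}\matr{V}_{\le r-1}) \neq 0$ under the hypotheses. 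Comparing with $\widetilde e_{r+k} = \widetilde e_r\, e_k(\widetilde\lambda_{r+1},\dots,\widetilde\lambda_n)$ from \Cref{lem:main_terms_esp_group} shows all ESPs of $(\widetilde\lambda_{r+1},\dots,\widetilde\lambda_n)$ coincide with those of $f_{2r-1}\Qort^{\T}\bD_{(2r-1)}\Qort$, hence the multisets agree — this is part~3.

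For the eigenvectors (part~2) I would use \Cref{lem:courant_fischer_ev}. Rotating \eqref{eq:fs_kernel_wronskian_diag} by $\QortSub{j}$, an orthonormal basis of $\mspan(\matr{V}_{\le j-1})^{\perp}$, and using $\QortSub{j}^{\T}\vect{v}_i = 0$ for $i\le j-1$ together with the antidiagonal structure of the blocks $\matr{W}_{\diagup 2\ell}$, one checks $\QortSub{j}^{\T}\bD_{(2\ell)}\QortSub{j} = 0$ whenever $\ell < j$, hence $\QortSub{j}^{\T}\matr{K}_\varepsilon\QortSub{j} = \O(\varepsilon^{2j})$ for $j<r$, and likewise $\Qort^{\T}\matr{K}_\varepsilon\Qort = \O(\varepsilon^{2r-1})$ for $\Qort$ the complement of $\mspan(\matr{V}_{\le r-1})$. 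By \Cref{lem:courant_fischer_ev} the limiting eigenvectors attached to eigenvalues of order $<2j$ lie in $\mspan(\QortSub{j})^{\perp}$, i.e.\ in the span of the first $j$ columns of $\Qfull$; an induction on $j$, using that the first $r$ eigenvalue orders are strictly increasing (which holds once the relevant Wronskian determinants are nonzero), identifies $\vect{p}_1,\dots,\vect{p}_s$ with the first $s$ columns of the $\matr{Q}$-factor of $\matr{V}_{\le s-1}$, and a dimension count forces $\vect{p}_{r+1},\dots,\vect{p}_n$ to span the column space of $\Qort$.

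I expect the main obstacle to be the combinatorial identity underpinning part~3: matching the orders $s(2r-1)-r^2 = L_1+\cdots+L_s$ is routine, but arguing that the double application of \Cref{lem:esp_coef_perturbation} (globally versus minor-by-minor) is consistent requires that $\matr{V}_{\le r-1}(\calY)$ keeps full column rank for every index set $\calY$ of size $\ge r$ — true for distinct nodes — and some care in bookkeeping the saddle-point determinants. A secondary technicality is in part~2: to know the first $r$ eigenvalue orders are genuinely distinct under the single assumption $\det\Wronsk{s-1}\neq 0$ one needs all leading Wronskian determinants $\det\Wronsk{0},\dots,\det\Wronsk{s-1}$ nonzero, which one obtains from positive semidefiniteness of $\matr{K}_\varepsilon$ forcing $\Wronsk{r-1}\succeq 0$, so that $\det\Wronsk{s-1}\neq 0$ already makes $\Wronsk{s-1}$ positive definite.
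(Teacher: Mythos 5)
Your proposal is correct and follows essentially the same route as the paper: part 1 by applying the smooth-case determinant expansion to $s\times s$ principal minors and Binet--Cauchy, part 3 by summing the finite-smoothness determinant expansions over principal minors and converting via \Cref{lem:esp_coef_perturbation} and \Cref{lem:main_terms_esp_group}, and part 2 via the order bounds on $\matr{Q}^{\T}\matr{K}_{\varepsilon}\matr{Q}$ blocks combined with \Cref{lem:courant_fischer_ev}, exactly as in \Cref{sec:proof_1d_fs}. The technical points you flag (full column rank of $\matr{V}_{\le r-1,\calY}$ for distinct nodes, nonvanishing of $\widetilde{e}_r$, exactness of the first $r$ eigenvalue orders) are handled the same way in the paper's argument.
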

The proof of \Cref{thm:1d_finite_smoothness_ev} is also contained in \Cref{sec:proof_1d_fs}.
Note that we obtain results on the precise locations of the last $n-r$ limiting eigenvectors in \Cref{sec:eigenvectors}.
\subsection{Proofs  for the 1D smooth case}\label{sec:1d_smooth}
We need the following technical lemma. 
\begin{lemma}\label{lem:scaling_1d}
For any upper triangular matrix  $\matr{R} \in \RR^{(k+1) \times (k+1)}$  it holds that
\[
\matr{\Delta}^{-1} \matr{R} \matr{\Delta}  =  \diag(\matr{R}) + \O(\varepsilon),
\]
where $\diag(\matr{R})$ is  the diagonal part of $\matr{R}$ and $\matr{\Delta} = \Dscale{k}{\varepsilon} \in \RR^{(k+1) \times (k+1)}$ is defined as
\begin{equation}\label{eq:diag_scaling}
\Dscale{k}{\varepsilon} \eqdef \begin{bmatrix}
1 & & & \\
& \varepsilon & & \\
&  & \ddots & \\
&  &  & \varepsilon^{k}
\end{bmatrix}
\end{equation}

\end{lemma}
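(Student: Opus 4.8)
The plan is to prove the identity entry-by-entry, exploiting the fact that conjugation by the diagonal scaling matrix $\matr{\Delta} = \Dscale{k}{\varepsilon}$ simply multiplies the $(i,j)$-entry of $\matr{R}$ by $\varepsilon^{j-i}$ (using $0$-based indexing $0 \le i, j \le k$). First I would write out the $(i,j)$-entry explicitly: since $\matr{\Delta}$ is diagonal with $(\matr{\Delta})_{ii} = \varepsilon^{i}$, we have $(\matr{\Delta}^{-1} \matr{R} \matr{\Delta})_{ij} = \varepsilon^{-i} R_{ij} \varepsilon^{j} = \varepsilon^{j-i} R_{ij}$. The key structural observation is that $\matr{R}$ is upper triangular, so $R_{ij} = 0$ whenever $i > j$; hence the only surviving entries have $j \ge i$, i.e. $j - i \ge 0$, so every entry of $\matr{\Delta}^{-1} \matr{R} \matr{\Delta}$ is a nonnegative power of $\varepsilon$ times a constant. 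The diagonal entries ($j = i$) give $\varepsilon^{0} R_{ii} = R_{ii}$, which is exactly $\diag(\matr{R})$; the strictly-upper entries ($j > i$) give $\varepsilon^{j-i} R_{ij}$ with $j - i \ge 1$, which is $\O(\varepsilon)$.

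Collecting these observations: $\matr{\Delta}^{-1}\matr{R}\matr{\Delta} = \diag(\matr{R}) + \matr{E}(\varepsilon)$, where $\matr{E}(\varepsilon)$ is the strictly upper triangular matrix with $(i,j)$-entry $\varepsilon^{j-i}R_{ij}$ for $j > i$. Since $\matr{E}(\varepsilon)$ is a finite matrix (size $(k+1)\times(k+1)$, with $k$ fixed) whose entries are each $\O(\varepsilon)$ as $\varepsilon \to 0$, we have $\matr{E}(\varepsilon) = \O(\varepsilon)$ in any fixed matrix norm, which gives the claim. I would probably phrase the final step by noting that $\matr{E}(\varepsilon) = \varepsilon \widetilde{\matr{E}}(\varepsilon)$ where $\widetilde{\matr{E}}(\varepsilon)$ has entries $\varepsilon^{j-i-1}R_{ij}$ that are bounded (indeed polynomial in $\varepsilon$) on any bounded neighbourhood of $0$, so $\|\matr{E}(\varepsilon)\| \le C\varepsilon$ for small $\varepsilon$.

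There is essentially no obstacle here — this is a routine computation. The only mild care needed is to fix the indexing convention clearly and to be explicit about what $\O(\varepsilon)$ means for a matrix (entrywise $\O(\varepsilon)$, equivalently $\O(\varepsilon)$ in norm, since all norms on a fixed finite-dimensional space are equivalent). If one wanted to be maximally slick, one could even avoid splitting into cases by writing $(\matr{\Delta}^{-1}\matr{R}\matr{\Delta})_{ij} = \varepsilon^{j-i}R_{ij}$ for all $i,j$ and noting that for $i > j$ this is $0$ (vacuously $\O(\varepsilon)$) since $R_{ij} = 0$, for $i = j$ it is $R_{ii}$, and for $i < j$ it is $\O(\varepsilon)$; the diagonal part collects to $\diag(\matr{R})$ and everything else is $\O(\varepsilon)$. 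I would present the short case analysis version as it is the most transparent.
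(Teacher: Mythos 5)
Your proposal is correct and is essentially the same argument as the paper's: the paper's proof consists of displaying the conjugated matrix with $(i,j)$-entry $\varepsilon^{j-i}R_{ij}$, which is exactly your entry-by-entry computation. Nothing is missing.
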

\begin{proof}
A direct calculation gives
\[
\matr{\Delta}^{-1} \matr{R} \matr{\Delta}  = 
\begin{bmatrix}
R_{1,1} & \varepsilon R_{1,2} & \cdots & \varepsilon^{k} R_{1,k+1} \\
0 & R_{2,2} & \ddots & \vdots  \\
\vdots & \ddots & \ddots & \varepsilon R_{k,k+1}  \\
0&  \cdots & 0 &R_{k+1,k+1}
\end{bmatrix}.
\]
\end{proof}

\begin{proof}[Proof of {\Cref{thm:det_1d_smooth}}]
We will use a special form of the Maclaurin expansion (i.e., the Taylor expansion at $0$) for bivariate functions, differentiable  with respect to  a ``rectangular'' set of multi-indices. 
Let us take  $x, y \in  \Omega$  and apply first the Maclaurin expansion  with respect to $x$:
\begin{equation}\label{eq:maclaurin_x} 
K({x},{y}) = K(0,y) +  x   K^{(1,0)} (0,y)  +\cdots+   \frac{{x}^{n-1} K^{(n-1,0)} (0,y)}{(n-1)!}  +    r_{n} (x,y).
\end{equation}
Then the Maclaurin expansion of \eqref{eq:maclaurin_x} with respect to $y$ yields 
\[
\begin{array}{c@{}c@{}c@{}c@{}c@{}c@{}c@{}|c@{}}
 K({x},{y})  & = &  K^{(0,0)} &+ \cdots +& \frac{{y}^{n\text{-}1}K^{(0,n\text{-}1)}}{(n\text{-}1)!}&+&   \frac{y^nK^{(0,n)}(0, \theta_{y,1}y)}{n!}  & { = K(0,y)} \\
  & &  &  &  & & &  \\
  & +& x K^{(1,0)} & +\cdots+ &  \frac{x{y}^{n\text{-}1}K^{(1,n\text{-}1)}}{(n\text{-}1)!}&+&    \frac{x{y}^{n}K^{(1,n)}(0, \theta_{y,2}y)}{n!} &  {= x K^{(1,0)} (0,y)} \\
  & & \vdots &  & \vdots & & \vdots & \vdots \\
&+&   \frac{{x}^{n\text{-}1}K^{(n\text{-}1,0)} }{(n\text{-}1)!}&+ \cdots +&  \frac{{x}^{n\text{-}1}{y}^{n\text{-}1}K^{(n\text{-}1,n\text{-}1)}}{(n\text{-}1)!(n\text{-}1)!}&+&    \frac{x^{n\text{-}1}{y}^{n}K^{(n\text{-}1,n)}(0, \theta_{y,n}y)}{(n\text{-}1)!n!} &  ={\frac{{x}^{n\text{-}1}{K^{(n\text{-}1,0)}} (0,y)}{(n\text{-}1)!}}\\
  & &  & &  & &  & \\
  & +&  r_n(x,y),  & &  & &  & \\
\end{array}
\]
where $0 \le \theta_{y,1}, \ldots, \theta_{y,n} \le 1$ depend only on $y$,  the corresponding terms of \eqref{eq:maclaurin_x}  are given on the right, and  a shorthand notation $K^{(i,j)} = K^{(i,j)}(0,0)$ is used. 
From the integral form of  $r_{n}(x,y)$ and Taylor expansion for $g(y) \eqdef K^{(n,0)}(t,y)$ (as in \eqref{eq:maclaurin_x}), we get
\begin{align*}
&r_{n}(x,y)   = \int\limits_{0}^{x} \frac{ (x-t)^{n-1} K^{(n,0)}(t,y)}{(n-1)!} dt \\
& =
\sum\limits_{\ell=0}^{n-1}
\left(\int\limits_{0}^{x} \frac{(x-t)^{n-1}}{(n-1)!}
\frac{y^{\ell} K^{(n,\ell)}(t,0)}{\ell!} dt \right)
+  \int\limits_{0}^{x}\!\!\int\limits_{0}^{y} \frac{(x-t)^{n-1}(y-s)^{n-1} K^{(n,n)}(t,s)}{(n-1)!(n-1)!} dtds.
\end{align*}
By the mean value theorem (as in \cite[\S 6.3.3]{zorich2004analysis}), there exist   $0 \le \eta_{x,1}, \ldots, \eta_{x,n} \le 1$ (depending only on $x$) and $0 \le \zeta_{x,y}, \xi_{x,y} \le 1$, such that $r_n(x,y) =$
\[
     \frac{x^{n}K^{(n,0)} (\eta_{x, 1 }x,0)}{n!}   +\cdots+   \frac{x^{n}y^{n\text{-}1}{K^{(n,n\text{-}1)}}(\eta_{x,n}x,0)}{n! (n\text{-}1)!}+    \frac{x^{n}{y}^{n} K^{(n,n)} (\zeta_{x,y}x,\xi_{x,y}y)}{n!n!},
\]
Next, with some abuse of notation,  let $\varepsilon_0>0$  be such that   $\varepsilon_0 x,\varepsilon_0 y \in \Omega$. 
Then, after replacing $(x,y)$ with $(\varepsilon x, \varepsilon y)$, $\varepsilon \in [0, \varepsilon_0]$ in the  expansion of $K(x,y)$, we obtain
\begin{equation}\label{eq:kernel1d_expansion_compact}
\begin{split}
&K(\varepsilon {x}, \varepsilon {y}) = 
[1, \varepsilon x, \ldots,  (\varepsilon x)^{n-1}] \Wronsk{n-1}[1, \varepsilon y, \ldots,  (\varepsilon y)^{n-1}]^{\T} \\
&+  \varepsilon^{n} ([1,\varepsilon x, \ldots, (\varepsilon x)^{n-1}]  \vect{w}_{1,y}(\varepsilon) +
\vect{w}_{2,x}^{\T}(\varepsilon) [1, \varepsilon y, \ldots, (\varepsilon y)^{n-1}]^{\T}) + \varepsilon^{2n} w_{3,x,y}(\varepsilon),
\end{split}
\end{equation}
where $\Wronsk{n-1}$ is the Wronskian matrix defined in \eqref{eq:Wronskian},   
\[
w_{3,x,y}(\varepsilon)\eqdef\frac{x^{n}{y}^{n} K^{(n,n)} (\zeta_{\varepsilon x,\varepsilon y}(\varepsilon x),\xi_{\varepsilon x, \varepsilon y}(\varepsilon y))}{n!n!},
\]
such that $w_{3,x,y}: [0,\varepsilon_0] \to \RR^{n}$ is bounded
and $\vect{w}_{1,y},\vect{w}_{2,x}: [0,\varepsilon_0] \to \RR^{n}$ are bounded  vector functions (depending only on $y$ and $x$ respectively),  defined as 
\begin{align*}
\vect{w}_{1,y}(\varepsilon) & \eqdef 
\frac{y^n}{n!}
\begin{bmatrix}
 K^{(0,n)}(0, \theta_{\varepsilon y,1}(\varepsilon y)) & K^{(1,n)}(0, \theta_{\varepsilon y,2}(\varepsilon y))    \,\cdots \, 
\frac{K^{(n-1,n)}(0, \theta_{\varepsilon y,n}(\varepsilon y)) }{(n-1)!}\end{bmatrix}^{\T}, \\
  \vect{w}_{2,x}(\varepsilon) &\eqdef
   \frac{x^{n}}{n!}
 \begin{bmatrix}
 K^{(n,0)}(\eta_{\varepsilon x,1}(\varepsilon x),0) &  K^{(n,1)}(\eta_{\varepsilon x,2}(\varepsilon x),0)  \,\cdots \,
 \frac{K^{(n,n-1)}(\eta_{\varepsilon x,n}(\varepsilon x), 0) }{(n-1)!}\end{bmatrix}^{\T}.\end{align*}
Let  $\varepsilon_0 > 0$, such that $\{\varepsilon_0  {x}_1, \ldots, \varepsilon_0  {x}_n \} \subset \Omega$.
From \eqref{eq:kernel1d_expansion_compact},  the scaled kernel matrix 
%\[
%\matr{K}_{\varepsilon} \eqdef \left[K(\varepsilon {x}_i,\varepsilon {x}_j)\right]^{k+1,k+1}_{i,j=1}.
%\]
admits for $\varepsilon \in [0, \varepsilon_0]$ the expansion
\begin{equation}\label{eq:macalurin_matrix_1d}
\matr{K}_{\varepsilon} = \matr{V} \matr{\Delta}  \matr{W} \matr{\Delta}  \matr{V}^{\T} + 
\varepsilon^{k+1} (\matr{V} \matr{\Delta} \matr{W}_1(\varepsilon)  +  \matr{W}_2(\varepsilon) \matr{\Delta}
\matr{V}^{\T}) + \varepsilon^{2(k+1)} \matr{W}_3(\varepsilon)
\end{equation}
with $k = n-1$,  $\matr{W} \!=\! \Wronsk{k}$, $\matr{V} \!=\! \matr{V}_{\le k}$,  $\matr{\Delta}\! =\! \Dscale{k}{\varepsilon}$   as in \eqref{eq:diag_scaling},  and $\matr{W}_1(\varepsilon) ,\matr{W}_2(\varepsilon),\matr{W}_3(\varepsilon)$  are $\O(1)$ matrices defined respectively as $\matr{W}_3(\varepsilon) \eqdef \left[{w}_{3,x_i,x_j}(\varepsilon)\right]_{i,j=1}^{n,n}$,
\begin{align*}
\matr{W}_1(\varepsilon) \eqdef \begin{bmatrix}\vect{w}_{1,x_1} (\varepsilon) & \cdots & \vect{w}_{1,x_n} (\varepsilon) \end{bmatrix},
\matr{W}_2(\varepsilon) \eqdef \begin{bmatrix}\vect{w}_{2,x_1} (\varepsilon)& \cdots & \vect{w}_{2,x_n} (\varepsilon) \end{bmatrix}^{\T}.
\end{align*} 
Let $\matr{V} = \matr{Q}\matr{R}$ be the (full) QR factorization.
Then from \Cref{lem:scaling_1d}, we have that
\[
\matr{\Delta}^{-1} \matr{Q}^{\T} \matr{V}  \matr{\Delta} = \widetilde{\matr{R}} + \O(\varepsilon), \quad\text{where }  \widetilde{\matr{R}} = \diag(\matr{R}).
\]
By pre-/post-multiplying \eqref{eq:macalurin_matrix_1d} by $\matr{\Delta}^{-1}\matr{Q}^{\T}$ and its transpose, we get 
\begin{align}
\matr{\Delta}^{-1} \matr{Q}^{\T}  \matr{K}_{\varepsilon} \matr{Q} \matr{\Delta}^{-1}  
&= (\widetilde{\matr{R}} +  \O(\varepsilon)) \matr{W}  (\widetilde{\matr{R}}^{\T} +  \O(\varepsilon)) \nonumber\\
& + \varepsilon^{k+1} (\widetilde{\matr{R}} +  \O(\varepsilon))  \matr{W}_1(\varepsilon) \matr{Q}   \matr{\Delta}^{-1}+ 
\varepsilon^{k+1} \matr{\Delta}^{-1} \matr{Q}^{\T}  \matr{W}_2(\varepsilon) (\widetilde{\matr{R}}^{\T} +  \O(\varepsilon)) \nonumber \\ \phantom{\matr{W}}
&+ (\varepsilon^{k+1} \matr{\Delta}^{-1}) \matr{Q}^{\T}  \matr{W}_3(\varepsilon)\matr{Q}  (\varepsilon^{k+1} \matr{\Delta}^{-1}) \nonumber\\
&= \widetilde{\matr{R}} {\matr{W}}\widetilde{\matr{R}}^{\T}  + \O(\varepsilon).\label{eq:1d_smooth_scaled_K}
\end{align}
where the last equality follows from  $\varepsilon^{k+1}\matr{\Delta}^{-1} = \O(\varepsilon)$.
Now we are ready to prove the statements of the theorem.
\begin{enumerate}
\item
From \eqref{eq:1d_smooth_scaled_K} we immediately get
\[
 \varepsilon^{-n(n-1)} \det\matr{K}_{\varepsilon}  = (\det \widetilde{\matr{R}})^2\det{\matr{W}} + \O(\varepsilon)   = (\det {\matr{R}})^2\det{\matr{W}} + \O(\varepsilon).
\] 
\item 
Since $k = n-1$,  we can also rewrite \eqref{eq:1d_smooth_scaled_K} as
\begin{equation}\label{eq:QtKQ_smooth}
\matr{Q}^{\T}  \matr{K}_{\varepsilon} \matr{Q} = 
\begin{bmatrix}
\O(1) & \O(\varepsilon) & \cdots & \O(\varepsilon^{k}) \\
\O(\varepsilon) & \O(\varepsilon^2) & \cdots & \O(\varepsilon^{k+1}) \\
\vdots  & \vdots &     & \vdots \\
\O(\varepsilon^{k}) & \O(\varepsilon^{k+1}) & \cdots & \O(\varepsilon^{2k})  
\end{bmatrix},
\end{equation}
whose lower right submatrices  have orders
\begin{equation}\label{eq:block_s_smooth}
(\matr{Q}_{s:k})^{\T}  \matr{K}_{\varepsilon} \matr{Q}_{s:k} = \O(\varepsilon^{2s}),
\end{equation}
where  $\matr{Q}_{s:k}$ denotes the matrix of the last $n-s$ columns of $\matr{Q}$. 
By \Cref{lem:courant_fischer_order}, \eqref{eq:block_s_smooth}   implies the required orders of the eigenvalues.
\end{enumerate}
\end{proof}
\begin{proof}[Proof of \Cref{cor:1d_smooth_ev_ratio}]
\noindent 1. Follows  from \eqref{eq:1d_smooth_ev_main_term}   and \Cref{lem:main_terms_esp_separated}.

\noindent 2. From \Cref{thm:1d_smooth_ev}, we have $\widetilde{\lambda}_{\ell} \neq 0$ for $1\le \ell \le s$.  
Let $\matr{Q} \in \RR^{n\times n}$ be  the factor of the  QR factorization of $\matr{V}_{\le n-1}$ (which can also be taken as $\Qfull$ factor in the full QR factorization for any $\matr{V}_{\le \ell-1}$, $\ell < n$).
From \eqref{eq:block_s_smooth}  and \Cref{lem:courant_fischer_ev},
$\vect{p}_1,\ldots, \vect{p}_{\ell}$ are orthogonal to $\mspan(\matr{Q}_{:,\ell:k})$, for each $1\le \ell \le s$.
Due to orthonormality of the columns of $\matr{Q}$,  the vectors $\vect{p}_1,\ldots, \vect{p}_{s}$ must coincide with its first $s$ columns.

\noindent 3. The result on eigenvalues follows from 1. For the result on eigenvectors, we note that if the first $n-1$ columns of  $\matr{Q}$ are limiting eigenvectors, then the last column should be the remaining limiting eigenvector.
\end{proof}

\subsection{Proofs for  the 1D finite smoothness case}\label{sec:proof_1d_fs}
\begin{proof}[Proof of {\Cref{thm:det_1d_finite_smoothness}}]
First, we will rewrite the expansion \eqref{eq:finite-smoothness-kernel-expansion} in a convenient form. 
We will group the elements in \eqref{eq:fs_kernel_wronskian_diag} to get
\[
\matr{K}_\varepsilon = \matr{V}_{\le 2r-2} \Dscale{2r-2}{\varepsilon}\matr{W}_{\ultriangle}\Dscale{2r-2}{\varepsilon}\matr{V}_{\le 2r-2}^{\T} + \varepsilon^{2r-1} (f_{2r-1} \matr{D}_{(2r-1)} + \O(\varepsilon)),
\]
where $\matr{W}_{\ultriangle} \in \RR^{(2r-1) \times (2r-1)}$  is the antitriangular matrix defined as
\begin{equation}\label{eq:sum_wronskian_diagonals}
\matr{W}_{\ultriangle} =  \matr{W}_{\diagup 0} +   \matr{W}_{\diagup 2} +  \cdots +  \matr{W}_{\diagup 2(r-2)},
\end{equation}
and  $\matr{W}_{\diagup s}$ are defined\footnote{In the sum \eqref{eq:sum_wronskian_diagonals},  $\matr{W}_{\diagup 2\ell}$ are padded by zeros to ${(2r-1) \times (2r-1)}$ matrices.} in \eqref{eq:wdiag_def_1d}.
For example, in case when $r=2$
\[
\matr{W}_{\ultriangle}= 
\begin{bmatrix}
  f_{0} & 0 & f_{1} \\
  0 & -2f_{1} & 0 \\
  f_1 & 0 & 0
\end{bmatrix}.
\] 
Next, we note that $\matr{W}_{\ultriangle}$ can be split as
\[
\matr{W}_{\ultriangle}  = 
\begin{bmatrix}\Wronsk{r-1} & \widetilde{\matr{W}}_1 \\
\widetilde{\matr{W}}_2 & \\
\end{bmatrix},
\] 
where  $\Wronsk{r-1}$ is exactly the Wronskian matrix defined in \eqref{eq:Wronskian}.
Therefore, since the matrices $\matr{V}_{\le 2r-2}$ and $\Dscale{2r-2}{\varepsilon}$ can be partitioned as
\[
\matr{V}_{\le 2r-2}  = \begin{bmatrix} \matr{V}_{\le r-1} &\matr{V}_{rest} \end{bmatrix}, \quad 
\Dscale{2r-2}{\varepsilon} = \begin{bmatrix} \Dscale{r-1}{\varepsilon} & \\
& \varepsilon^{r} \Dscale{r-2}{\varepsilon} \end{bmatrix},
\]
we get 
\begin{equation*}
\begin{split}
&\matr{V}_{\le 2r-2} \Dscale{2r-2}{\varepsilon} \widetilde{\matr{W}} \Dscale{2r-2}{\varepsilon}\matr{V}_{\le 2r-2}^{\T} 
= \matr{V}_{\le r-1}   \Dscale{r-1}{\varepsilon} \Wronsk{r-1}   \Dscale{r-1}{\varepsilon}\matr{V}_{\le r-1}^{\T}  \\
&+\varepsilon^r \matr{V}_{\le r-1}   \Dscale{r-1}{\varepsilon} \underbrace{\widetilde{\matr{W}}_{1}   \Dscale{r-2}{\varepsilon}\matr{V}_{rest}^{\T}}_{\matr{W}_1(\varepsilon)} + 
\varepsilon^r \underbrace{\matr{V}_{rest} \Dscale{r-2}{\varepsilon}\widetilde{\matr{W}}_{2}}_{\matr{W}_2(\varepsilon)} \Dscale{r-1}{\varepsilon}\matr{V}_{\le r-1}^{\T},
\end{split}
\end{equation*}
which, after denoting $\matr{W} = \Wronsk{r-1}$, $\matr{V} = \matr{V}_{\le r-1}$, $\matr{\Delta} = \Dscale{r-1}{\varepsilon}$, gives
\[
\matr{K}_{\varepsilon} = \matr{V} \matr{\Delta}  \matr{W} \matr{\Delta}  \matr{V}^{\T} + 
\varepsilon^{r} (\matr{V} \matr{\Delta} \matr{W}_1(\varepsilon)  +  \matr{W}_2(\varepsilon) \matr{\Delta}
\matr{V}^{\T}) + \varepsilon^{2r-1} (f_{2r-1} \matr{D}_{(2r-1)} + \O(\varepsilon)).
\]
Next, we take the QR decomposition   $\matr{V}$  \eqref{eq:qr_full}
and consider a  diagonal scaling matrix 
\[
\widetilde{\matr{\Delta}} = \begin{bmatrix} \matr{\Delta} & \\
& \varepsilon^{r-1} \matr{I}_{n-r} \end{bmatrix} \in \RR^{n\times n}.
\]
After pre-/post-multiplying $\matr{K}_{\varepsilon}$ by $\widetilde{\matr{\Delta}}^{-1}\Qfull^{\T}$ and its transpose, we get 
\begin{equation}\label{eq:finite_smoothness_scaled_K}
\widetilde{\matr{\Delta}}^{-1}\Qfull^{\T}\matr{K}_{\varepsilon}\Qfull\widetilde{\matr{\Delta}}^{-1} = 
 \begin{bmatrix}{\matr{\Delta}}^{-1}\Qthin^{\T}\matr{K}_{\varepsilon}\Qthin{\matr{\Delta}}^{-1} & \varepsilon^{1-r} {\matr{\Delta}}^{-1}\Qthin^{\T}\matr{K}_{\varepsilon}\Qort  \\
 \varepsilon^{1-r} \Qort^{\T}\matr{K}_{\varepsilon}\Qthin{\matr{\Delta}}^{-1} & \varepsilon^{2-2r} \Qort^{\T}\matr{K}_{\varepsilon}\Qort \end{bmatrix},
\end{equation}
where $\Qfull = \begin{bmatrix}\Qthin & \Qort \end{bmatrix}$ as in \eqref{eq:qr_full}. 
For the upper-left block we get,  by \Cref{lem:scaling_1d}
\[
{\matr{\Delta}}^{-1}\Qthin^{\T}\matr{K}_{\varepsilon}\Qthin{\matr{\Delta}}^{-1} = 
\diag(\Rthin)\matr{W}\diag(\Rthin) + \O(\varepsilon).
\]
The lower-left block (which is a transpose of the upper-right one) becomes
\[
 \varepsilon^{1-r} \Qort^{\T}\matr{K}_{\varepsilon}\Qthin{\matr{\Delta}}^{\text{-}1} 
\! = \varepsilon (\Qort^{\T}(\matr{W}_2(\varepsilon) \diag(\Rthin) +\varepsilon^{r-1}f_{2r-1} \matr{D}_{(2r-1)}\Qthin{\matr{\Delta}}^{\text{-}1}) + {\O}(\varepsilon)).
\]
Finally, the lower right block is
\[
\varepsilon^{2-2r} \Qort^{\T}\matr{K}_{\varepsilon}\Qort =
\varepsilon ( f_{2r-1} \Qort^{\T}\matr{D}_{(2r-1)} \Qort+ \O(\epsilon)).
\]
\noindent 1.
Combining the blocks in \eqref{eq:finite_smoothness_scaled_K} gives
\begin{align*}
& \varepsilon^{-r(r-1)-2(n-r)(r-1)} \det\matr{K}_{\varepsilon} = 
\det(\widetilde{\matr{\Delta}}^{-1}\Qfull^{\T}\matr{K}_{\varepsilon}\Qfull\widetilde{\matr{\Delta}}^{-1})  = \\
& =
\varepsilon^{n-r} ((\det\Rthin)^2 \det \matr{W}
\det( f_{2r-1} \Qort^{\T}\matr{D}_{(2r-1)} \Qort) + \O(\varepsilon)),\\
& =
(-1)^{r} \varepsilon^{n-r} \left( \det \matr{W}
\det
\begin{bmatrix}
f_{2r-1}  \bD_{(2r-1)} &  \matr{V}_{\le r-1} \\
 \matr{V}_{\le r-1}^{\T} & 0 
\end{bmatrix} + \O(\varepsilon)\right),
\end{align*}
where the last equality follows by \Cref{lem:saddle_point_mat}.

\noindent 2. From \eqref{eq:finite_smoothness_scaled_K} it follows that
\begin{equation}\label{eq:QtKQ_finite_smoothness}
\Qfull^{\T}  \matr{K}_{\varepsilon} \Qfull = 
\begin{bmatrix}
\O(1) & \cdots & \O(\varepsilon^{r-1}) & \O(\varepsilon^{r}) & \cdots &  \O(\varepsilon^{r})   \\
\vdots  &  &\vdots &   \vdots & & \vdots \\
\O(\varepsilon^{r-1}) & \cdots & \O(\varepsilon^{2(r-1)}) & \O(\varepsilon^{2r-1}) & \cdots &  \O(\varepsilon^{2r-1})  \\
\O(\varepsilon^{r}) & \cdots & \O(\varepsilon^{2r-1}) & \O(\varepsilon^{2r-1}) & \cdots &  \O(\varepsilon^{2r-1})  \\
\vdots  &  &\vdots &   \vdots & & \vdots \\
\O(\varepsilon^{r}) & \cdots & \O(\varepsilon^{2r-1}) & \O(\varepsilon^{2r-1}) & \cdots &  \O(\varepsilon^{2r-1})  \\
\end{bmatrix},
\end{equation}
thus \Cref{lem:courant_fischer_order} implies the  orders of the eigenvalues, as in the proof of \Cref{thm:det_1d_smooth}.
\end{proof}
\begin{proof}[{Proof of \Cref{thm:1d_finite_smoothness_ev}}]
1. Note that $K \in \calC^{(r,r)}$, hence for $s \le r$  we can proceed as in \Cref{thm:1d_smooth_ev}, taking into account the fact that the orders of the eigenvalues are given by \Cref{thm:det_1d_finite_smoothness}.
Therefore, \eqref{eq:1d_smooth_ev_main_term} holds true for $1\le s \le n$.

2. For $k=n-1$, from \eqref{eq:QtKQ_finite_smoothness}, we have that
\begin{equation}\label{eq:block_s_finite_smoothness}
(\matr{Q}_{s:k})^{\T}  \matr{K}_{\varepsilon} \matr{Q}_{s:k} = 
\begin{cases}\O(\varepsilon^{2s}), & s < r, \\
\O(\varepsilon^{2r-1}), & s \ge r.
\end{cases}
\end{equation}
Then the statement follows from \Cref{lem:courant_fischer_ev}, as  in the proof of \Cref{cor:1d_smooth_ev_ratio}.

3. Now let us consider the case $s > r$. 
In this case, we have
\begin{align*}
&\varepsilon^{-n(2r-1)+r^2} e_s(\matr{K}_{\varepsilon}) = \varepsilon^{-n(2r-1)+r^2} \sum\limits_{\substack{\calY \subset \{1,\ldots,n\}\\|\calY|=s}} \det (\matr{K}_{\varepsilon,\calY}) \\
& =  \det \Wronsk{r-1} \!\!\!\sum\limits_{|\calY|=s}  
\det(f_{2r-1} \QortSub{\calY}^{\T} \bD_{(2r-1),\calY}  \QortSub{\calY})
\det(  \matr{V}_{\le r-1,\calY}^{\T}\matr{V}_{\le r-1,\calY})
+ \O(\varepsilon)\\
 &= 
\det \Wronsk{r-1}
 \sum\limits_{|\calY|=s}   \coefAtMonomial{\det(f_{2r-1}  \bD_{(2r-1),\calY} + t  \matr{V}_{\le r-1,\calY}  \matr{V}_{\le r-1,\calY}^{\T})}{t^r}+ \O(\varepsilon) \\
&= \det \Wronsk{r-1}
 \coefAtMonomial{e_s(f_{2r-1}  \bD_{(2r-1)} + t  \matr{V}_{\le r-1}  \matr{V}_{\le r-1}^{\T})}{t^r} + \O(\varepsilon)\\
& = \det \Wronsk{r-1} \det(  \matr{V}_{\le r-1}^{\T}\matr{V}_{\le r-1})
e_{s-r}(f_{2r-1} \Qort^{\T} \bD_{(2r-1)} \Qort)+ \O(\varepsilon),
\end{align*}
where the individual steps follow from \Cref{thm:det_1d_finite_smoothness} and \Cref{lem:esp_coef_perturbation}.
Therefore, by \Cref{lem:main_terms_esp_group} we get that for all $1 \le j \le n-r$
\[
\widetilde{e}_j (\widetilde{\lambda}_{r+1}, \ldots, \widetilde{\lambda}_{n}) = e_{j}(f_{2r-1} \Qort^{\T} \bD_{(2r-1)} \Qort),
\]
which together with \Cref{rem:esp_char_poly} completes the proof.
%\end{enumerate}
\end{proof}

\section{Multidimensional case: Preliminary facts and notations}\label{sec:nd_notations}
The multidimensional case requires introducing heavier notation, which we  review in this section.

\subsection{Multi-indices and sets}
%In this paper, we follow notation of \cite{lee2015flatkernel}.
For a multi-index $\va = (\alpha_1, \ldots, \alpha_d) \in \ZZp^d$,   denote
\[
\va! \eqdef \alpha_1! \cdots \alpha_d!, \qquad 
|\va| \eqdef \sum\limits_{k=1}^d \alpha_k,
\]
where  $0! = 1$ by convention.
For example, $|(2,1,3)| = 6$ and $(2,1,3)! = 12$.

We will frequently use the following notations
\[
\PP_k \eqdef \{ \va \in \ZZp^d : |\va| \le k  \}, \qquad
\HH_k \eqdef \{ \va \in \ZZp^d : |\va| = k  \} = \PP_k \setminus \PP_{k-1}.
\]
The cardinalities of these sets are given by the following well-known formulas:
\[
\# \PP_k = {k+d \choose d}, \qquad \# \HH_k = {k+d-1 \choose d-1} = \# \PP_k - \# \PP_{k-1},
\]
and will be used throughout this paper.

\begin{example}
For $d=1$, we have $\HH_{k} = \{k\}$ and $\PP_{k} = \{0,1, \ldots, k\}$.
For $d=2$, an example is shown in \Cref{fig:multiindices}.
\end{example}

\begin{figure}[t!]
\centering
\begin{tikzpicture}[scale=0.7]
\begin{scope}
    \draw (0,0) node[anchor=north ]  {\small$0$};
    \draw (0.5,0) node[anchor=north ]  {\small$1$};
    \draw (1,0) node[anchor=north ]  {\small$2$};
    \draw (1.5,0) node[anchor=north ]  {\small$3$};
    \draw (0,0.5) node[anchor=east]    {\small$1$};
    \draw (0,1) node[anchor=east]    {\small$2$};
    \draw (0,1.5) node[anchor=east]    {\small$3$};
  \draw[color=black, help lines, line width=.1pt] (0,0) grid[xstep=0.5cm, ystep=0.5cm] (1.9,1.9);
    
     \fill[black] (0,0) circle (2.5pt);
     \fill[black] (0.5,0) circle (2.5pt);
     \fill[black] (0,0.5) circle (2.5pt);
     \fill[black] (1,0) circle (2.5pt);
     \fill[black] (0.5,0.5) circle (2.5pt);
     \fill[black] (0,1) circle (2.5pt);
     \fill[gray] (1.5,0) circle (2.5pt);
     \fill[gray] (1,0.5) circle (2.5pt);
     \fill[gray] (0.5,1) circle (2.5pt);
      \fill[gray] (0,1.5) circle (2.5pt);
   
  %    \draw[fill=lightgray, thick] (1.5,0) -- (1.5,0.5) -- (1,0.5) -- (1,1) -- (0.5,1) -- (0.5, 1.5) -- (0,1.5) -- (0, 0) -- (1.5,0);
%     \draw (0.7,0.7) node  {\small $\calA$};
    \draw [->] (0,0) -- (0,2);
    \draw [->] (0,0) -- (2,0);
\end{scope}
\end{tikzpicture}
\caption{Sets of multiindices, $d=2$. Black dots: $\PP_2$, grey dots: $\HH_3$.}%
\label{fig:multiindices}
\end{figure}

An important class of  multi-index sets is the  lower sets.
An  $\calA \subset \ZZp^d $ is called a \emph{lower set} \cite{gasca2000polynomial} if for any  $\va \in \calA$ all ``lower'' multi-indices are also in the set, i.e.,
\[
\va \in \calA,\; \vect{\beta} \le \va  \; \Rightarrow \; \vect{\beta} \in \calA,
\]
\[
\text{where} \quad (\beta_1, \ldots, \beta_d)  \le (\alpha_1, \ldots, \alpha_d)  \iff \beta_1 \le \alpha_1, \ldots, \beta_d \le \alpha_d.
\] 
Note that all $\PP_k$  are lower sets.

\subsection{Monomials and orderings}
For a vector of variables $\vect{x} = \begin{bmatrix}x_1 & \cdots & x_d\end{bmatrix}^{\T}$,
the monomial  $\vect{x}^{\va}$ is defined as 
\[
\vect{x}^{\va} \eqdef x_1^{\alpha_1} \cdots x_d^{\alpha_d}.
\]%
\begin{remark}
Note that $|\va|$ is the total degree of the monomial $\vect{x}^{\va}$.
The sets of multi-indices $\PP_k$ and $\HH_k$ therefore correspond to    the sets of monomials of degree  $\le k$  and $=k$ respectively.
\[
\{ \vect{x}^{\va} : |\va| \le k  \}, \qquad
 \{ \vect{x}^{\va}  : |\va| = k  \} .
\]
\end{remark}
In what follows, we assume that an ordering of multi-indices, i.e., all the elements  in $\ZZp$ are linearly ordered, i.e. the relation $\prec$ is defined for all pairs of multi-indices.
For example, an ordering for $d=2$ is given by
\begin{equation}\label{eq:ordering_2d}
(0,0) \prec (1,0) \prec (0,1) \prec (2,0) \prec (1,1) \prec(0,2) \prec  (3,0) \prec (2,1) \prec (1,2)  \prec \cdots.
\end{equation}
In this paper, the ordering will not be important, as the results will not depend on the ordering. 
The only requirement is that the order  is graded \cite[Ch.2 \S 2]{cox1997ideals}, i.e.,
\[
 |\va| <  |\vect{\beta}| \quad  \Rightarrow\quad \va \prec \vect{\beta}.
\]
\begin{remark}
For convenience, in case $d \ge 2$, we can use an ordering satisfying
\[
(1,0\ldots,0 )  \prec (0,1\ldots,0 ) \prec \cdots \prec (0,0,\ldots,1),
\]
such that the matrix $\matr{V}_1$ defined in the next subsection is equal to
\[
 \matr{V}_{1}  = \begin{bmatrix} \vect{x}_1 & \ldots & \vect{x}_n\end{bmatrix}^{\T}.
\]
This is not the case for graded lexicographic or  reverse lexicographic \cite[Ch.2 \S 2]{cox1997ideals} orderings.
Instead, a graded reflected lexicographic order \cite{oeisorderings} can be used (see \eqref{eq:ordering_2d}).
\end{remark}

\subsection{Multivariate Vandermonde matrices}
Next, for an ordered set of points $\X = \{\vect{x}_1, \ldots, \vect{x}_n\} \subset \RR^d$ and set of multi-indices  $\calA = \{\va_1, \ldots, \va_m\} \subset \ZZp^d$ ordered according to the chosen ordering, we define the multivariate Vandermonde matrix
\[
\matr{V}_{\cal{A}}  = \matr{V}_{\cal{A}} (\X) = 
\left[
(\vect{x}_i)^{\va_j}
\right]_{\substack{1 \le i \le n\\1 \le j \le m}}.
\]
We will introduce a special notation $\matr{V}_{\le k} \eqdef   \matr{V}_{\PP_k}$ and $\matr{V}_{k} \eqdef  \matr{V}_{ \HH_k}$.
Since the ordering is graded, the matrix  $\matr{V}_{\le k}$  can be split into blocks $\matr{V}_{k}$  arranged by  increasing degree:
\begin{equation}\label{eq:VandermondeND}
\matr{V}_{\le k} = 
\begin{bmatrix}
\matr{V}_{0} & \matr{V}_{1}  & \cdots & \matr{V}_{k} 
\end{bmatrix}.
\end{equation}
It is easy to see that in the case $d=1$ the definition coincides with the previous definition of the Vandermonde matrix \eqref{eq:Vandermonde1D}.

An example of the Vandermonde matrix for $d=2$   and the set of points
\[
\X = \{\left[\begin{smallmatrix}y_1 \\z_1 \end{smallmatrix}\right], \left[\begin{smallmatrix}y_2 \\z_2 \end{smallmatrix}\right], \left[\begin{smallmatrix}y_3 \\z_3 \end{smallmatrix}\right]\},
\]
with the ordering as in \eqref{eq:ordering_2d} is given below
\[
 \matr{V}_{\le 2} =
\left[\begin{array}{c|cc|ccc}
1 & y_1 & z_1 & y_1^2 & y_1z_1 & z_1^2 \\
1 & y_2 & z_2 & y_2^2 & y_2z_2 & z_2^2 \\
1 & y_3 & z_3 & y_3^2 & y_3z_3 & z_3^2 \\
\end{array}\right].
\]

A special case is when the Vandermonde matrix is square, i.e., the number of monomials of degree $\le k$ is equal to the number of points:
\begin{equation}\label{eq:magic_numbers}
n = {{k+d} \choose d} = \# \PP_k.
\end{equation}
For example, $n = k+1$ if $d=1$ and $n = \frac{(k+2)(k+1)}{2}$ if $d=2$.
\begin{remark}
Unlike  the 1D case, even if all the points are different, the Vandermonde matrix $\matr{V}_{\le k}$ is not necessarily invertible.
For example, take the set of points on one of the axes
\[
\X = \{\left[\begin{smallmatrix}-1 \\0 \end{smallmatrix}\right], \left[\begin{smallmatrix}0 \\ 0\end{smallmatrix}\right], \left[\begin{smallmatrix}1 \\0 \end{smallmatrix}\right]\},
\]
for which the Vandermonde matrix  is rank-deficient:
\[
 \matr{V}_{\le 1} =
 \begin{bmatrix}
  1 & - 1& 0 \\
  1 &  0 & 0 \\
    1 &  1 & 0 \\
  \end{bmatrix}.
\]
\end{remark}
This effect is well-known in approximation theory \cite{gasca2000polynomial}.
If the square Vandermonde matrix is nonsingular, then the set of points $\X$ is called unisolvent.
It is known \cite[Prop. 4]{sauer2006polynomial} that  a general configuration of points (e.g., $\X$ are drawn from an absolutely continuous probability distribution with respect to the Lebesgue measure), is unisolvent almost surely.

\subsection{Kernels and smoothness classes}
For a function $f: \mathcal{U} \to \RR$, $\mathcal{U}  \in \RR^{d}$ and a multi-index $\va = (\alpha_1,\ldots,\alpha_d) \in \ZZp^d$ we use a shorthand notation for its  partial derivatives (if they exist):
\[
f^{(\va)}
 (\vect{x}) =\frac{\partial f^{|\va|}}{\partial x_1^{\alpha_1} \cdots\partial x_d^{\alpha_d}} (\vect{x}).
\]
It makes sense to define the smoothness classes with respect to lower sets.
For a lower set $\calA \subset \ZZp^d$ we define the class of functions $\mathcal{U} \to \RR$ which have on $\mathcal{U} $ all continuous derivatives $f^{(\va)}$, $\alpha \in \calA$. 
This class is denoted by  $\calC^{\calA}(\mathcal{U})$.

We will consider   kernels $K: \Omega \times \Omega \to \RR$ in the class $\calC^{(k,k)} (\Omega) \eqdef \calC^{\PP_k \times \PP_k} (\Omega)$,
i.e., which has all partial derivatives up to order $k$ for $\vect{x}$  and $\vect{y}$ separately.

Next, assume that we are given a kernel  $K \in \calC^{\calA \times \calB}(\Omega)$ for lower sets $\calA$ and $\calB$.
We will define the Wronskian matrix for this function as
\begin{equation}\label{eq:wronskian_nd}
\matr{W}_{\calA,\calB}  = 
\left[
\frac{K^{(\va,\vect{\beta})} (\vect{0},\vect{0})}{\va!\vect{\beta}!}
\right]_{\va \in \calA, \vect{\beta} \in \calB},
\end{equation}
where the rows and columns are indexed by multi-indices in  $\calA$ and $\calB$, according to the chosen ordering.

As a special case, we will denote $\Wronsk{k} = \matr{W}_{\PP_k} \eqdef \matr{W}_{\PP_k,\PP_k}$.
For example, for $d=2$ and $k=2$ (Example in \Cref{fig:multiindices}), and ordering \eqref{eq:ordering_2d} we have $\Wronsk{2} = $
\[
\begin{bmatrix}
K^{((0,0),(0,0))} & K^{((0,0),(1,0))} & K^{((0,0),(0,1))} & \frac{K^{((0,0),(2,0))}}{2} & {K^{((0,0),(1,1))}} & \frac{K^{((0,0),(0,2))}}{2} \\
K^{((1,0),(0,0))} & K^{((1,0),(1,0))} & K^{((1,0),(0,1))} & \frac{K^{((1,0),(2,0))}}{2} & {K^{((1,0),(1,1))}} & \frac{K^{((1,0),(0,2))}}{2} \\
K^{((0,1),(0,0))} & K^{((0,1),(1,0))} & K^{((0,1),(0,1))} & \frac{K^{((0,1),(2,0))}}{2} & {K^{((0,1),(1,1))}} & \frac{K^{((0,1),(0,2))}}{2} \\
\frac{K^{((2,0),(0,0))}}{2} & \frac{K^{((2,0),(1,0))}}{2} & \frac{K^{((2,0),(0,1))}}{2} & \frac{K^{((2,0),(2,0))}}{4} & {\frac{K^{((2,0),(1,1))}}{2}} & \frac{K^{((2,0),(0,2))}}{4} \\
K^{((1,1),(0,0))} & K^{((1,1),(1,0))} & K^{((1,1),(0,1))} & \frac{K^{((1,1),(2,0))}}{2} & {K^{((1,1),(1,1))}} & \frac{K^{((1,1),(0,2))}}{2} \\
\frac{K^{((0,2),(0,0))}}{2} & \frac{K^{((0,2),(1,0))}}{2} & \frac{K^{((0,2),(0,1))}}{2} & \frac{K^{((0,2),(2,0))}}{4} & {\frac{K^{((0,2),(1,1))}}{2}} & \frac{K^{((0,2),(0,2))}}{4} \\
\end{bmatrix},
\]
where we omit the arguments of $K^{(\va,\vect{\beta})}$.
We will also need block-antidiagonal  matrices $\matr{W}_{\diagup s} \in \RR^{\# \PP_s \times \# \PP_s}$ defined as follows 
\begin{equation}\label{eq:wronskian_diag_nd}
\matr{W}_{\diagup s} = 
\begin{bmatrix}
& & \matr{W}_{\HH_0,\HH_s} \\ 
& \iddots& \\
\matr{W}_{\HH_s,\HH_0}   & &
\end{bmatrix},
\end{equation}
where  $\matr{W}_{\mathcal{A},\mathcal{B}}$ are  blocks of the Wronskian matrix defined in \eqref{eq:wronskian_nd}.
For example
\[
\matr{W}_{\diagup 0} = 
\begin{bmatrix}
{W}_{0,0}
\end{bmatrix}, 
\quad
\matr{W}_{\diagup 1} = 
\begin{bmatrix}
& \matr{W}_{\HH_0,\HH_1} \\ \matr{W}_{\HH_1,\HH_0}   &
\end{bmatrix}, 
\]
and in general  $\matr{W}_{\diagup s}$ contains the main block antidiagonal of $\Wronsk{s}$.

\subsection{Taylor expansions}\label{sec:taylor_nd}
The standard Taylor  expansion (at $\vect{0}$, i.e., Maclaurin expansion) in the multivariate case is as follows \cite[\S 8.4.4]{zorich2004analysis}.
Let $f \in \calC^{{k+1}}(\Omega)$, where $\Omega$ is an  open neighborhood of $\vect{0}$ containing a line segment from $0$ to $\vect{x}$, denoted as $[\vect{0},\vect{x}]$.
Then the following
the  following Taylor expansion  holds:
\begin{equation}\label{eq:taylor_nd}
f( \vect{x} ) = \sum\limits_{\va \in \PP_{k}} \frac{\vect{x}^{\va}}{\va!} f^{(\va)} ( \vect{0} ) + r_k(\vect{x}),
\end{equation}
where the remainder can be expressed in the Lagrange  or  integral forms:
\[
r_k(\vect{x}) = \int\limits_{0}^{1} (k+1)(1-t)^{k} \Big( \sum\limits_{\vect{\beta} \in \HH_{k+1}} \frac{\vect{x}^{\vect{\beta}}}{\vect{\beta}!} f^{(\vect{\beta})}(t\vect{x}) \Big) dt = 
\sum\limits_{\vect{\beta} \in \HH_{k+1}} \frac{\vect{x}^{\vect{\beta}}}{\vect{\beta}!} f^{(\vect{\beta})} (\theta \vect{x})
\]
with $\theta \in [0,1]$.
A more general  Taylor  (Maclaurin) expansion has remainder in the Peano form, and requires smoothness of order one less, i.e., if $f \in \calC^{{k}}(\Omega)$, we have:
\begin{equation}\label{eq:taylor_nd_peano}
f(\vect{x}) = \sum\limits_{\va \in \PP_{k}} \frac{\vect{x}^{\va}}{\va!} f^{(\va)} (\vect{0}) +o(\norm{\vect{x}}^k_2).
\end{equation}
We  also need a ``bivariate'' version for a  function $f: \Omega \times \Omega \to \RR$ (the arguments are split into two groups)  such that
$f \in \calC^{\PP_{k_1+1} \times \PP_{k_2+1}} (\Omega \times \Omega)$.
Then we can take $\vect{x},\vect{y}$  such that $[0,\vect{x}],[0,\vect{y}] \subset \Omega$ and
apply  the same steps as in the proof of {\Cref{thm:det_1d_smooth}} to  get
\begin{equation*}
\begin{split}
& f(\vect{x},\vect{y})  =  \sum\limits_{\va \in \PP_{k_1}, \vect{\beta} \in \PP_{k_2 }} \frac{\vect{x}^{\va}\vect{y}^{\vect{\beta}}}{\va!\vect{\beta}!}  f^{(\va,\vb)} (\vect{0}, \vect{0})  + \sum\limits_{\va \in \PP_{k_1}, \vect{\beta} \in \HH_{k_2+1}}  \frac{\vect{x}^{\va}\vect{y}^{\vect{\beta}}}{\va!\vect{\beta}!}  f^{(\vect{\alpha},\vect{\beta})} (\vect{0}, \theta_{\vect{y},\vect{\alpha}} \vect{y}) \\
&+   \sum\limits_{\va \in \HH_{k_1+1}, \vect{\beta} \in \PP_{k_2}} \frac{\vect{x}^{\va}\vect{y}^{\vect{\beta}}}{\va!\vb!}  f^{(\va,\vb)} (\eta_{\vect{x},\vb}\vect{x}, \vect{0})  + \sum\limits_{\va \in \HH_{k_1+1}, \vect{\beta} \in \HH_{k_2+1}}  \frac{\vect{x}^{\va}\vect{y}^{\vb}}{\va!\vb!}  f^{(\va,\vb)} (\zeta_{\vect{x},\vect{y}}\vect{x}, \xi_{\vect{x},\vect{y}}\vect{y}), \\
\end{split}
\end{equation*}
where $\{\eta_{\vect{x},\vb}\}_{\vb\in \PP_{k_2}} \subset [0,1]$ depend on $\vect{x}$,  $\{\theta_{\vect{y},\va}\}_{\va \in \PP_{k_1}} \subset  [0,1]$  depend on $\vect{y}$, and $\zeta_{x,y}, \xi_{x,y} \in [0,1]$ depend on both $\vect{x}$ and $\vect{y}$.

\subsection{Distance matrices and expansions of radial kernels}
Next, we consider the radial kernel \eqref{eq:rbf_kernel_scaled} with order of smoothness 
$r$.
For even $k$, as in the univariate case, we will need an expansion in the form
similar to \eqref{eq:fs_kernel_wronskian_diag}, which was obtained
in the univariate case  via the binomial expansion 
\eqref{eq:dist_binomial_expansion}. 
Although we can also use the same approach in the multivariate case, we prefer to
derive this expansion directly from
Taylor's formula.
Let $K \in \calC^{2r-2}(\Omega \times \Omega)$ (not necessarily radial).
Then the Taylor expansion in Peano's form \eqref{eq:taylor_nd_peano} yields an expansion in $\varepsilon$
\[
K(\varepsilon\vect{x},\varepsilon\vect{y}) = \sum_{k=0}^{2r-2} \varepsilon^k \sum\limits_{\substack{\va, \vb \in \PP_{k}\\ |\va|+|\vb| =k}} \frac{\vect{x}^{\va} \vect{y}^{\vb}}{\va!\vb!} K^{(\va,\vb)} (\vect{0},\vect{0}) +o(\varepsilon^{2r-2}),
\]
which in matrix form can be written as
\begin{equation}\label{eq:Keps_diag_expansion_nd}
\matr{K}_{\varepsilon} =  \sum_{k=0}^{2r-2} \varepsilon^{k} \matr{V}_{\le k} \matr{W}_{\diagup k} \matr{V}^{\T}_{\le k} + o(\varepsilon^{2r-2}).
\end{equation}
For a radial kernel, the two expansions \eqref{eq:Keps_diag_expansion_nd} and \eqref{eq:finite-smoothness-kernel-expansion} coincide on $[0,\varepsilon_0]$; therefore,
the

\noindent distance matrices of even order $\matr{D}_{(2\ell)}$ have a compact expression as:
\[
f_{2\ell}\matr{D}_{(2\ell)} = \matr{V}_{\le 2\ell} \matr{W}_{\diagup 2\ell} \matr{V}^{\T}_{\le 2\ell},
\]
and moreover the expansion of $\matr{K}_{\varepsilon}$ given in
\eqref{eq:fs_kernel_wronskian_diag}  is  also valid in the multivariate case
\footnote{Note that there is an equivalent way of obtaining the expansion of the
distance matrices in terms of monomials, simply by writing $\norm{\vect{x}-\vect{y}}^{2p} =
(\sum_{i=1}^d (x_i-2y_ix_i + y_i))^p$ and expanding. }.
\begin{remark}
For $k$ odd, the matrices $\bD_{(k)}$ in the multivariate case also have the conditional positive-definiteness property (as in \Cref{lem:cpd}), except that the number of points should be $n > \# \PP_{r-1}$.
\end{remark}

\section{Results in the multivariate case}\label{sec:nd_results}
\subsection{Determinants in the smooth case}
For a degree $k$, we will introduce a notation 
for the sum of all total degrees of monomials with degrees in $\PP_k$
\[
M  = M(k,d)  \eqdef \sum\limits_{\va \in \PP_{k}} |\va|, \quad \text{such that }
\prod\limits_{\va \in \PP_{k}} {\varepsilon}^{|\va|} = \varepsilon^{M(k,d)},
\]
which is given by \footnote{See \cite[eq. (3.19)--(3.20)]{lee2015flatkernel}, where $M(k,d)$ is given in a slightly different form.}
\[
M(k,d) = d  {{k+d} \choose {d+1}}.
\]
For example, if $d=1$, then $M(k,1) = {{k+2} \choose {2}}$. 
With this notation, we can formulate the result on determinants in the multivariate case.

\begin{theorem}\label{thm:det_nd_smooth}
Assume that the kernel is in $\calC^{(k+1,k+1)}$, the scaled kernel matrix is defined by \eqref{eq:general_kernel_scaled} and \eqref{eq:scaled_kernel},  and also
\[
\# \PP_{k-1} < n \le  \# \PP_{k}. 
\]
\noindent 1. If $n =   \# \PP_{k}$, then 
\[
 \det{\matr{K}_{\varepsilon}} = \varepsilon^{2M(k,d)}(\det{\Wronsk{k}} (\det(\matr{V}_{\le k}))^2 + \O(\varepsilon)).
\]
\noindent 2.  If $n <   \# \PP_{k}$, for $\ell = \# \PP_{k}- n$, we have
\[
\det{\matr{K}_{\varepsilon}} = \varepsilon^{2(M(k,d)-k\ell)}  (\det(\matr{Y}\Wronsk{k} \matr{Y}^{\T}) \det(\matr{V}_{\le k-1}^{\T} \matr{V}_{\le k-1}) + \O(\varepsilon)),
\]
where $\matr{Y} \in \RR^{n \times \# \PP_{k}}$ is defined as
\[
\matr{Y} = 
\begin{bmatrix}
\matr{I}_{\# \PP_{k-1}} &  \\ & \Qort^{\T} \matr{V}_k
\end{bmatrix},
\]
$\matr{V}_k$ is  the Vandermonde matrix  \eqref{eq:VandermondeND} for monomials of degree $k$,
and $\Qort \in \RR^{n \times \ell}$ comes from the full QR decomposition of $\matr{V} = \matr{V}_{\le k-1}$ (see \eqref{eq:qr_full}).

\noindent 3. If  $\matr{K}_{\varepsilon}$   is   positive semidefinite on $[0,\varepsilon_0]$, the eigenvalues split into $k+1$ groups
\begin{equation}\label{eq:nd_smooth_ev_group}
\underbrace{\widetilde{\lambda}_{0,0}}_{\O(1)}, \underbrace{\{\widetilde{\lambda}_{1,j}\}_{j=1}^d}_{\O(\varepsilon^2)}, \ldots, \underbrace{\{\widetilde{\lambda}_{s,j}\}_{j=1}^{\#\HH_s}}_{\O(\varepsilon^{2s})}, \ldots,\underbrace{\{\widetilde{\lambda}_{k,j}\}_{j=1}^{\#\HH_k-\ell}}_{\O(\varepsilon^{2k})}.
\end{equation}
\end{theorem}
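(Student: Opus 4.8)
The plan is to adapt the argument used for \Cref{thm:det_1d_smooth} to the graded multivariate setting. The two ingredients I would set up first are: (i) a matrix Taylor expansion and (ii) a graded version of \Cref{lem:scaling_1d}. For (i), applying the bivariate Maclaurin formula of \Cref{sec:taylor_nd} with $k_1=k_2=k$ and substituting $(\vect{x},\vect{y})\leftarrow(\varepsilon\vect{x}_i,\varepsilon\vect{x}_j)$, one obtains, exactly as in \eqref{eq:macalurin_matrix_1d}, the identity
\[
\matr{K}_{\varepsilon} = \matr{V}_{\le k}\matr{\Delta}\Wronsk{k}\matr{\Delta}\matr{V}_{\le k}^{\T} + \varepsilon^{k+1}\bigl(\matr{V}_{\le k}\matr{\Delta}\matr{W}_1(\varepsilon)+\matr{W}_2(\varepsilon)\matr{\Delta}\matr{V}_{\le k}^{\T}\bigr) + \varepsilon^{2(k+1)}\matr{W}_3(\varepsilon)
\]
on $[0,\varepsilon_0]$, where $\matr{\Delta}=\diag(\varepsilon^{|\va|})_{\va\in\PP_k}$, $\Wronsk{k}$ is the Wronskian \eqref{eq:wronskian_nd}, and $\matr{W}_1,\matr{W}_2,\matr{W}_3$ are $\O(1)$ matrices built from the Lagrange remainders; as in $d=1$, the key point is that the middle term carries only one factor $\matr{\Delta}$. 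For (ii), since the ordering is graded, the triangular factor $\matr{R}$ in any QR factorization $\matr{V}_{\le m}=\matr{Q}\matr{R}$ is block-upper-triangular with respect to the degree grading, so $\matr{\Delta}^{-1}\matr{R}\matr{\Delta}=\widehat{\matr{R}}+\O(\varepsilon)$, where $\widehat{\matr{R}}$ denotes the block-diagonal-in-degree part of $\matr{R}$, and, crucially, $\det\widehat{\matr{R}}=\det\matr{R}$.

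For part 1 ($n=\#\PP_k$, so $\matr{V}_{\le k}$ is square), I would take $\matr{V}_{\le k}=\matr{Q}\matr{R}$ and conjugate $\matr{K}_{\varepsilon}$ by $\matr{\Delta}^{-1}\matr{Q}^{\T}$: since $\varepsilon^{k+1}\matr{\Delta}^{-1}=\O(\varepsilon)$, the two remainder terms disappear and $\matr{\Delta}^{-1}\matr{Q}^{\T}\matr{K}_{\varepsilon}\matr{Q}\matr{\Delta}^{-1}=\widehat{\matr{R}}\Wronsk{k}\widehat{\matr{R}}^{\T}+\O(\varepsilon)$, whence $\det\matr{K}_{\varepsilon}=(\det\matr{\Delta})^2\bigl((\det\widehat{\matr{R}})^2\det\Wronsk{k}+\O(\varepsilon)\bigr)=\varepsilon^{2M(k,d)}\bigl((\det\matr{V}_{\le k})^2\det\Wronsk{k}+\O(\varepsilon)\bigr)$, using $\det\matr{\Delta}=\varepsilon^{M(k,d)}$ and $\det\widehat{\matr{R}}=\det\matr{R}=\pm\det\matr{V}_{\le k}$.

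Part 2 ($\#\PP_{k-1}<n<\#\PP_k$) is where the real work lies and is the main obstacle: now $\matr{V}_{\le k}$ is wide, and this configuration never arises for $d=1$. I would split $\matr{V}_{\le k}=\begin{bmatrix}\matr{V}_{\le k-1}&\matr{V}_k\end{bmatrix}$ and $\matr{\Delta}=\blkdiag(\matr{\Delta}_{\le k-1},\varepsilon^k\matr{I}_{\#\HH_k})$, take the full QR $\matr{V}_{\le k-1}=\begin{bmatrix}\matr{Q}'&\Qort\end{bmatrix}\begin{bmatrix}\matr{R}\\ \matr{0}\end{bmatrix}$ as in \eqref{eq:qr_full}, put $\matr{Q}=\begin{bmatrix}\matr{Q}'&\Qort\end{bmatrix}$, and conjugate $\matr{K}_{\varepsilon}$ by $\matr{S}^{-1}\matr{Q}^{\T}$ with the two-scale matrix $\matr{S}=\blkdiag(\matr{\Delta}_{\le k-1},\varepsilon^k\matr{I})$. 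Then
\[
\matr{S}^{-1}\matr{Q}^{\T}\matr{V}_{\le k}\matr{\Delta} = \begin{bmatrix}\matr{\Delta}_{\le k-1}^{-1}\matr{R}\matr{\Delta}_{\le k-1} & \varepsilon^{k}\matr{\Delta}_{\le k-1}^{-1}(\matr{Q}')^{\T}\matr{V}_k \\ \matr{0} & \Qort^{\T}\matr{V}_k\end{bmatrix} \longrightarrow \blkdiag(\widehat{\matr{R}},\matr{I})\,\matr{Y},
\]
the upper-right block being $\O(\varepsilon)$ because the smallest entry of $\varepsilon^k\matr{\Delta}_{\le k-1}^{-1}$ is $\varepsilon$, and $\matr{Y}$ is exactly the matrix of the statement; the remainder terms again vanish since $\varepsilon^{k+1}\matr{S}^{-1}=\O(\varepsilon)$. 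Hence $\matr{S}^{-1}\matr{Q}^{\T}\matr{K}_{\varepsilon}\matr{Q}\matr{S}^{-1}=\blkdiag(\widehat{\matr{R}},\matr{I})\matr{Y}\Wronsk{k}\matr{Y}^{\T}\blkdiag(\widehat{\matr{R}}^{\T},\matr{I})+\O(\varepsilon)$, and taking determinants with $\det\matr{S}=\varepsilon^{M(k-1,d)+k(n-\#\PP_{k-1})}=\varepsilon^{M(k,d)-k\ell}$ and $(\det\widehat{\matr{R}})^2=(\det\matr{R})^2=\det(\matr{V}_{\le k-1}^{\T}\matr{V}_{\le k-1})$ gives the stated expansion. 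The delicate point is precisely the calibration of $\matr{S}$: it must simultaneously make the degree-$(\le k-1)$ principal block converge, force the cross-terms coupling the degree-$(\le k-1)$ and degree-$k$ parts of $\Wronsk{k}$ to decay, and keep the Taylor remainders negligible.

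For part 3, I would observe that in both cases the rotated matrix $\matr{Q}^{\T}\matr{K}_{\varepsilon}\matr{Q}$ has a staircase structure: its entry indexed by two columns of $\matr{Q}$ of ``degrees'' $p$ and $q$ (the first $\#\PP_{k-1}$ columns inheriting the degrees $0,1,\dots,k-1$ of the graded blocks, and every column of $\Qort$ counting as degree $k$) is $\O(\varepsilon^{p+q})$. Therefore, for each $1\le s\le k$, the sub-block supported on the columns of degree $\ge s$ is $\O(\varepsilon^{2s})$ and spans $n-\#\PP_{s-1}$ dimensions, so \Cref{lem:courant_fischer_order} forces $\lambda_j(\varepsilon)=\O(\varepsilon^{2s})$ for all $j>\#\PP_{s-1}$; letting $s$ run from $1$ to $k$ produces exactly the grouping \eqref{eq:nd_smooth_ev_group}, the $s$-th group having $\#\PP_s-\#\PP_{s-1}=\#\HH_s$ eigenvalues and the last having $n-\#\PP_{k-1}=\#\HH_k-\ell$.
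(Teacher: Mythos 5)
Your proposal is correct and follows essentially the same route as the paper: the bivariate Taylor expansion in matrix form, the graded block-scaling lemma (the paper's \Cref{lem:scaling_nd}), conjugation by the two-scale matrix (your $\matr{S}$ is exactly the paper's principal submatrix $\matr{E}_n$ of $\matr{\Delta}_k(\varepsilon)$, applied after the full QR of $\matr{V}_{\le k-1}$), and \Cref{lem:courant_fischer_order} for the eigenvalue groups. The only cosmetic deviation is that in part 1 you factor the square $\matr{V}_{\le k}$ directly instead of reusing the QR of $\matr{V}_{\le k-1}$ as the paper's unified treatment does, which changes nothing of substance.
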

The proof of \Cref{thm:det_nd_smooth} is postponed to \Cref{sec:nd_det_proofs}.

From \Cref{thm:det_nd_smooth}, we can also get a result on eigenvalues  and eigenvectors. For this, we
 partition the $\Qfull$ matrix in the full QR factorization of $\matr{V}_{\le k-1}$ as
\begin{equation}\label{eq:Q_partitioning}
\Qfull = \begin{bmatrix} \matr{Q}_{0} & \matr{Q}_{1} & \ldots & \matr{Q}_{k} \end{bmatrix}
\end{equation}
with $\matr{Q}_s\in \RR^{n \times \#\HH_s}$, $0 \le s < k$, and $\matr{Q}_k\in \RR^{n \times (\#\HH_k - \ell)}$.
\begin{theorem}\label{thm:nd_smooth_ev}
Let $K$ be as in \Cref{thm:det_nd_smooth}, such that $\matr{K}_{\varepsilon}$ is symmetric positive semidefinite  on $[0,\varepsilon_0]$ and  analytic  in $\varepsilon $ in a neighborhood of $0$.
Then  the eigenvalues in the groups have the form
\[
{\lambda}_{s,j} = \varepsilon^{2s}(\widetilde{\lambda}_{s,j} + \O(\varepsilon)),
\]
where $\widetilde{\lambda}_{0,0} = n K^{(0,0)}$ and the other main terms are given as follows.

\noindent 1. For $1 \le s <k$, if $\det {\Wronsk{s-1}} \neq 0$ and $\matr{V}_{\le s-1}$ is full rank, then
\begin{equation}\label{eq:nd_smooth_ev_main_term_product}
\widetilde{\lambda}_{s,1} \cdots \widetilde{\lambda}_{s,\#\HH_s}  = \frac{\det(\matr{V}^{\T}_{\le s}\matr{V}_{\le s}) \det(\Wronsk{s})}{\det(\matr{V}^{\T}_{\le s-1}\matr{V}_{\le s-1}) \det(\Wronsk{s-1})}.
\end{equation}
\noindent 2. For any $1 \le s \le k$, if $\det {\Wronsk{s-1}} \neq 0$ and $\matr{V}_{\le s-1}$ is full rank, 
the main terms 
$\widetilde{\lambda}_{s,1}, \ldots, \widetilde{\lambda}_{s,\#\HH_s}$  (or $\widetilde{\lambda}_{k,1}, \ldots, \widetilde{\lambda}_{k,\#\HH_k -\ell}$ if $s=k$)
are the eigenvalues of
\begin{equation}\label{eq:nd_smooth_ev_schur}
\matr{Q}^{\T}_{s} \matr{V}_{s}  \widetilde{\matr{W}} \matr{V}^{\T}_{s}\matr{Q}_{s},
\end{equation}
where $\widetilde{\matr{W}} = \matr{W}_{\lrcorner} - \matr{W}_{\llcorner}
(\Wronsk{s-1})^{-1}\matr{W}_{\urcorner}$ is the Schur complement coming from the
following partition of the Wronskian: 
\[
\Wronsk{s} =  \begin{bmatrix}\Wronsk{s-1} & \matr{W}_{\left\urcorner\right.} \\ \matr{W}_{\llcorner}  &\matr{W}_{\lrcorner}  \end{bmatrix}  =  \begin{bmatrix}\Wronsk{s-1} & \matr{W}_{\urcorner,s} \\ \matr{W}_{\llcorner,s}  &\matr{W}_{\lrcorner,s}  \end{bmatrix},
\]
\noindent 3. For $0 \le s <k$, if $\det {\Wronsk{s}} \neq 0$ and $\matr{V}_{\le s}$ is full rank, then the limiting eigenvectors from the $s$-th group $\vect{p}_{s,1}, \ldots, \vect{p}_{s,\#\HH_s}$  span the column space of $\matr{Q}_s$.
Moreover, if  $\det \Wronsk{k-1} \neq 0$, the remaining eigenvectors span the column space of $\matr{Q}_k$. 
\end{theorem}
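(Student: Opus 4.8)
The plan is to transcribe the univariate arguments (the proofs of \Cref{thm:1d_smooth_ev}, \Cref{cor:1d_smooth_ev_ratio}, and the $s>r$ part of \Cref{thm:1d_finite_smoothness_ev}) into the block-graded multivariate setting. Since $\matr{K}_{\varepsilon}$ is analytic, \Cref{lem:analytic_eigenvalues} lets us choose the eigenvalues $\lambda_{s,j}(\varepsilon)$ and the rank-one eigenprojectors analytic, and \Cref{thm:det_nd_smooth}(3) supplies the grouping of orders, so expansions of the form \eqref{eq:lambda_orders_differentiable} hold with exponent constant ($=2s$) on the $s$-th group; moreover $\widetilde{\lambda}_{0,0}=nK^{(0,0)}$ is read off from $\matr{K}_{\varepsilon}=K^{(0,0)}\vect{1}\vect{1}^{\T}+\O(\varepsilon)$, whose unique nonzero eigenvalue is $nK^{(0,0)}$. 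The two remaining ingredients are the determinant/ESP machinery for the main terms and \Cref{lem:courant_fischer_ev} for the limiting subspaces; throughout I use that every principal submatrix $\matr{K}_{\varepsilon,\calY}$ is again a scaled kernel matrix on the points indexed by $\calY$, so \Cref{thm:det_nd_smooth} applies to it.

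For item~1 I would follow the proof of \Cref{thm:1d_smooth_ev} verbatim: taking $\#\calY=\#\PP_{s}$ puts each $\matr{K}_{\varepsilon,\calY}$ in case~(1) of \Cref{thm:det_nd_smooth}, and summing $\det(\matr{K}_{\varepsilon,\calY})=\varepsilon^{2M(s,d)}(\det\Wronsk{s}\,(\det \matr{V}_{\le s,\calY})^{2}+\O(\varepsilon))$ over all such $\calY$ and invoking the Binet--Cauchy formula gives $\widetilde{e}_{\#\PP_{s}}=\det\Wronsk{s}\det(\matr{V}_{\le s}^{\T}\matr{V}_{\le s})$ for the main term of the $\#\PP_{s}$-th elementary symmetric polynomial (the exponent matches since $\sum_{\va\in\PP_{s}}2|\va|=2M(s,d)$). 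Dividing $\widetilde{e}_{\#\PP_{s}}$ by $\widetilde{e}_{\#\PP_{s-1}}$ and applying \Cref{lem:main_terms_esp_separated}/\Cref{lem:main_terms_esp_group} then yields \eqref{eq:nd_smooth_ev_main_term_product}; here $\widetilde{e}_{\#\PP_{s-1}}\neq 0$ is exactly the hypothesis $\det\Wronsk{s-1}\neq 0$ together with $\matr{V}_{\le s-1}$ full rank.

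Item~2, the identification with \eqref{eq:nd_smooth_ev_schur}, is the technical heart and the step I expect to be the main obstacle. Fix $s$, write $\sigma=\#\PP_{s-1}$, and take an index $j=\sigma+i$ with $1\le i\le\#\HH_{s}$; then each $\matr{K}_{\varepsilon,\calY}$ with $\#\calY=j$ lies in case~(2) of \Cref{thm:det_nd_smooth}, and after taking the Schur complement of $\Wronsk{s-1}$ inside $\matr{Y}_{\calY}\Wronsk{s}\matr{Y}_{\calY}^{\T}$ — which produces precisely the matrix $\widetilde{\matr{W}}$ of \eqref{eq:nd_smooth_ev_schur} — the main coefficient of $\det(\matr{K}_{\varepsilon,\calY})$ becomes $\det\Wronsk{s-1}\cdot\det(\QortSub{\calY}^{\T}\matr{V}_{s,\calY}\widetilde{\matr{W}}\matr{V}_{s,\calY}^{\T}\QortSub{\calY})\cdot\det(\matr{V}_{\le s-1,\calY}^{\T}\matr{V}_{\le s-1,\calY})$. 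Now I would apply \Cref{lem:esp_coef_perturbation} twice, exactly as in the $s>r$ part of the proof of \Cref{thm:1d_finite_smoothness_ev}: first on the submatrices, to rewrite the last two factors as $\coefAtMonomial{\det(\matr{V}_{s,\calY}\widetilde{\matr{W}}\matr{V}_{s,\calY}^{\T}+t\,\matr{V}_{\le s-1,\calY}\matr{V}_{\le s-1,\calY}^{\T})}{t^{\sigma}}$; then sum over $\calY$ using \Cref{lem:esp-sum-minors} to get $\widetilde{e}_{\sigma+i}=\det\Wronsk{s-1}\,\coefAtMonomial{e_{\sigma+i}(\matr{V}_{s}\widetilde{\matr{W}}\matr{V}_{s}^{\T}+t\,\matr{V}_{\le s-1}\matr{V}_{\le s-1}^{\T})}{t^{\sigma}}$; and finally apply \Cref{lem:esp_coef_perturbation} on the full $n\times n$ matrices with $\matr{V}=\matr{V}_{\le s-1}$, obtaining $\widetilde{e}_{\sigma+i}=\det\Wronsk{s-1}\det(\matr{V}_{\le s-1}^{\T}\matr{V}_{\le s-1})\,e_{i}(\Qort^{\T}\matr{V}_{s}\widetilde{\matr{W}}\matr{V}_{s}^{\T}\Qort)$ with $\Qort$ the orthogonal complement from the QR factorization of $\matr{V}_{\le s-1}$, i.e. $\Qort=\begin{bmatrix}\matr{Q}_{s}&\cdots&\matr{Q}_{k}\end{bmatrix}$ in the partition \eqref{eq:Q_partitioning}. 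The crucial simplification is that a degree-$s$ monomial evaluated at $\X$ lies in the span of $\begin{bmatrix}\matr{Q}_{0}&\cdots&\matr{Q}_{s}\end{bmatrix}$, so by the degree-graded structure of $\Qfull$ one has $\matr{Q}_{t}^{\T}\matr{V}_{s}=\matr{0}$ for $t>s$; hence $\Qort^{\T}\matr{V}_{s}$ has all but its $\matr{Q}_{s}$-row-block zero, so $e_{i}(\Qort^{\T}\matr{V}_{s}\widetilde{\matr{W}}\matr{V}_{s}^{\T}\Qort)=e_{i}(\matr{Q}_{s}^{\T}\matr{V}_{s}\widetilde{\matr{W}}\matr{V}_{s}^{\T}\matr{Q}_{s})$. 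Combining with $\widetilde{e}_{\sigma}=\det\Wronsk{s-1}\det(\matr{V}_{\le s-1}^{\T}\matr{V}_{\le s-1})$ from item~1 applied at $s-1$, this gives $\widetilde{e}_{\sigma+i}/\widetilde{e}_{\sigma}=e_{i}(\matr{Q}_{s}^{\T}\matr{V}_{s}\widetilde{\matr{W}}\matr{V}_{s}^{\T}\matr{Q}_{s})$, so \Cref{lem:main_terms_esp_group} identifies $\widetilde{\lambda}_{s,1},\dots,\widetilde{\lambda}_{s,\#\HH_{s}}$ as the eigenvalues of \eqref{eq:nd_smooth_ev_schur}; the $\varepsilon$-exponent bookkeeping is consistent via the identity $M(s,d)-s\#\HH_{s}=M(s-1,d)$, and the case $s=k$ is the same with $\matr{Q}_{k}=\Qort$.

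Finally, for item~3 I would argue as in the proof of \Cref{cor:1d_smooth_ev_ratio}. Following the proof of \Cref{thm:det_1d_smooth} but with the multivariate Maclaurin expansion of \Cref{sec:taylor_nd} and the block-graded diagonal scaling, $\Qfull^{\T}\matr{K}_{\varepsilon}\Qfull$ has its $(s,t)$-block of order $\O(\varepsilon^{s+t})$, so the trailing submatrix on the columns $\matr{Q}_{s},\dots,\matr{Q}_{k}$ is $\O(\varepsilon^{2s})$; by \Cref{lem:courant_fischer_ev} every limiting eigenvector whose eigenvalue has order $\ge 2s$ lies in $\mspan(\begin{bmatrix}\matr{Q}_{s}&\cdots&\matr{Q}_{k}\end{bmatrix})$. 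The $n-\#\PP_{s}$ eigenvectors of groups $s+1,\dots,k$ are orthonormal and lie in the $(n-\#\PP_{s})$-dimensional space $\mspan(\begin{bmatrix}\matr{Q}_{s+1}&\cdots&\matr{Q}_{k}\end{bmatrix})$, hence span it; the $\#\HH_{s}$ group-$s$ eigenvectors are orthogonal to all of them and contained in $\mspan(\begin{bmatrix}\matr{Q}_{s}&\cdots&\matr{Q}_{k}\end{bmatrix})$, so they lie in $\mspan(\matr{Q}_{s})$, and a dimension count upgrades this to equality. The hypotheses $\det\Wronsk{s}\neq 0$ and $\matr{V}_{\le s}$ full rank enter to guarantee that group~$s$ indeed occupies order exactly $2s$: they make the degrees-$\le s$ leading principal block of the rotated-and-rescaled kernel matrix nonsingular for small $\varepsilon>0$, which by the complementary (lower-bound) direction of the Courant--Fischer principle forces exactly $\#\PP_{s}$ eigenvalues to have order $\le 2s$. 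The statement for the last group is identical, with $\matr{Q}_{k}=\Qort$ and $\det\Wronsk{k-1}\neq 0$ ensuring that groups $0,\dots,k-1$ fill $\mspan(\Qthin)$.
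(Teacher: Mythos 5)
Your proposal is correct and follows essentially the same route as the paper: expansions of the ESPs via \Cref{thm:det_nd_smooth} applied to principal submatrices, the Schur complement of $\Wronsk{s-1}$ producing $\widetilde{\matr{W}}$, \Cref{lem:esp_coef_perturbation} together with the vanishing of $\matr{Q}_t^{\T}\matr{V}_s$ for $t>s$, \Cref{lem:main_terms_esp_group} for the main terms, and \Cref{lem:courant_fischer_ev} plus orthogonality and dimension counts for the eigenvector groups. The only deviations are organizational (deriving item~1 directly by Binet--Cauchy on $\#\PP_s$-sized minors rather than as the $i=\#\HH_s$ case of the item-2 computation, and justifying exactness of the group orders by a Rayleigh-quotient lower bound instead of the nonvanishing product of main terms), and both variants are sound.
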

The proof of \Cref{thm:nd_smooth_ev} is postponed to \Cref{sec:nd_proofs_ev}.

\Cref{thm:nd_smooth_ev} does not give information on the precise location of limiting eigenvectors in each group. 
We formulate the following conjecture, which we validated numerically.
\begin{conjecture}\label{conj:eigenvectors}
For $1 \le s \le k$, if $\det {\Wronsk{s}} \neq 0$ and $\matr{V}_{\le s}$ is full rank, the limiting eigenvectors in the $\O(\varepsilon^{2s})$ group  are the columns of $\matr{Q}_{s} \matr{U}_s$, where $\matr{U}_s$ contains the eigenvectors\footnote{There is a usual issue of ambiguous definition of $\matr{U}_s$ if the matrix has repeating eigenvalues.} of   the matrix  $\matr{Q}^{\T}_{s} \matr{V}_{s}   \widetilde{\matr{W}}  \matr{V}^{\T}_{s}\matr{Q}_{s}$  from \eqref{eq:nd_smooth_ev_schur}. 
\end{conjecture}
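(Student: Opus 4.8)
The plan is to refine the proof of \Cref{thm:nd_smooth_ev}, tracking not only the orders but the leading coefficients of the blocks of the eigenvectors. Let $\Qfull = \begin{bmatrix}\matr{Q}_0 & \cdots & \matr{Q}_k\end{bmatrix}$ be the partition \eqref{eq:Q_partitioning}. Since $\mspan\begin{bmatrix}\matr{Q}_0 & \cdots & \matr{Q}_t\end{bmatrix} = \mspan(\matr{V}_{\le t})$, we have $\matr{Q}_u^{\T}\matr{V}_{\le t} = \matr{0}$ whenever $u > t$, and feeding this together with the block-antidiagonal structure of $\matr{W}_{\diagup j}$ into \eqref{eq:Keps_diag_expansion_nd} yields the block expansion
\[
\matr{Q}_u^{\T}\matr{K}_{\varepsilon}\matr{Q}_v = \varepsilon^{u+v}\big(\matr{G}_{u,v} + \O(\varepsilon)\big), \qquad \matr{G}_{u,v} \eqdef (\matr{Q}_u^{\T}\matr{V}_{u})\,\matr{W}_{\HH_u,\HH_v}\,(\matr{V}_{v}^{\T}\matr{Q}_v),
\]
for $0 \le u,v \le k$. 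With $\matr{a}_t \eqdef \matr{Q}_t^{\T}\matr{V}_{t}$ (square invertible for $t < k$, full row rank for $t = k$, because $\matr{V}_{\le s}$ is full rank), the leading block matrix $[\matr{G}_{u,v}]_{u,v\le s}$ equals $\matr{A}\Wronsk{s}\matr{A}^{\T}$ with $\matr{A} = \blkdiag(\matr{a}_0,\ldots,\matr{a}_s)$, and its Schur complement relative to the leading $\Wronsk{s-1}$-block collapses, after cancelling the square invertible $\matr{A}$-factors attached to the groups $0,\ldots,s-1$, to $\matr{a}_s\widetilde{\matr{W}}\matr{a}_s^{\T} = \matr{Q}_s^{\T}\matr{V}_{s}\widetilde{\matr{W}}\matr{V}_{s}^{\T}\matr{Q}_s$, i.e. exactly the matrix in \eqref{eq:nd_smooth_ev_schur}, which we denote $\matr{S}$. (The invertibility of $\Wronsk{s-1}$ used here follows from $\det\Wronsk{s}\neq 0$ and positive semidefiniteness, $\Wronsk{s-1}$ being a principal submatrix of $\Wronsk{s}$.)

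Next I would fix $s$ and a group index $j$, invoke \Cref{lem:analytic_eigenvalues} to write $\vect{p}_{s,j}(\varepsilon) = \Qfull\vect{w}(\varepsilon)$ with $\vect{w}$ analytic, and partition $\vect{w} = (\vect{w}_0,\ldots,\vect{w}_k)$ conformally with \eqref{eq:Q_partitioning}. By the last part of \Cref{thm:nd_smooth_ev} the limiting eigenvector lies in $\mspan(\matr{Q}_s)$, so $\vect{w}_s(0) \neq \vect{0}$ and $\vect{w}_t(0) = \vect{0}$ for $t \neq s$; hence $\vect{w}_t(\varepsilon) = \varepsilon^{m_t}(\vect{c}_t + \O(\varepsilon))$ with $m_s = 0$, $m_t \ge 1$ otherwise, and $\vect{c}_t \neq \vect{0}$. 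The key claim is $m_t = |s-t|$ for all $t$. Granting it, substituting the ansatz into the block rows of $(\Qfull^{\T}\matr{K}_{\varepsilon}\Qfull - \lambda_{s,j}(\varepsilon)\matr{I})\vect{w} = \vect{0}$ and equating the coefficient of the lowest surviving power of $\varepsilon$ in each row gives $\sum_{t'\le s}\matr{G}_{t,t'}\vect{c}_{t'} = \vect{0}$ for $t < s$, $\sum_{t'\le s}\matr{G}_{s,t'}\vect{c}_{t'} = \widetilde{\lambda}_{s,j}\vect{c}_s$ for $t = s$, and a relation determining $\vect{c}_t$, $t > s$, from $\vect{c}_0,\ldots,\vect{c}_s$. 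Eliminating $\vect{c}_0,\ldots,\vect{c}_{s-1}$ from the first two families by means of the invertible $\Wronsk{s-1}$-block produces precisely $\matr{S}\vect{c}_s = \widetilde{\lambda}_{s,j}\vect{c}_s$; since $\vect{p}_{s,j}(0) = \matr{Q}_s\vect{c}_s$, this is the assertion of the conjecture, and when the $\widetilde{\lambda}_{s,j}$ in the group are distinct it pins down $\matr{U}_s$ up to column signs.

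The hard part, and the reason this remains a conjecture, is the order-of-vanishing claim $m_t = |s-t|$. The bound $m_t \ge |s-t|$ should follow by an induction moving outward from $t=s$ through the block rows, but the reverse bound is delicate: if some $\vect{w}_t$ vanished strictly faster than $\varepsilon^{|s-t|}$ it would fall out of the balance that generates the Schur complement, leaving $\vect{c}_s$ as an eigenvector of the unreduced block $\matr{G}_{s,s}$ instead of $\matr{S}$. A nondegeneracy condition — morally, that $\matr{G}_{s-1,s}$ does not annihilate the pertinent eigenvectors of $\matr{S}$ — excludes this and holds in every numerical instance we have examined, but a clean unconditional argument appears to need a full Newton-polygon (Puiseux) analysis of the eigenvector components of an analytic Hermitian family with this multilevel order structure, which we do not currently have. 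A secondary technical point is the case of repeated $\widetilde{\lambda}_{s,j}$, where the limiting eigenvectors inside a degenerate block are defined only up to rotation; there the conjecture can be asserted only for a suitable choice of $\matr{U}_s$ within each eigenspace, consistent with the footnote in its statement.
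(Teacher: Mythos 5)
First, a point of reference: the paper does not prove this statement at all --- it is stated as \Cref{conj:eigenvectors} and only ``validated numerically''; the only individual-eigenvector result the paper actually proves at this level of detail is \Cref{thm:finite_smoothness_eigenvectors} (the last group in the finitely smooth case), and that is done with entirely different machinery, namely Kato's reduction process of \Cref{sec:kato} combined with the block-staircase structure of \Cref{sec:block-staircase}. So your proposal cannot be measured against a proof in the paper; it can only be judged on its own terms, and on its own terms it is not a proof. You say so yourself: the central claim that $\matr{Q}_t^{\T}\vect{p}_{s,j}(\varepsilon)$ vanishes to order exactly $|s-t|$ is left unproven, and everything after ``Granting it'' is conditional. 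That is a genuine gap, not a routine verification, so the argument as written establishes nothing beyond what \Cref{thm:nd_smooth_ev} already gives.

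That said, the reduction you set up is sound and, with more care, closer to a proof than you claim. The block expansion $\matr{Q}_u^{\T}\matr{K}_{\varepsilon}\matr{Q}_v=\varepsilon^{u+v}(\matr{G}_{u,v}+\O(\varepsilon))$ and the identification of the leading $(s+1)\times(s+1)$ block structure with $\Wronsk{s}$ conjugated by $\blkdiag(\matr{Q}_0^{\T}\matr{V}_0,\ldots,\matr{Q}_s^{\T}\matr{V}_s)$ are correct, and the elimination does produce the Schur complement \eqref{eq:nd_smooth_ev_schur}. Two observations about the ``hard part''. First, the exact-order claim $m_t=|s-t|$ is stronger than what your own balance argument needs: blocks with $t>s$ enter every relevant row at order at least $t+s+1$ automatically (since $m_t\ge 0$), and for $t<s$ you may simply define $\vect{c}_t$ as the coefficient of $\varepsilon^{s-t}$ in $\vect{w}_t(\varepsilon)$, allowing it to be zero; the row balances at order $\varepsilon^{t+s}$ and the elimination then still force $\matr{S}\vect{c}_s=\widetilde{\lambda}_{s,j}\vect{c}_s$ with $\vect{c}_s=\vect{w}_s(0)\neq\vect{0}$, so the ``delicate reverse bound'' you worry about is not needed at all. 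Second, the one-sided bound that \emph{is} needed, $\matr{Q}_t^{\T}\vect{p}_{s,j}(\varepsilon)=\O(\varepsilon^{\,s-t})$ for $t<s$, is not obviously out of reach: the paper's proof of \Cref{thm:det_nd_smooth} shows that the scaled matrix $\matr{E}_n^{-1}\Qfull^{\T}\matr{K}_{\varepsilon}\Qfull\matr{E}_n^{-1}$ tends to $\widetilde{\matr{R}}\matr{W}\widetilde{\matr{R}}^{\T}$, and when this limit is positive definite the identity $\lambda_{s,j}(\varepsilon)=\vect{p}_{s,j}^{\T}\matr{K}_{\varepsilon}\vect{p}_{s,j}=\O(\varepsilon^{2s})$ bounds $\varepsilon^{t}\norm{\vect{w}_t(\varepsilon)}$ by a constant times $\varepsilon^{s}$, which is exactly the required bound --- albeit under a hypothesis ($\det\Wronsk{k}\neq 0$, or its rank-deficient analogue) stronger than the conjecture's $\det\Wronsk{s}\neq 0$. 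Until such a step is written down and its hypotheses reconciled with the statement (including the degenerate case of repeated $\widetilde{\lambda}_{s,j}$, where one can only assert existence of a suitable $\matr{U}_s$), the proposal remains a plausible programme, essentially parallel to, but less developed than, the Kato-reduction route the paper uses for its proved eigenvector theorem.
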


\subsection{Finite smoothness case}
We prove a generalization of \Cref{thm:det_1d_finite_smoothness} to the multivariate case.
\begin{theorem}\label{thm:det_nd_finite_smoothness}
For small $\varepsilon$ and  a radial kernel  \eqref{eq:rbf_kernel_scaled} with order of smoothness $r$:

\noindent 1.
the determinant of $\matr{K}_{\varepsilon}$ in the case $n = \#\PP_{r-1}+N$ given in \eqref{eq:scaled_kernel} has the  expansion
\[
\det (\matr{K}_{\varepsilon}) = \varepsilon^{2M(r-1,d) + (2r-1)N} \left( \widetilde{k} + \O(\varepsilon)\right),
\]
where $\widetilde{k}$ has exactly the same expression as in \eqref{eq:det_finite_smoothness} or \eqref{eq:det_finite_smoothness_Q} (with $\matr{V}_{\le r}$ and $\matr{D}_{(2r-1)}$ replaced with their multivariate counterparts).

\noindent 2.
If $\matr{K}_{\varepsilon}$   is  positive semidefinite on $[0,\varepsilon_0]$, the eigenvalues are split into $r+1$ groups
\[
\underbrace{\widetilde{\lambda}_{0,0}}_{\O(1)}, \underbrace{\{\widetilde{\lambda}_{1,j}\}_{j=1}^d}_{\O(\varepsilon^2)}, \ldots, \underbrace{\{\widetilde{\lambda}_{r-1,j}\}_{j=1}^{\#\HH_r-1}}_{\O(\varepsilon^{2(r-1)})}, \underbrace{\{\widetilde{\lambda}_{k,j}\}_{j=1}^{N}}_{\O(\varepsilon^{2r-1})}.
\]

\noindent 3.
In the analytic in $\varepsilon$ case, the main terms for the first $r$ groups are the same as in \Cref{thm:nd_smooth_ev}.
For the last group, if $\det \Wronsk{r-1} \neq 0$ and $\matr{V}_{\le r-1}$ is full rank, the main terms are the eigenvalues of 
\[
f_{2r-1} (\Qort^{\T} \bD_{(2r-1)}  \Qort),
\]
where $\Qort$ comes from the full QR factorization \eqref{eq:qr_full} of $\matr{V}_{\le r-1}$.

\noindent 4. For $0 \le s < r$, the subspace spanned by the limiting eigenvectors for the $\O(\varepsilon^{2s})$ group of eigenvalues are as in \Cref{thm:nd_smooth_ev}.
If $\det \Wronsk{r-1} \neq 0$ and $\matr{V}_{\le r-1}$ is full rank, the eigenvectors 
for the last group of $\O(\varepsilon^{2r-1})$ eigenvalues span the column space of  $\Qort$.
\end{theorem}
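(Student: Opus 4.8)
\emph{Plan.} Parts 1 and 2 will be obtained by transposing the proof of \Cref{thm:det_1d_finite_smoothness} to the multivariate setting with the notation of \Cref{sec:nd_notations}. First I would rewrite the radial-kernel expansion \eqref{eq:fs_kernel_wronskian_diag} (which, as shown in \Cref{sec:taylor_nd}, is also valid when $d>1$) by collecting the even-degree blocks into a single quadratic form
\[
\matr{K}_{\varepsilon} = \matr{V}_{\le 2r-2}\,\matr{\Delta}'\,\matr{W}_{\ultriangle}\,\matr{\Delta}'\,\matr{V}_{\le 2r-2}^{\T} + \varepsilon^{2r-1}\bigl(f_{2r-1}\matr{D}_{(2r-1)} + \O(\varepsilon)\bigr),
\]
where $\matr{\Delta}' = \diag(\varepsilon^{|\va|})_{\va\in\PP_{2r-2}}$ is the multivariate analogue of $\Dscale{}{\varepsilon}$ and $\matr{W}_{\ultriangle} = \matr{W}_{\diagup 0} + \matr{W}_{\diagup 2} + \cdots + \matr{W}_{\diagup 2(r-2)}$ is the block-antitriangular matrix assembled from the block antidiagonals \eqref{eq:wronskian_diag_nd} of the Wronskian. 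Because the kernel is radial, its mixed derivatives of odd total degree below $2r-1$ vanish, so the leading $\#\PP_{r-1}\times\#\PP_{r-1}$ block of $\matr{W}_{\ultriangle}$ is exactly $\Wronsk{r-1}$; hence, exactly as in the $1$D proof but using the degree block-partition \eqref{eq:VandermondeND}, I can split the quadratic form into a $\Wronsk{r-1}$-part supported on degrees $\le r-1$ plus $\varepsilon^{r}$ times cross terms, obtaining
\[
\matr{K}_{\varepsilon} = \matr{V}_{\le r-1}\matr{\Delta}\Wronsk{r-1}\matr{\Delta}\matr{V}_{\le r-1}^{\T} + \varepsilon^{r}\bigl(\matr{V}_{\le r-1}\matr{\Delta}\matr{W}_1(\varepsilon) + \matr{W}_2(\varepsilon)\matr{\Delta}\matr{V}_{\le r-1}^{\T}\bigr) + \varepsilon^{2r-1}\bigl(f_{2r-1}\matr{D}_{(2r-1)} + \O(\varepsilon)\bigr),
\]
with $\matr{\Delta} = \diag(\varepsilon^{|\va|})_{\va\in\PP_{r-1}}$ and $\O(1)$ matrices $\matr{W}_1,\matr{W}_2$.

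Next I would take the full QR factorization $\matr{V}_{\le r-1} = \Qfull\Rfull$ (so that $\Qort\in\RR^{n\times N}$, $N = n-\#\PP_{r-1}$, always satisfies $\Qort^{\T}\matr{V}_{\le r-1}=0$), and conjugate by $\widetilde{\matr{\Delta}}^{-1}\Qfull^{\T}$ with $\widetilde{\matr{\Delta}} = \blkdiag(\matr{\Delta},\varepsilon^{r-1}\matr{I}_N)$. The computational core is then to identify the three blocks of $\widetilde{\matr{\Delta}}^{-1}\Qfull^{\T}\matr{K}_{\varepsilon}\Qfull\widetilde{\matr{\Delta}}^{-1}$: using the block version of \Cref{lem:scaling_1d} (for any upper-triangular $\matr{R}$ compatible with the graded ordering, $\matr{\Delta}^{-1}\matr{R}\matr{\Delta}$ equals the degree-block-diagonal part of $\matr{R}$ up to $\O(\varepsilon)$), the upper-left block is $\blkdiag(\Rthin)\Wronsk{r-1}\blkdiag(\Rthin)^{\T}+\O(\varepsilon)$; because each $\matr{W}_{\diagup 2\ell}$ with $2\ell<2r$ is block-antidiagonal, every rank-one term in the even-degree quadratic form has at least one factor annihilated by $\Qort^{\T}$, so the lower-right block collapses to $\varepsilon\bigl(f_{2r-1}\Qort^{\T}\matr{D}_{(2r-1)}\Qort+\O(\varepsilon)\bigr)$ and the off-diagonal block is $\O(\varepsilon)$. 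Part 1 follows by taking determinants: $\det\widetilde{\matr{\Delta}} = \varepsilon^{M(r-1,d)+(r-1)N}$, so $\det\matr{K}_{\varepsilon} = \varepsilon^{2M(r-1,d)+(2r-1)N}(\det(\Rthin)^2\det\Wronsk{r-1}\det(f_{2r-1}\Qort^{\T}\matr{D}_{(2r-1)}\Qort)+\O(\varepsilon))$, and the leading coefficient is rewritten via \Cref{lem:saddle_point_mat} as $\widetilde{k}$ in \eqref{eq:det_finite_smoothness}--\eqref{eq:det_finite_smoothness_Q}. Part 2 follows from the resulting block order structure of $\Qfull^{\T}\matr{K}_{\varepsilon}\Qfull$: for $0\le s\le r-1$, the submatrix spanned by the columns of $\matr{Q}_s,\ldots,\matr{Q}_{r-1},\Qort$ satisfies $\matr{U}^{\T}\matr{K}_{\varepsilon}\matr{U}=\O(\varepsilon^{2s})$ (the $\matr{Q}_u,\matr{Q}_v$ blocks are $\O(\varepsilon^{u+v})$, the $\matr{Q}_u,\Qort$ blocks are $\O(\varepsilon^{r+u})$, and the $\Qort$-block is $\O(\varepsilon^{2r-1})$), so \Cref{lem:courant_fischer_order} gives the claimed orders, with the last $N$ eigenvalues being $\O(\varepsilon^{2r-1})$.

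For part 3 the first $r$ groups are handled exactly as in \Cref{thm:nd_smooth_ev}: since $f\in\calC^{2r}$, the expansion of $\matr{K}_{\varepsilon}$ up to $\O(\varepsilon^{2r-2})$ is formally that of a smooth kernel with Wronskian $\Wronsk{r-1}$, so the main-term analysis of $e_s(\matr{K}_{\varepsilon})$ for $s\le\#\PP_{r-1}$ and the Schur-complement formula \eqref{eq:nd_smooth_ev_schur} carry over verbatim. For the last group I would mimic part 3 of \Cref{thm:1d_finite_smoothness_ev}: by \Cref{lem:esp-sum-minors}, $e_{\#\PP_{r-1}+j}(\matr{K}_{\varepsilon}) = \sum_{\#\calY=\#\PP_{r-1}+j}\det(\matr{K}_{\varepsilon,\calY})$, each $\matr{K}_{\varepsilon,\calY}$ being itself a scaled radial kernel matrix on the sub-configuration $\calY$; applying part 1 to each summand, using the $\widetilde{k}$-formula together with \Cref{lem:saddle_point_mat} and a Cauchy--Binet expansion, and then \Cref{lem:esp_coef_perturbation}, yields
\[
\varepsilon^{-2M(r-1,d)-(2r-1)j}\,e_{\#\PP_{r-1}+j}(\matr{K}_{\varepsilon}) = \det\Wronsk{r-1}\,\det(\matr{V}_{\le r-1}^{\T}\matr{V}_{\le r-1})\,e_j\bigl(f_{2r-1}\Qort^{\T}\matr{D}_{(2r-1)}\Qort\bigr)+\O(\varepsilon),
\]
after which \Cref{lem:main_terms_esp_group} (applied to the separated group indexed from $s=\#\PP_{r-1}$, of size $m=N$) together with \Cref{rem:esp_char_poly} identifies the main terms of the last group with the eigenvalues of $f_{2r-1}\Qort^{\T}\matr{D}_{(2r-1)}\Qort$. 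Part 4 then follows from \Cref{lem:courant_fischer_ev}: from $\matr{U}^{\T}\matr{K}_{\varepsilon}\matr{U}=\O(\varepsilon^{2s})$ for $\matr{U}$ spanning $\matr{Q}_s,\ldots,\matr{Q}_{r-1},\Qort$, an induction on $s$ (as in part 3 of \Cref{thm:nd_smooth_ev}) pins the limiting eigenvectors of group $s<r$ to $\mspan(\matr{Q}_s)$, while $\Qort^{\T}\matr{K}_{\varepsilon}\Qort=\O(\varepsilon^{2r-1})$ together with a dimension count forces the last $N$ limiting eigenvectors to span $\mspan(\Qort)$.

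The step I expect to be the main obstacle is controlling the sub-leading contributions in part 3: in the sum $\sum_{\#\calY=\#\PP_{r-1}+j}\det(\matr{K}_{\varepsilon,\calY})$ one must show that only the sub-configurations $\calY$ for which $\matr{V}_{\le r-1,\calY}$ has full column rank contribute at order $\varepsilon^{2M(r-1,d)+(2r-1)j}$, and that all rank-deficient ones are strictly subdominant in $\varepsilon$. Crude eigenvalue-counting bounds are not sharp enough here; this requires the same careful argument tracking the interplay between the deficiency of the Vandermonde sub-block and the vanishing order of the corresponding sub-determinant that is used in the proof of \Cref{thm:det_nd_smooth}. The remaining multivariate bookkeeping --- graded orderings, the block structure of $\Rthin$, and the exponent identity $2(M(r-1,d)+(r-1)N)+N = 2M(r-1,d)+(2r-1)N$ --- is routine.
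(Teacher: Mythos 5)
Your proposal is correct and follows essentially the same route as the paper: the paper's proof simply repeats the argument of \Cref{thm:det_1d_finite_smoothness} and \Cref{thm:1d_finite_smoothness_ev} with the multivariate block-antitriangular Wronskian, the scaling matrix of \eqref{eq:diag_scaling_nd}, \Cref{lem:scaling_nd} in place of \Cref{lem:scaling_1d}, the extended scaling $\widetilde{\matr{\Delta}} = \blkdiag(\matr{\Delta}_{r-1},\varepsilon^{r-1}\matr{I}_N)$, \Cref{lem:saddle_point_mat}, and then the ESP/Binet--Cauchy argument with \Cref{lem:esp_coef_perturbation}, \Cref{lem:main_terms_esp_group} and \Cref{lem:courant_fischer_ev}, exactly as you outline. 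The obstacle you flag in part 3 is in fact benign: part 1 holds for each principal submatrix with no rank assumption on the sub-Vandermonde, so rank-deficient sub-configurations simply have a vanishing leading coefficient (hence are subdominant) and contribute zero on both sides of the \Cref{lem:esp_coef_perturbation} identity, which is why the paper can reuse the 1D computation without the extra order-tracking argument of \Cref{thm:det_nd_smooth}.
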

The proof of \Cref{thm:det_nd_finite_smoothness} is postponed to \Cref{sec:nd_proofs_ev}.
Note that we obtain a stronger result on the precise location of the last group of eigenvectors  in \Cref{sec:eigenvectors}. 

\subsection{Determinants in the smooth case}\label{sec:nd_det_proofs}
Before proving \Cref{thm:det_nd_smooth}, we again need a technical lemma, which is an analogue of \eqref{lem:scaling_1d}.

\begin{lemma}\label{lem:scaling_nd}
Let $\matr{R}$ be an upper-block-triangular  matrix 
\[
\matr{R} = 
\begin{bmatrix}
\matr{R}_{1,1} &  \matr{R}_{1,k} &\cdots  & \matr{R}_{1,k+1} \\
0 &\ddots&& \vdots \\
\vdots& \ddots& \ddots &\matr{R}_{k,k+1} \\
0 & \cdots& 0 & \matr{R}_{k+1,k+1} \\
\end{bmatrix},
\]
where the blocks $\matr{R}_{i,j} \in \RR^{M_i \times N_j}$  are not necessarily square.
Then it holds that
\[
\begin{bmatrix}
I_{M_1} & & & \\
& \varepsilon^{\text{-}1} I_{M_2} & & \\
&  & \ddots & \\
&  &  & \varepsilon^{\text{-}k}I_{M_{k+1}} 
\end{bmatrix} \matr{R} \begin{bmatrix}
I_{N_1} & & & \\
& \varepsilon I_{N_2} & & \\
&  & \ddots & \\
&  &  & \varepsilon^{k}I_{N_{k+1}} 
\end{bmatrix} =  \blkdiag(\matr{R}) + \O(\varepsilon),
\]
where $\blkdiag(\matr{R})$ is just the  block-diagonal part of $\matr{R}$:
\[
\blkdiag(\matr{R}) =  
\begin{bmatrix}
\matr{R}_{1,1} & &  \\
& \ddots &  \\
&  & \matr{R}_{k+1,k+1}  \\
\end{bmatrix}.
\]
\end{lemma}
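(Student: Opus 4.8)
The plan is to prove this by a direct block-by-block computation, exactly paralleling the proof of \Cref{lem:scaling_1d}. Denote by $\matr{\Delta}_{L}(\varepsilon)$ and $\matr{\Delta}_{R}(\varepsilon)$ the left and right scaling matrices appearing in the statement, so that the $i$-th diagonal block of $\matr{\Delta}_{L}(\varepsilon)$ is $\varepsilon^{-(i-1)}I_{M_i}$ and the $j$-th diagonal block of $\matr{\Delta}_{R}(\varepsilon)$ is $\varepsilon^{\,j-1}I_{N_j}$, for $1 \le i,j \le k+1$. Since both factors are block-diagonal with scalar multiples of identity blocks, the $(i,j)$ block of $\matr{\Delta}_{L}(\varepsilon)\,\matr{R}\,\matr{\Delta}_{R}(\varepsilon)$ is simply $\varepsilon^{-(i-1)}\matr{R}_{i,j}\,\varepsilon^{\,j-1} = \varepsilon^{\,j-i}\matr{R}_{i,j}$.

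I would then split into three cases according to the sign of $j-i$. When $i > j$, the block $\matr{R}_{i,j}$ is zero because $\matr{R}$ is upper block triangular, so it contributes nothing. When $i = j$, the exponent vanishes and the block equals $\matr{R}_{i,i}$, which is exactly the $i$-th diagonal block of $\blkdiag(\matr{R})$. When $i < j$, the exponent $j-i$ is a positive integer, so the block is $\varepsilon^{\,j-i}\matr{R}_{i,j} = \O(\varepsilon)$. Since $k$ is fixed, there are only finitely many such off-diagonal blocks, so they may be absorbed into a single $\O(\varepsilon)$ term, and collecting everything yields $\matr{\Delta}_{L}(\varepsilon)\,\matr{R}\,\matr{\Delta}_{R}(\varepsilon) = \blkdiag(\matr{R}) + \O(\varepsilon)$, as claimed.

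There is no genuine obstacle here; the only point requiring a little care is the index bookkeeping that matches the block position $(i,j)$ to the net power $\varepsilon^{\,j-i}$, together with the trivial observation that the number of blocks is fixed so that finitely many $\O(\varepsilon)$ contributions sum to an $\O(\varepsilon)$ term. Optionally, to make the statement fully transparent, I would first record the exact product as the block matrix whose $(i,j)$ block is $\varepsilon^{\,j-i}\matr{R}_{i,j}$ — an upper block triangular matrix with the original diagonal blocks and each strictly-upper block scaled by a positive power of $\varepsilon$ — in direct analogy with the displayed matrix in the proof of \Cref{lem:scaling_1d}, and only then pass to the asymptotic conclusion.
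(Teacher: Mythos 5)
Your proof is correct and follows exactly the route the paper takes: the paper's proof of \Cref{lem:scaling_nd} simply says it is analogous to \Cref{lem:scaling_1d}, i.e.\ the same direct computation showing the $(i,j)$ block of the scaled product is $\varepsilon^{j-i}\matr{R}_{i,j}$, so diagonal blocks survive and strictly upper blocks are $\O(\varepsilon)$. Nothing is missing.
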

\begin{proof}
The proof is analogous to that of \Cref{lem:scaling_1d}.
\end{proof}

\begin{proof}[Proof  of \Cref{thm:det_nd_smooth}]
First, we  fix  a degree-compatible ordering of multi-indices
\[
\PP_k = \{{\va_1},\ldots, {\va_{\# \PP_k}}\},
\]
and denote the $\#\PP_k \times \#\PP_k$ matrix (note that $\# \PP_k = n + \ell$)
\begin{equation}\label{eq:diag_scaling_nd}
\matr{\Delta} = \matr{\Delta}_k(\varepsilon)  \eqdef
\begin{bmatrix}
\varepsilon^{|{\va_1}|} & & \\
& \ddots & \\
&  & \varepsilon^{|{\va_{\# \PP_k}}|} \\
\end{bmatrix} = 
\begin{bmatrix}
1 & & & \\
& \varepsilon I_{\# \HH_1} & & \\
&  & \ddots & \\
&  &  & \varepsilon^{k}I_{\# \HH_k} 
\end{bmatrix},
\end{equation}
and  by $\matr{E}_n$ the principal $n\times n$ submatrix of $\matr{\Delta}$.
Note that  their determinants are
\begin{equation}\label{eq:det_delta}
\det(\matr{\Delta}) = \varepsilon^{M(k,d)} \text{ and }\det(\matr{E}_n) = \varepsilon^{M(k,d) - \ell k}.
\end{equation}
From the bimultivariate Taylor expansion, as in the proof of {\Cref{thm:det_1d_smooth}}, we get:
\begin{equation}\label{eq:kernelnd_expansion_compact}
\begin{split}
&K(\varepsilon \vect{x},\varepsilon \vect{y}) = 
[(\varepsilon \vect{x})^{\va_{1}},\ldots, (\varepsilon \vect{x})^{\va_{n+\ell}}]\matr{W}[(\varepsilon \vect{y})^{\va_{1}},\ldots, (\varepsilon \vect{y})^{\va_{n+\ell}}]^{\T} \\
 &+ \varepsilon^{k+1} [(\varepsilon \vect{x})^{\va_{1}},\ldots, (\varepsilon \vect{x})^{\va_{n+\ell}}] \vect{w}_{1, \vect{y}} (\varepsilon) + \varepsilon^{k+1} \vect{w}_{2, \vect{x}}(\varepsilon)^{\T} [(\varepsilon \vect{y})^{\va_{1}},\ldots, (\varepsilon \vect{y})^{\va_{n+\ell}}]^{\T} \\
& +  \varepsilon^{2(k+1)} w_{3,\vect{x},\vect{y}} (\varepsilon),
\end{split}
\end{equation}
where $\vect{w}_{1}$, $\vect{w}_{2}$ are bounded (and continuous)  $[0,\varepsilon_0] \to \RR^{n}$ vector functions depending on $\vect{y}$ and $\vect{x}$ respectively, and $w_{3}$ is a bounded (and continuous) function $ [0,\varepsilon_0]  \to \RR$.

Let  $\varepsilon_0 > 0$, such that $\{\varepsilon_0  \vect{x}_1, \ldots, \varepsilon_0  \vect{x}_n \} \in \Omega$ for all $i$.
From \eqref{eq:kernelnd_expansion_compact},  the scaled kernel matrix 
admits for $0 \le \varepsilon \le \varepsilon_0$ the expansion
\begin{equation}\label{eq:macalurin_matrix_nd}
\matr{K}_{\varepsilon} = \matr{V}_{\le k} \matr{\Delta} \matr{W} \matr{\Delta}  \matr{V}^{\T}_{\le k} + 
\varepsilon^{n} (\matr{V}_{\le k} \matr{\Delta} \matr{W}_1(\varepsilon)  +  \matr{W}_2(\varepsilon) \matr{\Delta}
\matr{V}^{\T}_{\le k}) + \varepsilon^{2n} \matr{W}_3(\varepsilon),
\end{equation}
where $\matr{W}_1(\varepsilon) ,\matr{W}_2(\varepsilon),\matr{W}_3(\varepsilon) =  \O(1)$,  $\matr{W} = \Wronsk{k}$, $\matr{V} = \matr{V}_{\le k}$.

Next, we take the full QR decomposition $\matr{V}  = \Qfull\Rfull$ of $\matr{V} = \matr{V}_{\le k-1}$,  partitioned  as in \eqref{eq:qr_full}, so that $\Rthin \in \RR^{\#\PP_{k-1}\times\#\PP_{k-1}} $ and $\Qort \in \RR^{n \times (n-\#\PP_{k-1})}$.
Note that
\[
\Qfull^{\T} \matr{V}_{\le k} =
\begin{bmatrix}
\Rthin & \Qthin^{\T} \matr{V}_{k}  \\
\matr{O} &  \Qort^{\T} \matr{V}_{k} 
\end{bmatrix},
\]
and by \Cref{lem:scaling_nd} we have
\[
\matr{E}^{-1}_n \Qfull^{\T} \matr{V}_{\le k} \matr{\Delta} =
\underbrace{\begin{bmatrix}
\blkdiag({\Rthin}) &   \\
  &  \Qort^{\T} \matr{V}_{k} 
\end{bmatrix}}_{\widetilde{\matr{R}}}  + \O(\varepsilon). 
\]
Next, we pre-/post- multiply \eqref{eq:macalurin_matrix_nd}  by $\matr{E}^{-1}_n\Qfull^{\T}$ and its transpose, to get (as in \eqref{eq:1d_smooth_scaled_K}),
\begin{equation}\label{eq:nd_smooth_scaled_K}
\begin{split}\matr{E}^{-1}_n \Qfull^{\T}  \matr{K}_{\varepsilon} \Qfull
\matr{E}^{-1}_n = \widetilde{\matr{R}} {\matr{W}}\widetilde{\matr{R}}^{\T}  + \O(\varepsilon).
\end{split}
\end{equation}
Finally, we prove the statements of the theorem.
\begin{enumerate}
\item
From \eqref{eq:det_delta} we have that
\[
\varepsilon^{-2(M(k,d) - \ell k)} \det \matr{K}_{\varepsilon} = \det (\widetilde{\matr{R}} {\matr{W}}\widetilde{\matr{R}}^{\T}).
\]
Thus, if $n = \#\PP_k$, then we have
\[
 \det (\widetilde{\matr{R}} {\matr{W}}\widetilde{\matr{R}}^{\T}) = (\det(\widetilde{\matr{R}}))^2 \det(\matr{W}) =  \det( \matr{V}_{\le k})^2 \det(\matr{W}),
\]
where the last equality follows from the fact that
\[
\det(\widetilde{\matr{R}}) = \det(\blkdiag({\Rthin})) \det(\Qort^{\T} \matr{V}_{k} ) = \det(\Qfull^{\T} \matr{V}_{\le k})  = \det(\matr{V}_{\le k}),
\]
because $\Qfull^{\T} \matr{V}_{\le k}$ is block-triangular.
\item
For $n < \# \PP_k$, we note that
\[
\widetilde{\matr{R}} = \begin{bmatrix}
\blkdiag{\Rthin} &   \\
  &  \matr{I}_{\ell}
\end{bmatrix} \matr{Y},
\]
hence
\[
 \det (\widetilde{\matr{R}} {\matr{W}}\widetilde{\matr{R}}^{\T}) =  (\det(\Rthin))^2 \det(\matr{Y}\matr{W}\matr{Y}^{\T}) = \det(\matr{V}_{\le k-1}^{\T} \matr{V}_{\le k-1}) \det(\matr{Y}\matr{W}\matr{Y}^{\T}).
\]
\item
Finally, as in the proof of \Cref{thm:det_1d_smooth}, \eqref{eq:nd_smooth_scaled_K} implies that
\eqref{eq:block_s_smooth} holds as well  in the multivariate case; this, together with \Cref{lem:courant_fischer} completes the proof.
\end{enumerate}
\end{proof}

\subsection{Individual eigenvalues, eigenvectors, and finite smoothness}\label{sec:nd_proofs_ev}
\begin{proof}[Proof of {\Cref{thm:nd_smooth_ev}}]
\noindent 1--2.
Choose a subset  $\calY$ of $\mathcal{X}$ of size $m$, $\# \PP_{s-1} < m \le  \# \PP_{s}$, $s \le k$.
Then we have that 
\[
\matr{K}_{\varepsilon,\calY} = 
\varepsilon^{2(M-s(\# \PP_{s} - m))}  (\det(\matr{Y}\Wronsk{s} \matr{Y}^{\T}) \det(\matr{V}_{\le s-1,\calY}^{\T} \matr{V}_{\le s-1,\calY}) + \O(\varepsilon)),
\]
and 
\[
\matr{Y} = 
\begin{bmatrix}
\matr{I}_{\# \PP_{s-1}} &  \\ & \matr{R}_{s,\calY}
\end{bmatrix},
\]
where $\matr{R}_{s,\calY} = \QortSub{\calY} ^{\T} \matr{V}_s$.
In particular
\begin{equation*}
\begin{split}
\det(\matr{Y}\Wronsk{s} \matr{Y}^{\T}) & =  \det \begin{bmatrix}\Wronsk{s-1} & \matr{W}_{\urcorner}\matr{R}^{\T}_{s,\calY} \\ \matr{R}_{s,\calY}\matr{W}_{\llcorner}  &\matr{R}_{s,\calY}\matr{W}_{\lrcorner} \matr{R}^{\T}_{s,\calY}  \end{bmatrix}\\
& = \det\Wronsk{s-1} \det(\matr{R}_{s,\calY}\matr{W}_{\lrcorner} \matr{R}^{\T}_{s,\calY}  -  \matr{R}_{s,\calY}\matr{W}_{\llcorner}  (\Wronsk{s-1})^{-1} \matr{W}_{\urcorner}\matr{R}^{\T}_{s,\calY} )\\
& = \det\Wronsk{s-1} \det(\QortSub{\calY}^{\T} \matr{V}_{s}   \widetilde{\matr{W}} \matr{V}^{\T}_{s} \QortSub{\calY} ), \\
\end{split}
\end{equation*}
hence by \Cref{lem:saddle_point_mat}
\begin{equation*}
\begin{split}
&\det(\matr{Y}\Wronsk{s} \matr{Y}^{\T}) \det(\matr{V}_{\le s-1,\calY}^{\T} \matr{V}_{\le s-1,\calY}) \\
 &= \det\Wronsk{s-1} \coefAtMonomial{\det(\matr{V}_{s,\calY} \widetilde{\matr{W}} \matr{V}^{\T}_{s,\calY} + \gamma \matr{V}_{\le s-1,\calY} \matr{V}_{\le s-1,\calY}^{\T}  )}{\gamma^{\#\PP_{s-1}}} 
\end{split}
\end{equation*}
and therefore
\begin{align}
\widetilde{e}_m  &= \det\Wronsk{s-1} \coefAtMonomial{e_m(\matr{V}_{s} \widetilde{\matr{W}}  \matr{V}^{\T}_{s} + \gamma \matr{V}_{\le s-1} \matr{V}_{\le s-1}^{\T}  )}{\gamma^{\#\PP_{s-1}}}  \nonumber \\
 & = \det\Wronsk{s-1}  \det(\matr{V}_{\le s-1}^{\T} \matr{V}_{\le s-1}) e_{m- \#\PP_{s-1}}(\matr{Q}_{s:k}^{\T}\matr{V}_{s} \widetilde{\matr{W}} \matr{V}^{\T}_{s}  \matr{Q}_{s:k} )\nonumber\\
  & = \det\Wronsk{s-1}  \det(\matr{V}_{\le s-1}^{\T} \matr{V}_{\le s-1}) e_{m- \#\PP_{s-1}}(\matr{Q}_{s}^{\T}\matr{V}_{s} \widetilde{\matr{W}} \matr{V}^{\T}_{s}  \matr{Q}_{s} ), \label{eq:esp_nd_smooth}
\end{align}
where $\matr{Q}_{s:k} \eqdef \begin{bmatrix}\matr{Q}_{s} & \cdots &\matr{Q}_{k} \end{bmatrix}$, the penultimate equality follows from \Cref{lem:saddle_point_mat}, and the last equality follows from the fact that only the top block of $\matr{Q}^{\T}_{s:k}\matr{V}_{s}$ is nonzero.
The rest of the proof follows from \Cref{lem:main_terms_esp_group} to \eqref{eq:esp_nd_smooth}. 

\noindent 3. We repeat the same steps as in the proof of \Cref{cor:1d_smooth_ev_ratio} (for groups of limiting eigenvectors).
Since, from  the proof of \Cref{thm:det_nd_smooth}, for any $0 \le \ell \le s$, the order of $(\matr{Q}_{\ell+1:k})^{\T}\matr{K}_{\varepsilon} \matr{Q}_{\ell+1:k}$ is $\O(\varepsilon^{2(\ell+1)})$ (as in \eqref{eq:block_s_smooth}), and the order of eigenvalues in $\O(1),\ldots,\O(\varepsilon^{2s})$ groups is exact, all the eigenvectors
\[
\{\vect{p}_{\ell,1}, \ldots, \vect{p}_{\ell,\#\HH_\ell}\}_{0 \le \ell \le s},
\]
must be orthogonal to $\mspan(\matr{Q}_{\ell+1:k})$, which proves the first part of the statement.
If, moreover, $s = k-1$, then the last block of eigenvectors (corresponding to eigenvalues of order $\O(\varepsilon^{2k})$) must be contained in $\mspan(\matr{Q}_{k})$.
\end{proof}

\begin{proof}[{Proof of \Cref{thm:det_nd_finite_smoothness}}]
\noindent 1.
The proof repeats that of \Cref{thm:det_1d_finite_smoothness} with the following minor modifications (in order to take into account the multivariate case):
\begin{itemize}
\item the matrix ${\matr{W}_{\ultriangle}} \in \RR^{\#\PP_{2r-2}\times \#\PP_{2r-2}}$ is defined as 
\[
{W}_{\ultriangle,\va,\vect{\beta}} = 
\begin{cases} 
 \frac{K^{(\va,\vect{\beta})}(0,0)}{\va!\vect{\beta}!}, & |\va|+|\vect{\beta}|\le2r, \\
0, & |\va|+|\vect{\beta}| > 2r;
\end{cases}
\]
i.e., in the sum \eqref{eq:sum_wronskian_diagonals} the matrices $\matr{W}_{\diagup s}$ are defined according to \eqref{eq:wronskian_diag_nd};
\item the matrix $\matr{\Delta}_k(\varepsilon)$ is defined in \eqref{eq:diag_scaling_nd}, and \Cref{lem:scaling_nd}  is used instead of \Cref{lem:scaling_1d}; 
\item the extended diagonal scaling matrix  is
\[
\widetilde{\matr{\Delta}} = \begin{bmatrix} \matr{\Delta}_{r-1} & \\
& \varepsilon^{r-1} \matr{I}_{N} \end{bmatrix} \in \RR^{n\times n},
\]
where $N = n - \#\PP_{r-1}$;
\item the last displayed formula in the proof of \Cref{thm:det_1d_finite_smoothness} becomes
\begin{equation*}
\begin{split}
& \varepsilon^{-2M(r-1,d)-2N(r-1)} \det\matr{K}_{\varepsilon} = \det(\widetilde{\matr{\Delta}}^{-1}\Qfull^{\T}\matr{K}_{\varepsilon}\Qfull\widetilde{\matr{\Delta}}^{-1}) \\
& = \varepsilon^{N} ((\det\Rthin)^2 \det \matr{W}
\det( f_{2r-1} \Qort^{\T}\matr{D}_{(2r-1)} \Qort) + \O(\varepsilon)).\\
\end{split}
\end{equation*}
\end{itemize}

\noindent 2. Again, as in the proof of  \Cref{thm:det_1d_finite_smoothness}, the matrix $\matr{Q}^{\T}_{s:k}\matr{K}_{\varepsilon}\matr{Q}_{s:k}$ has order given in \eqref{eq:block_s_finite_smoothness}, hence the  orders of the eigenvalues follow from \Cref{lem:courant_fischer}.

\noindent 3. The proof of this statement repeats the proof of  \Cref{thm:1d_finite_smoothness_ev} without changes.

\noindent 4. The last statement  follows from combining \eqref{eq:block_s_finite_smoothness} with  \Cref{lem:courant_fischer}, and proceeding as in the proof of the corresponding statement in {\Cref{thm:nd_smooth_ev}}.
\end{proof}

\section{Perturbation theory: a summary for Hermitian matrices}\label{sec:kato}
 This section contains a summary of  facts from \cite[Ch.
II]{kato1995perturbation} to deal with analytic perturbations of self-adjoint operators in a finite-dimensional vector space (i.e., Hermitian matrices).
Formally, and in keeping with the notation used in \cite{kato1995perturbation}, we assume that we are given a matrix-valued function depending on $\varepsilon$ such that
\begin{equation}
\matr{T}(\varepsilon) = \matr{T}^{(0)} + \varepsilon\matr{T}^{(1)}  + \varepsilon^2 \matr{T}^{(2)} + \cdots,
\end{equation}
where we assume that the matrices $\matr{T}^{(k)} \in \CC^{n\times n}$ are  Hermitian. 
In a neighborhood of $0$, $0 \in \mathcal{D}_0 \subset \CC$, $\matr{T}(\varepsilon)$ has  $s \le n$ semi-simple eigenvalues genericaly (i.e., except a finite number of exceptional points).
For simplicity of presentation\footnote{We can also consider the general case if needed.}, we assume that $s=n$, which is the case if there exists $\varepsilon_1\in \mathcal{D}_0$ having all distinct eigenvalues.
The interesting case (considered in this paper) is when $\varepsilon = 0$ is an exceptional point, i.e. $\matr{T}(0) =  \matr{T}^{(0)}$ has multiple eigenvalues  (e.g., a low-rank matrix with a multiple eigenvalue $0$).

\subsection{Perturbation of  eigenvalues and group eigenprojectors}
Since all matrices are Hermitian, by \cite[Theorem 1.10, Ch. II]{kato1995perturbation} (see \Cref{lem:analytic_eigenvalues}),  the eigenvalues $\lambda_1(\varepsilon), \ldots, \lambda_n(\varepsilon)$ and the   rank-one projectors 
on the corresponding eigenspaces $\matr{P}_1(\varepsilon), \ldots, \matr{P}_n(\varepsilon)$ are holomorphic functions of $\varepsilon$ in a neighborhood of $0$, $0 \in \mathcal{D} \subset \CC$.

\newpage

\begin{remark}\label{rem:splitting}
If the matrix $\matr{T}^{(0)}$ has $d$ multiple eigenvalues $\{\mu_k\}_{k=1}^d$ with multiplicities $m_1, \ldots, m_d$, i.e., after proper reordering, at $\varepsilon = 0$
\[
(\lambda_1(0), \ldots, \lambda_n(0)) = 
(\underbrace{\mu_1, \ldots,\mu_1}_{m_1\text{ times}}, \underbrace{\mu_2, \ldots,\mu_2}_{m_2\text{ times}}, \ldots,
\underbrace{\mu_d, \ldots,\mu_d}_{m_d\text{ times}})
\]
then the projectors on the invariant subspaces associated with $\mu_1,\ldots,\mu_s$ are sums of the corresponding rank-one projectors on the eigenspaces:
\begin{equation*}
\begin{split}
\matr{P}_{\mu,1} & = \matr{P}_1(0) + \ldots + \matr{P}_{m_1}(0) \\
\matr{P}_{\mu,2} & = \matr{P}_{m_1+1}(0) + \ldots + \matr{P}_{m_1+m_2}(0) \\
& \vdots \\
\matr{P}_{\mu,d} & = \matr{P}_{m_1 + \cdots + m_{d-1}+1}(0) + \ldots + \matr{P}_{n}(0) \\
\end{split}
\end{equation*}
\end{remark}
In this paper, our aim is to obtain a limiting eigenstructure at $\varepsilon = 0$.
In case of multiple  eigenvalues, this information cannot be retrieved from the spectral decompositon of $\matr{T}^{(0)}$ alone (we can only retrieve  the group projectors $\matr{P}_{\mu,j}$ from the spectral decomposition of $\matr{T}^{(0)}$).
In what follows, we look in details at perturbation expansions in order to find individual $\matr{P}_k(0)$.

As shown in \cite[Ch. II]{kato1995perturbation}, we can consider perturbations of a   possibly multiple  eigenvalue.
Let $\lambda$ be an eigenvalue of $\matr{T}^{(0)}$ of multiplicity $m$, and $\matr{P}$ is the corresponding orthogonal projector on the $m$-dimensional eigenspace.
The projector on the perturbed $m$-dimensional invariant subspace is
an analytic matrix-valued function 
\[
\matr{P}(\varepsilon) = \sum\limits_{k=0}^{\infty} \varepsilon^{k} \matr{P}^{(k)},
\]
with the coefficients given by
\begin{align*}
\matr{P}^{(0)}  =&\; \matr{P}, \quad \matr{P}^{(1)}   = -\matr{P} \matr{T}^{(1)} \matr{S} -\matr{S} \matr{T}^{(1)} \matr{P}, \\
 \matr{P}^{(k)}  =& -\sum\limits_{p=1}^k (-1)^p
                    \sum\limits_{\begin{smallmatrix}\nu_1 + \cdots + \nu_p = k
                        \\k_1 + \cdots + k_{p+1} = p \\ \nu_j \ge 1, k_j \ge
                        0\end{smallmatrix}} \matr{S}^{(k_1)} \matr{T}^{(\nu_1)}
  \matr{S}^{(k_2)} \cdots  \matr{S}^{(k_p)}\matr{T}^{(\nu_p)}
  \matr{S}^{(k_{p+1})}, \numberthis \label{eq:perturbation-series-projectors}
\end{align*}
where $\matr{S} = (\matr{T} - \lambda \matr{I})^{\dagger}$, $\matr{S}^{(0)} = -\matr{P}$ and $\matr{S}^{(k)} = \matr{S}^{k}$.

\subsection{Reduction and splitting the groups}\label{sec:perturbations_expressions}
In order to find the individual projectors of the eigenspaces corresponding to a multiple $\lambda$, and the expansion of the corresponding eigenvalues,
the following reduction (or splitting) procedure \cite[Ch. II, \S 2.3]{kato1995perturbation} can be applied, which localises the matrix to the $m$-dimensional subspace corresponding to the perturbations of $\lambda$.

We first define the eigennilpotent  matrix $\matr{D}(\varepsilon)$ as
\[
\matr{D}(\varepsilon) = (\matr{T}(\varepsilon) - \lambda \matr{I}) \matr{P}(\varepsilon) = \matr{P}(\varepsilon) (\matr{T}(\varepsilon) - \lambda \matr{I})   =  \matr{P}(\varepsilon)  (\matr{T}(\varepsilon) - \lambda \matr{I})\matr{P}(\varepsilon),
\]
which from  \cite[Ch. II, \S 2.2]{kato1995perturbation}  has an expansion
\[
\matr{D}(\varepsilon)  = 0 + \sum\limits_{k=1}^{\infty} \varepsilon^k  \widetilde{\matr{T}}^{(k)},
\]
where the expressions for $\widetilde{\matr{T}}^{(k)}$ are as follows:
\begin{align}
\widetilde{\matr{T}}^{(1)}   =& \matr{P} \matr{T}^{(1)} \matr{P}, \nonumber \\
\widetilde{\matr{T}}^{(2)}   = &\matr{P} \matr{T}^{(2)} \matr{P} - \matr{P} \matr{T}^{(1)} \matr{P} \matr{T}^{(1)} \matr{S}- \matr{P} \matr{T}^{(1)} \matr{S} \matr{T}^{(1)} \matr{P}-\matr{S} \matr{T}^{(1)} \matr{P} \matr{T}^{(1)} \matr{P}, \nonumber\\
 \widetilde{\matr{T}}^{(k)}  =& -\sum\limits_{p=1}^k (-1)^p
                                \sum\limits_{\begin{smallmatrix}\nu_1 + \cdots +
                                    \nu_p = k \\k_1 + \cdots + k_{p+1} = p-1 \\
                                    \nu_j \ge 1, k_j \ge 0\end{smallmatrix}}
  \matr{S}^{(k_1)} \matr{T}^{(\nu_1)}  \matr{S}^{(k_2)} \cdots
  \matr{S}^{(k_p)}\matr{T}^{(\nu_p)} \matr{S}^{(k_{p+1})}. \label{eq:perturbation-series-nilpotent}
\end{align}
\begin{remark}\label{rem:selfadjoint_tilde}
Note that the matrices $\widetilde{\matr{T}}^{(k)}$ are selfadjoint, which follows from the fact that for real $\varepsilon$ the matrices $\matr{T}(\varepsilon)$ and  $\matr{P}(\varepsilon)$ (and hence $\matr{D}(\varepsilon)$) are selfadjoint.
\end{remark}
Next, we define  the matrix $\widetilde{\matr{T}} (\varepsilon)$  as
\[
\widetilde{\matr{T}} (\varepsilon) = \frac{1}{\varepsilon} \matr{D}(\varepsilon) = \sum\limits_{k=0}^{\infty} \varepsilon^k  \widetilde{\matr{T}}^{(k+1)},
\]
such that $\widetilde{\matr{T}} (0) = \widetilde{\matr{T}}^{(1)}$.
Note that by \Cref{rem:selfadjoint_tilde}, all the matrices $\widetilde{\matr{T}}^{(k+1)}$ are Hermitian and all the eigenvalues of $\widetilde{\matr{T}}^{(1)} (\varepsilon)$  are holomorphic functions of $\varepsilon$.
The idea of the reduction process is to apply the perturbation theory to the matrix $\widetilde{\matr{T}}^{(1)} (\varepsilon)$.

Let $\widetilde{\matr{T}}^{(1)}$  have $s$ eigenvalues $\widetilde{\lambda}_{1}, \ldots, \widetilde{\lambda}_{s}$ with multiplicities
\[
\ell_1 + \cdots+ \ell_s = m,
\]
where we take into account only the $m$ eigenvalues\footnote{The other $n-m$ eigenvalues are $0$.} in the  subspace spanned by $\matr{P}(\varepsilon)$.

Then  $\widetilde{\lambda}_{1}, \ldots, \widetilde{\lambda}_{s}$ determine the splitting of $\lambda$ in the following way.
\begin{lemma}[A summary of {\cite[Ch. II, \S 2.3]{kato1995perturbation}}]\label{lem:splitting}
Let 
\[
\underbrace{\widetilde{\lambda}_{1,1}(\varepsilon),\ldots,\widetilde{\lambda}_{1,\ell_1}(\varepsilon)}_{\text{pert.\ of }\widetilde{\lambda}_{1}}, \ldots, 
\underbrace{\widetilde{\lambda}_{s,1}(\varepsilon),\ldots,\widetilde{\lambda}_{s,\ell_s}(\varepsilon)}_{\text{pert.\ of }\widetilde{\lambda}_{s}},
\]
be the holomorphic functions for the perturbations of the eigenvalues $\widetilde{\lambda}_{1}, \ldots, \widetilde{\lambda}_{s}$ of $\widetilde{\matr{T}} (\varepsilon)$.
Then the holomorphic functions corresponding to perturbations of  the eigenvalue $\lambda$ of the original matrix $\matr{T}(\varepsilon)$ are given by
\[
\{\lambda_1(\varepsilon), \ldots,\lambda_m(\varepsilon) \} = 
\bigcup\limits_{k=1}^s
\{\lambda + \varepsilon \widetilde{\lambda}_{k,1}(\varepsilon), \ldots, \lambda + \varepsilon \widetilde{\lambda}_{k,\ell_k}(\varepsilon)\}.
\]
Moreover, the expansions $\widetilde{\matr{P}}_{k,j}(\varepsilon)$ of the projectors on the eigenspaces of $\widetilde{\matr{T}} (\varepsilon)$ (corresponding to $\widetilde{\lambda}_{k,j}(\varepsilon)$) give the expansions of the projectors on the  eigenspaces of $\matr{T}(\varepsilon)$ corresponding to $\lambda_1(\varepsilon), \ldots,\lambda_m(\varepsilon)$.
\end{lemma}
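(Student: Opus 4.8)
The plan is to reduce the whole problem to the moving invariant subspace $\mathrm{ran}\,\matr{P}(\varepsilon)$, on which $\matr{T}(\varepsilon)$ is, up to the constant $\lambda\matr{I}$ and the scalar factor $\varepsilon$, nothing but $\widetilde{\matr{T}}(\varepsilon)$; the analytic perturbation theory already recorded in \Cref{lem:analytic_eigenvalues} can then be applied to $\widetilde{\matr{T}}(\varepsilon)$ and transported back. All the statements (the shape $\lambda+\varepsilon\widetilde{\lambda}_{k,j}(\varepsilon)$ of the split eigenvalues and the identification of the eigenprojectors) will fall out of this reduction.

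First I would record the reduction identities. Since $\matr{P}(\varepsilon)$ is the Riesz spectral projector associated with the cluster of eigenvalues of $\matr{T}(\varepsilon)$ near $\lambda$, it satisfies $\matr{P}(\varepsilon)^2=\matr{P}(\varepsilon)$ and $\matr{T}(\varepsilon)\matr{P}(\varepsilon)=\matr{P}(\varepsilon)\matr{T}(\varepsilon)$; hence $\matr{D}(\varepsilon)=(\matr{T}(\varepsilon)-\lambda\matr{I})\matr{P}(\varepsilon)$ commutes with $\matr{P}(\varepsilon)$, maps $\mathrm{ran}\,\matr{P}(\varepsilon)$ into itself, and vanishes on $\ker\matr{P}(\varepsilon)$, i.e. $\matr{D}(\varepsilon)$ and therefore $\widetilde{\matr{T}}(\varepsilon)=\varepsilon^{-1}\matr{D}(\varepsilon)$ is reducible by $\matr{P}(\varepsilon)$. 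Because $\matr{T}^{(0)}$ acts as $\lambda\matr{I}$ on $\mathrm{ran}\,\matr{P}=\mathrm{ran}\,\matr{P}(0)$, the expansion of $\matr{D}(\varepsilon)$ starts at order $\varepsilon$, so $\widetilde{\matr{T}}(\varepsilon)$ is analytic at $0$ with $\widetilde{\matr{T}}(0)=\widetilde{\matr{T}}^{(1)}$, and by \Cref{rem:selfadjoint_tilde} it is Hermitian for real $\varepsilon$.

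Next I would exploit the restriction to the invariant subspace: from $\matr{T}(\varepsilon)\matr{P}(\varepsilon)=\lambda\matr{P}(\varepsilon)+\matr{D}(\varepsilon)=\lambda\matr{P}(\varepsilon)+\varepsilon\widetilde{\matr{T}}(\varepsilon)$ we see that on $\mathrm{ran}\,\matr{P}(\varepsilon)$ the operator $\matr{T}(\varepsilon)$ equals $\lambda\,\mathrm{Id}+\varepsilon\,\widetilde{\matr{T}}(\varepsilon)|_{\mathrm{ran}\,\matr{P}(\varepsilon)}$. Since an affine rescaling $A\mapsto\lambda\matr{I}+\varepsilon A$ changes neither eigenspaces nor eigenprojectors, the $m$ eigenvalues of $\matr{T}(\varepsilon)$ issuing from $\lambda$ are exactly $\lambda+\varepsilon\,\nu(\varepsilon)$ with $\nu(\varepsilon)$ running over the eigenvalues of $\widetilde{\matr{T}}(\varepsilon)$ carried by $\mathrm{ran}\,\matr{P}(\varepsilon)$, and the associated eigenprojectors of $\matr{T}(\varepsilon)$ coincide with those of $\widetilde{\matr{T}}(\varepsilon)$. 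Applying \Cref{lem:analytic_eigenvalues} to the Hermitian analytic family $\widetilde{\matr{T}}(\varepsilon)$, its eigenvalues and rank-one eigenprojectors are holomorphic near $0$; those carried by $\mathrm{ran}\,\matr{P}(\varepsilon)$ are, at $\varepsilon=0$, the eigenvalues $\widetilde{\lambda}_1,\dots,\widetilde{\lambda}_s$ of $\widetilde{\matr{T}}^{(1)}$ with multiplicities $\ell_1,\dots,\ell_s$, so after relabelling they are precisely the holomorphic functions $\widetilde{\lambda}_{k,j}(\varepsilon)$ grouped as perturbations of $\widetilde{\lambda}_k$, with holomorphic rank-one projectors $\widetilde{\matr{P}}_{k,j}(\varepsilon)$. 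Substituting $\nu=\widetilde{\lambda}_{k,j}(\varepsilon)$ yields $\{\lambda_1(\varepsilon),\dots,\lambda_m(\varepsilon)\}=\bigcup_{k=1}^{s}\{\lambda+\varepsilon\widetilde{\lambda}_{k,1}(\varepsilon),\dots,\lambda+\varepsilon\widetilde{\lambda}_{k,\ell_k}(\varepsilon)\}$ and identifies the eigenprojectors of $\matr{T}(\varepsilon)$ on this cluster with the $\widetilde{\matr{P}}_{k,j}(\varepsilon)$, which is the assertion.

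The one point that genuinely needs care — and which is exactly the content of \cite[Ch.~II, \S 2.3 and \S 4]{kato1995perturbation} that I would cite — is that $\mathrm{ran}\,\matr{P}(\varepsilon)$ is a moving subspace, so the phrase ``eigenvalues/eigenprojectors of $\widetilde{\matr{T}}(\varepsilon)$ carried by $\mathrm{ran}\,\matr{P}(\varepsilon)$'' must be given a clean analytic meaning. The standard device is the unitary transformation function $\matr{U}(\varepsilon)$, analytic with $\matr{U}(0)=\matr{I}$ and $\matr{U}(\varepsilon)\matr{P}\matr{U}(\varepsilon)^{-1}=\matr{P}(\varepsilon)$: conjugating $\widetilde{\matr{T}}(\varepsilon)$ by $\matr{U}(\varepsilon)$ gives an analytic Hermitian family reducible by the \emph{fixed} projector $\matr{P}$, to which \Cref{lem:analytic_eigenvalues} applies verbatim on the fixed $m$-dimensional space, and conjugating back transports the holomorphic eigenvalues and projectors to the genuine objects above. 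This bookkeeping, rather than any hard estimate, is the main obstacle; it is routine once the transformation function is invoked, and for the detailed verification I would simply refer to \cite[Ch.~II]{kato1995perturbation}.
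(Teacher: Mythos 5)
Your argument is correct and is essentially the same reduction that the paper relies on: the paper states this lemma only as a summary of \cite[Ch.~II, \S 2.3]{kato1995perturbation} without an independent proof, and your sketch (restricting $\matr{T}(\varepsilon)=\lambda\matr{I}+\varepsilon\widetilde{\matr{T}}(\varepsilon)$ to $\mathrm{ran}\,\matr{P}(\varepsilon)$, applying \Cref{lem:analytic_eigenvalues} to the Hermitian analytic family $\widetilde{\matr{T}}(\varepsilon)$, and transporting back) is precisely Kato's reduction process. Your use of the transformation function $\matr{U}(\varepsilon)$ to fix the moving subspace correctly handles the only delicate point, namely identifying which holomorphic eigenvalue branches and rank-one projectors of $\widetilde{\matr{T}}(\varepsilon)$ are carried by $\mathrm{ran}\,\matr{P}(\varepsilon)$ rather than by its kernel.
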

\Cref{lem:splitting} can be applied recursively: for each individual eigenvalue  $\widetilde{\lambda}_{k}$ (of multiplicity $\ell_k>1$) we  can  consider the corresponding reduced matrix
\[
 \tildeN{2}{\matr{T}}(\varepsilon) = 
\frac{1}{\varepsilon}   (\widetilde{\matr{T}}^{(1)}(\varepsilon) - \widetilde{\lambda}_{k} \matr{I})  \widetilde{\matr{P}}_k(\varepsilon),
\]
where $\widetilde{\matr{P}}_k(\varepsilon)$ is the perturbation of the total projection on the $\ell_k$-dimensional eigenspace corresponding to $\widetilde{\lambda}_{k}$ (which can be computed as in the previous subsection).
Depending on the eigenvalues of the main term of the reduced matrix, either the splitting will occur again, or there will be no splitting; in any case, after a finite number of steps, all the individual eigenvalues will be split into simple (multiplicity $1$) eigenvalues\footnote{This follows from our assumption that the eigenvalues are simple generically.}.

\section{Results on eigenvectors for finitely smooth kernels}\label{sec:eigenvectors}
In this section we are going to prove the following result for the multivariate case.
\begin{theorem}
  \label{thm:finite_smoothness_eigenvectors}
Let the radial kernel be as in \Cref{thm:det_nd_finite_smoothness},  with $\matr{K}_{\varepsilon}$ positive semidefinite on $[0,\varepsilon_0], \varepsilon_0>0$ and analytic in $\varepsilon$. 
If $\det \Wronsk{r-1} \neq 0$, $\matr{V}_{\le r-1}$ is full rank, and $\Qort^{\T} \bD_{(2r-1)}  \Qort$ is invertible,
then the eigenvectors  corresponding to the last group of $\O(\varepsilon^{2r-1})$ eigenvalues,  are given by the columns of 
\[
\Qort \matr{U},
\]
where the columns of $\matr{U}$ are eigenvectors of $\Qort^{\T}\matr{D}_{(2r-1)} \Qort$,
and the matrix $\Qort$ comes from the full QR factorization \eqref{eq:qr_full} of $\matr{V}_{\le r-1}$ (as in \Cref{thm:det_nd_finite_smoothness}).
\end{theorem}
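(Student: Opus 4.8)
The plan is to combine the analytic perturbation theory of \Cref{sec:kato} with the expansion of $\matr{K}_{\varepsilon}$ already used in the proof of \Cref{thm:det_nd_finite_smoothness}. By \Cref{lem:analytic_eigenvalues} choose eigenvalues $\lambda_k(\varepsilon)$ and rank-one orthogonal eigenprojectors $\matr{P}_k(\varepsilon) = \vect{p}_k(\varepsilon)\vect{p}_k(\varepsilon)^{\T}$ analytic near $0$; by \Cref{thm:det_nd_finite_smoothness} the last group consists of $N$ eigenvalues $\lambda_k(\varepsilon) = \varepsilon^{2r-1}(\widetilde{\lambda}_k + \O(\varepsilon))$ with $\widetilde{\lambda}_k \neq 0$ (they are the eigenvalues of $f_{2r-1}\Qort^{\T}\bD_{(2r-1)}\Qort$, invertible by assumption), and the limiting eigenvectors $\vect{p}_k \eqdef \vect{p}_k(0)$ lie in $\mspan(\Qort)$. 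Let $\matr{P}_{\bullet}(\varepsilon)$ be the analytic total projector of the last group, $\matr{P}_{\bullet}(\varepsilon) = \sum_{k \in \mathrm{last}}\matr{P}_k(\varepsilon)$, so that $\matr{P}_{\bullet}(0) = \matr{P}_{\perp}$, the orthogonal projector onto $\mspan(\Qort)$. Since $\matr{P}_{\bullet}(\varepsilon)$ commutes with $\matr{K}_{\varepsilon}$ and $\matr{K}_{\varepsilon}\matr{P}_{\bullet}(\varepsilon) = \sum_{k \in \mathrm{last}}\lambda_k(\varepsilon)\matr{P}_k(\varepsilon)$, the limit
\[
  \matr{L} \eqdef \lim_{\varepsilon \to 0}\varepsilon^{-(2r-1)}\matr{P}_{\bullet}(\varepsilon)\matr{K}_{\varepsilon}\matr{P}_{\bullet}(\varepsilon) = \sum_{k \in \mathrm{last}}\widetilde{\lambda}_k\,\vect{p}_k\vect{p}_k^{\T}
\]
exists, is symmetric, is supported on $\mspan(\Qort)$, and has the $\vect{p}_k$ as its nonzero eigenvectors. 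It therefore suffices to prove $\matr{L} = f_{2r-1}\matr{P}_{\perp}\bD_{(2r-1)}\matr{P}_{\perp} = \Qort\bigl(f_{2r-1}\Qort^{\T}\bD_{(2r-1)}\Qort\bigr)\Qort^{\T}$: the isometry $\vect{u}\mapsto\Qort\vect{u}$ then identifies the nonzero eigenvectors of $\matr{L}$ with the columns of $\Qort\matr{U}$ for $\matr{U}$ an eigenbasis of $\Qort^{\T}\bD_{(2r-1)}\Qort$ (the nonzero scalar $f_{2r-1}$ playing no role), which is exactly the claim.

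To identify $\matr{L}$, write $\matr{P}_{\bullet}(\varepsilon)\Qort = \Qort + \varepsilon\matr{G}(\varepsilon)$ with $\matr{G}$ analytic, so that $\Qort^{\T}\matr{L}\Qort = \lim_{\varepsilon\to 0}\varepsilon^{-(2r-1)}(\Qort+\varepsilon\matr{G})^{\T}\matr{K}_{\varepsilon}(\Qort+\varepsilon\matr{G})$. The leading piece $\varepsilon^{-(2r-1)}\Qort^{\T}\matr{K}_{\varepsilon}\Qort \to f_{2r-1}\Qort^{\T}\bD_{(2r-1)}\Qort$ is precisely the identity $\Qort^{\T}\matr{K}_{\varepsilon}\Qort = \varepsilon^{2r-1}(f_{2r-1}\Qort^{\T}\bD_{(2r-1)}\Qort + \O(\varepsilon))$ obtained inside the proof of \Cref{thm:det_nd_finite_smoothness}; what is left is to show the cross terms $\varepsilon\Qort^{\T}\matr{K}_{\varepsilon}\matr{G}$, its transpose, and $\varepsilon^{2}\matr{G}^{\T}\matr{K}_{\varepsilon}\matr{G}$ are $o(\varepsilon^{2r-1})$. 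This rests on two structural facts. First, expanding $\matr{K}_{\varepsilon}$ by \eqref{eq:fs_kernel_wronskian_diag} and using $\Qort^{\T}\matr{V}_{\le r-1}=0$ together with the block-antidiagonal form of $\matr{W}_{\diagup 2\ell}$ (which forces the two Vandermonde factors of $\bD_{(2\ell)}$ to carry total degree at least $2r$ whenever both are hit by $\Qort^{\T}$), one gets $\Qort^{\T}\bD_{(2\ell)} = \matr{A}_{\ell}\matr{V}_{\le 2\ell-r}^{\T}$ for suitable $\matr{A}_{\ell}$, hence $\Qort^{\T}\matr{K}_{\varepsilon} = \sum_{\ell=\lceil r/2\rceil}^{r-1}\varepsilon^{2\ell}f_{2\ell}\,\matr{A}_{\ell}\matr{V}_{\le 2\ell-r}^{\T} + \O(\varepsilon^{2r-1})$, whose rows all lie in the column span of $\matr{V}_{\le r-2}$ (as $2\ell-r\le r-2$). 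Second, the columns of $\matr{P}_{\bullet}(\varepsilon)\Qort$ approach $\mspan(\matr{V}_{\le r-1})^{\perp}$ fast, in the precise sense $\matr{V}_{\le s}^{\T}\matr{P}_{\bullet}(\varepsilon)\Qort = \O(\varepsilon^{r-s})$ for $0\le s\le r-1$, which follows from the block orders of the rotated kernel matrix (cf.\ \eqref{eq:QtKQ_finite_smoothness}) and the standard estimate ``rotation of an invariant subspace is controlled by the off-diagonal coupling divided by the spectral gap''. A short count then shows that every cross-term contribution of total order $\varepsilon^{2r-1}$ pairs a factor $\varepsilon^{2\ell}f_{2\ell}\matr{A}_{\ell}\matr{V}_{\le 2\ell-r}^{\T}$ with a coefficient of $\matr{G}$ of some order $b$ with $(2\ell-r)+b\le r-2$, so it vanishes by the leakage estimate; the $\varepsilon^{2}\matr{G}^{\T}\matr{K}_{\varepsilon}\matr{G}$ term is handled the same way, using in addition $\matr{K}_{0}=K(0,0)\vect{1}\vect{1}^{\T}$. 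Thus $\Qort^{\T}\matr{L}\Qort = f_{2r-1}\Qort^{\T}\bD_{(2r-1)}\Qort$, and the proof concludes.

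The step I expect to be the main obstacle is exactly the leakage estimate $\matr{V}_{\le s}^{\T}\matr{P}_{\bullet}(\varepsilon)\Qort = \O(\varepsilon^{r-s})$ and the bookkeeping of which cross-term contributions survive at order $\varepsilon^{2r-1}$; this is where the reduction/splitting machinery of \Cref{sec:kato} is genuinely needed. An equivalent, and perhaps cleaner, route to the same conclusion is to apply \Cref{lem:splitting} recursively: reduce the zero eigenvalue of $\matr{K}_{\varepsilon}$ repeatedly, noting that in \eqref{eq:perturbation-series-nilpotent} the reduced matrix $\widetilde{\matr{T}}^{(k)}$ vanishes for every odd $k<2r-1$ (a product of an odd number of $\matr{T}^{(\nu)}$'s summing to $k$ must contain some $\matr{T}^{(\nu)}$ with $\nu<2r-1$ odd, which is zero for a radial kernel of smoothness $r$), so the successive splittings occur only at the even levels $2,4,\dots,2(r-1)$, peeling off the groups $\O(\varepsilon^{2}),\dots,\O(\varepsilon^{2(r-1)})$; the $(2r-1)$-st reduced matrix then corresponds to the last group, and using the rank-one reduced resolvent $\matr{S}=\matr{K}_{0}^{\dagger}=(n^{2}K(0,0))^{-1}\vect{1}\vect{1}^{\T}$ at the first step to kill the non-principal terms of \eqref{eq:perturbation-series-nilpotent} one identifies its limiting value as $f_{2r-1}\matr{P}_{\perp}\bD_{(2r-1)}\matr{P}_{\perp}$, whereupon \Cref{lem:splitting} transfers its eigenprojectors to the limiting eigenprojectors of $\matr{K}_{\varepsilon}$. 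As usual, if $\Qort^{\T}\bD_{(2r-1)}\Qort$ has repeated eigenvalues the individual limiting eigenvectors within a degenerate block are pinned down only up to a rotation inside that block, which is the content of the phrasing ``columns of $\Qort\matr{U}$'' in the statement.
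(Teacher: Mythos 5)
Your primary route (working with the total projector $\matr{P}_{\bullet}(\varepsilon)$ of the last group and identifying $\matr{L}=\lim\varepsilon^{-(2r-1)}\matr{P}_{\bullet}\matr{K}_{\varepsilon}\matr{P}_{\bullet}$) is organized correctly, but it stands or falls with the leakage estimate $\matr{V}_{\le s}^{\T}\matr{P}_{\bullet}(\varepsilon)\Qort=\O(\varepsilon^{r-s})$, and that is exactly the step you do not prove. A fixed-gap ``coupling over gap'' bound does not apply here: the gap separating the last group from the $\O(\varepsilon^{2s})$ group is itself of order $\varepsilon^{2s}$, while the coupling block $\matr{Q}_s^{\T}\matr{K}_{\varepsilon}\Qort$ is only $\O(\varepsilon^{s+r})$ (cf.\ \eqref{eq:QtKQ_finite_smoothness}), so what you need is a simultaneous statement about several interacting groups whose gaps shrink at different rates. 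Making that rigorous amounts to performing the recursive reduction of \Cref{sec:kato} anyway, as you yourself acknowledge; so, as written, the first route is not a proof but a reduction of the theorem to an unproved (and not at all routine) estimate.

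Your ``alternative'' route is essentially the paper's proof: rotate the kernel matrix, apply Kato's reduction recursively, observe that the odd-order reduced matrices vanish, and identify the $(2r-1)$-st reduced matrix as $f_{2r-1}$ times the compression of $\bD_{(2r-1)}$ onto $\mspan(\Qort)$, after which \Cref{lem:splitting} transfers the eigenprojectors. What your sketch omits is the device that makes the induction work: the paper conjugates by the $\matr{Q}$ factor of the QR factorization of $\matr{V}_{\le 2r-2}$ and shows that the coefficients $\matr{T}_s$, $s<2r-1$, lie in the block-staircase classes $\stair{s}$, a structure preserved through the reduction formulas \eqref{eq:perturbation-series-nilpotent} (\Cref{lem:staircase}). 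This is what yields $\widetilde{\matr{T}}_1=\matr{P}_0\matr{T}_1\matr{P}_0=0$, forces the even-level reduced matrices into the block form \eqref{eq:ev_reduction_T2s}, and identifies the intermediate null-space projectors as the coordinate projectors \eqref{eq:ev_reduction_P2s} (together with the group information from \Cref{thm:det_nd_finite_smoothness}). Your parity argument (``some $\nu_j$ is odd, hence the factor vanishes'') is valid verbatim only at the first reduction, where the building blocks are the original coefficients $\matr{T}_{\nu}$; at later levels the building blocks are the previously reduced matrices, and without tracking their staircase/block structure you can neither repeat the parity argument nor conclude that nothing except $f_{2r-1}\matr{P}_{2r-2}\matr{T}_{2r-1}\matr{P}_{2r-2}$ survives at order $2r-1$, i.e.\ that the limiting matrix for the last group is exactly $f_{2r-1}\Qort\Qort^{\T}\bD_{(2r-1)}\Qort\Qort^{\T}$. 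So the idea is the right one, but the decisive bookkeeping — the content of the paper's Section on block staircase matrices — is missing from both of your routes.
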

We  conjecture that for finitely-smooth kernels as well, the individual eigenvectors for the  $\O(\varepsilon^{2s})$ groups, $0 \le s \le r-1$, can be obtained as in \Cref{conj:eigenvectors}.

\subsection{Block staircase matrices}
\label{sec:block-staircase}
We first need some facts about  a class of so-called block staircase  matrices. 
Let $\vect{n} = (n_0, \ldots, n_s)\in \mathbb{N}^{s+1}$ such that
\[
n = n_0 + \cdots + n_s
\]
and consider the following block partition of a matrix $\matr{A} \in \CC^{m \times m}$
\[
\matr{A}=
\begin{bmatrix}
\matr{A}^{(0,0)} & \cdots & \matr{A}^{(0,s)} \\
\vdots &   &\vdots \\
\matr{A}^{(s,0)} & \cdots & \matr{A}^{(s,s)} \\
\end{bmatrix}
\]
where the blocks are of size $\matr{A}^{(k,l)} \in \CC^{n\times n}$.
Assuming that the partition is fixed, we define the classes $\stair{p}$ of ``staircase'' matrices 
\[
\{\matr{0}\} =\stair{-1} \subset \stair{0} \subset \stair{1} \subset\stair{2} \subset \cdots \subset \stair{2s-1}  = \stair{2s} = \stair{2s+1} = \cdots = \CC^{n\times n}, 
\]
such that the matrices in  $\mathscr{S}_p$ have nonzero blocks only up to the $s$-th antidiagonal
\[
\mathscr{S}_p = \{ \matr{A} \in \CC^{m\times n} \,|\, \matr{A}^{(k,l)} = 0 \mbox{ for all } k + l > p \}.
\]
In \Cref{fig:staircase}, we illustrate the classes for $s=2$:

\begin{figure}[t!]
$\stair{0} = 
\left\{\raisebox{-0.42\height}{\begin{tikzpicture}
\path [fill = lightgray] (0,0.9) --(0.3,0.9) -- (0.3,0.6) -- (0.0,0.6) --(0,0.9);
\draw (0,0) rectangle (0.9,0.9); %node {A};
\end{tikzpicture}}\right\},
\;
\stair{1} = 
\left\{\raisebox{-0.42\height}{\begin{tikzpicture}
\path [fill = lightgray] (0,0.9) --(0.6,0.9) -- (0.6,0.6) --(0.3,0.6) --(0.3,0.3) --  (0.0,0.3) --(0,0.9);
\draw (0,0) rectangle (0.9,0.9); %node {A};
\end{tikzpicture}}\right\},
\;
\stair{2} = 
\left\{\raisebox{-0.42\height}{\begin{tikzpicture}
\path [fill = lightgray] (0,0.9) --(0.9,0.9) -- (0.9,0.6) --(0.6,0.6) -- (0.6,0.3) --(0.3,0.3) --(0.3,0.0) --  (0.0,0.0) --(0,0.9);
\draw (0,0) rectangle (0.9,0.9); %node {A};
\end{tikzpicture}}\right\},
\;
\stair{3} = 
\left\{\raisebox{-0.42\height}{\begin{tikzpicture}
\path [fill = lightgray] (0,0.9) --(0.9,0.9) -- (0.9,0.3) --(0.6,0.3) --(0.6,0.0) --  (0.0,0.0) --(0,0.9);
\draw (0,0) rectangle (0.9,0.9); %node {A};
\end{tikzpicture}}\right\},
\;
\stair{4} = 
\left\{\raisebox{-0.42\height}{\begin{tikzpicture}
\path [fill = lightgray] (0,0.9) --(0.9,0.9) -- (0.9,0.0) -- (0.0,0.0) --(0,0.9);
\draw (0,0) rectangle (0.9,0.9); %node {A};
\end{tikzpicture}}\right\}.$
\caption{Classes of staircase matrices. White color stands for zero blocks.}\label{fig:staircase}
\end{figure}

The following obvious property will be useful.
\begin{lemma}\label{lem:staircase}
\begin{enumerate}
\item $\matr{A} \in \stair{p},  \matr{B} \in \stair{q} \Rightarrow \matr{A}\matr{B} \in  \stair{p+q}$.
\item For any  $\matr{A} \in \stair{p}$  and upper triangular $\matr{R}$, it follows that $\matr{R}\matr{A}\matr{R}^{\T} \in \stair{p}$.
\item For any  $\matr{A} \in \stair{p}$ and block-diagonal matrix $\matr{\Lambda} $ it holds that $\matr{\Lambda} \matr{A},  \matr{A} \matr{\Lambda}  \in  \stair{p}$.
\end{enumerate} 
\end{lemma}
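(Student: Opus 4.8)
The plan is a direct verification at the level of blocks, using only the definition that $\matr{A}\in\stair{p}$ precisely when $\matr{A}^{(k,l)}=\matr{0}$ for all $k+l>p$ (indices ranging over $\{0,\ldots,s\}$), together with the block multiplication rule $(\matr{A}\matr{B})^{(k,l)}=\sum_{j=0}^{s}\matr{A}^{(k,j)}\matr{B}^{(j,l)}$. I would also record the elementary reduction that an entrywise upper triangular matrix is upper \emph{block}-triangular with respect to the fixed partition (the partition consists of contiguous index ranges), i.e.\ $\matr{R}^{(k,l)}=\matr{0}$ for $k>l$; this is all that part (2) actually uses, so the statement holds verbatim for upper block-triangular $\matr{R}$.

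For (1): if $k+l>p+q$, every summand $\matr{A}^{(k,j)}\matr{B}^{(j,l)}$ of $(\matr{A}\matr{B})^{(k,l)}$ vanishes, since a nonzero summand would force $k+j\le p$ and $j+l\le q$, hence $k+l\le p+q-2j\le p+q$, a contradiction; thus $\matr{A}\matr{B}\in\stair{p+q}$. For (2): in $(\matr{R}\matr{A})^{(k,l)}=\sum_{j\ge k}\matr{R}^{(k,j)}\matr{A}^{(j,l)}$, if $k+l>p$ then $j+l\ge k+l>p$ for every $j$ appearing, so $\matr{A}^{(j,l)}=\matr{0}$ and $\matr{R}\matr{A}\in\stair{p}$; dually $(\matr{R}^{\T})^{(j,l)}=(\matr{R}^{(l,j)})^{\T}=\matr{0}$ unless $j\ge l$, so $(\matr{A}\matr{R}^{\T})^{(k,l)}=\sum_{j\ge l}\matr{A}^{(k,j)}(\matr{R}^{(l,j)})^{\T}$ is zero once $k+l>p$ (then $k+j\ge k+l>p$, so $\matr{A}^{(k,j)}=\matr{0}$), giving $\matr{A}\matr{R}^{\T}\in\stair{p}$; composing the two yields $\matr{R}\matr{A}\matr{R}^{\T}\in\stair{p}$. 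For (3): a block-diagonal $\matr{\Lambda}$ is in particular both upper and lower block-triangular, so $\matr{\Lambda}\matr{A},\matr{A}\matr{\Lambda}\in\stair{p}$ is the one-sided case of (2); one may also just note $(\matr{\Lambda}\matr{A})^{(k,l)}=\matr{\Lambda}^{(k,k)}\matr{A}^{(k,l)}$ and $(\matr{A}\matr{\Lambda})^{(k,l)}=\matr{A}^{(k,l)}\matr{\Lambda}^{(l,l)}$, each vanishing wherever $\matr{A}^{(k,l)}$ does.

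There is no genuine obstacle here: the entire argument is the bookkeeping of the two index inequalities above, and the only point worth a sentence is the reduction of ``upper triangular'' to ``upper block-triangular.'' I would close by remarking that these three closure rules are exactly what is invoked later when the scaled kernel matrix, once written in block-staircase form, is conjugated by the $\matr{Q}$ factor of a Vandermonde QR factorization and by the block-diagonal scaling matrices $\matr{\Delta}_k(\varepsilon)$.
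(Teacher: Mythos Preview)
Your proof is correct and is precisely the ``straightforward verification'' that the paper invokes without spelling out; the only argument is the block-level bookkeeping you carry through, and your remark that an entrywise upper triangular matrix is upper block-triangular for a contiguous partition is the one small observation needed to make part~(2) go. There is nothing to add or change.
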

\begin{proof}
 The proof follows from straighforward verification. 
\end{proof}

\subsection{Proof of \Cref{thm:finite_smoothness_eigenvectors}}
\label{sec:proof-eigenvec-finite-smoothness}
\begin{proof}
The kernel matrix has an expansion with only even powers of $\varepsilon$ until $2r-2$
\[
\matr{K}_{\varepsilon} =  \sum_{j=0}^{r-1} \varepsilon^{2j} \matr{V}_{\le 2j} \matr{W}_{\diagup 2j} \matr{V}^{\T}_{\le 2j} +
\varepsilon^{2r-1} f_{2r-1} \matr{D}_{(2r-1)}+ 
 \O(\varepsilon^{2r}),
\]
since odd block diagonals $\matr{W}_{\diagup 2j-1}$, $j < r$ vanish.
We look at the transformed matrix
\[
\matr{T}({\varepsilon}) = \Qfull^{\T} \matr{K}_{\varepsilon}\Qfull  = \matr{T}_{0} + \varepsilon \matr{T}_{1} + \cdots +  \varepsilon^{2r-1} \matr{T}_{2r-1} + \O(\varepsilon^{2r}), 
\]
where $\Qfull$ is the matrix of the full QR decomposition of $\matr{V}_{\le 2r-2}$
\[
\matr{T}_{s} =
\begin{cases}  
 \Qfull^{\T} \matr{V}_{\le s} \matr{W}_{\diagup s} \matr{V}^{\T}_{\le s} \Qfull, & s \text{ even}, s < 2r-1, \\
0, & s \text{ odd}, s < 2r-1, \\
f_{2r-1} \Qfull^{\T} \matr{D}_{(2r-1)} \Qfull, &  s = 2r-1. \\
\end{cases}
\]
Due to the fact that that $\matr{W}_{\diagup s}$ is block antidiagonal, $\Qfull^{\T} \matr{V}_{\le s}$ is upper triangular and by \Cref{lem:staircase}, we have that $\matr{T}_{s}$ is block staircase for $s < 2r-1$, i.e. $\matr{T}_{\diagup s} \in \stair{s}$.

We proceed by a series of Kato's reductions, according to \Cref{lem:splitting}.
At each order of $\varepsilon$,  a multiple eigenvalue $0$ is split into a group of nonzero eigenvalues and  eigenvalue $0$ of smaller multiplicity. % (see an example of the splitting process in \eqref{eq:splitting_2d}).
Formally, we  consider a sequence of reduced matrices 
\[
\tildeN{\!\!\!\!\! s+1}{\matr{T}} (\varepsilon) = \frac{1}{\varepsilon}  \tildeN{s}{\matr{T}} (\varepsilon) \matr{P}_s(\varepsilon), \quad \tildeN{0}{\matr{T}} (\varepsilon) = \matr{T}(\varepsilon)
\]
where $\matr{P}_k(\varepsilon)$ is the projector onto the perturbation of the  nullspace of $\tildeN{s}{\matr{T}} (0)$ (i.e., the invariant subspace associated with the eigenvalue $0$).
Its power series expansion 
\[
 \tildeN{s}{\matr{T}} (\varepsilon) = \sum\limits_{j=s}^{\infty} \varepsilon^{j-s} \cdot \tildeN{s}{\matr{T}}_{\!\!j}
\]
 can be computed according to \eqref{eq:perturbation-series-nilpotent}.
For each matrix we will be interested only in the first $2r-s$ terms,
as summarized below,
\[
\begin{array}{ccccccc}
1 & \varepsilon & \varepsilon^2 &  \cdots&\varepsilon^{2r-1}  \\\hline
\matr{T}_0 &  & &  & & & \\ 
\matr{T}_1 & \widetilde{\matr{T}}_{1} & &  & \\ 
\matr{T}_2 & \widetilde{\matr{T}}_{2} & \tildeN{2}{\matr{T}}_{\!\!2}   && \\ 
\vdots &   &    &\ddots & \\ 
\matr{T}_{2r-1} & \widetilde{\matr{T}}_{2r-1} & \tildeN{2}{\matr{T}}_{\!\!2r-1}   && \tildeN{\!\!\!\!\!2r-1}{\matr{T}}_{\!\!2r-1}     \end{array}
\]
since we are interested only in the terms
\[
\matr{T}_0 , \widetilde{\matr{T}}_{1}, \tildeN{2}{\matr{T}}_{\!\!2}, \ldots, \tildeN{\!\!\!2k}{\matr{T}}_{\!\!2k},
\]
whose eigenvectors give limiting eigenvectors for the original matrix $\matr{T}(\varepsilon)$.

Next, we will look in detail at the form of  coefficients of the reduced  matrices.
The projector on the image space of $\matr{T}_0$ is $\matr{\Pi}_0 = \vect{e}_0 \vect{e}_0 ^{\T}$ (where $\vect{e}_0 = (1,0,\ldots,0)^{\T}$), the projector on the nullspace is 
\[
\matr{P}_0 = 
\begin{bmatrix}
0  &\\
& \matr{I}_{n-1} \\
\end{bmatrix},
\]
and the matrix $\matr{S} = \matr{S}_0$ in \eqref{eq:perturbation-series-projectors}.
By examining the terms in \eqref{eq:perturbation-series-projectors}, we have that the coefficients of the reduced matrices preserve the staircase class, i.e. $\widetilde{\matr{T}}_s \in \stair{s}$.
This can be seen by verifying that if $\matr{A}_s \in \stair{s}$ and $\matr{S}^{(j)}$ are diagonal, then the products
\begin{equation}\label{eq:pert_expansion_one_term}
\matr{S}^{(k_1)} \matr{A}^{(\nu_1)}  \matr{S}^{(k_2)} \cdots  \matr{S}^{(k_p)}\matr{A}^{(\nu_p)} \matr{S}^{(k_{p+1})} \in \stair{s}
\end{equation}
if  $\nu_1 + \cdots + \nu_{p} = s$.
Next, we note that since ${\matr{T}}_1 \in \stair{1}$, then we have
\[
\widetilde{\matr{T}}_1 = \matr{P}_0 {\matr{T}}_1 \matr{P}_0 = 0.
\]
Hence we have that $\matr{P}_1 = \matr{I}$ and $\matr{S}_1 = 0$, and the second step of reduction does not change the matrices, i.e. $\tildeN{2}{\matr{T}}_{\!\!s} = \widetilde{\matr{T}}_{s}$.

Proceeding by induction,  at the step of reduction $(2s-2) \to (2s-1)$ the staircase order of the matrices is preserved due to \eqref{eq:pert_expansion_one_term}, and block diagonality of $\matr{P}_{2s-1}$ and $ \tildeN{\!\!\!\!\!2s-2}{\matr{T}}_{\!\!2s-2}$.
Since the reduction step $(2s-1) \to 2s$ does not change anything, we

\noindent get
\begin{equation}\label{eq:ev_reduction_P2s}
\matr{P}_{2s} = 
\begin{bmatrix}
0_{(\# \PP_{s}) \times (\# \PP_{s})} &  \\
 &\matr{I}_{(n - \# \PP_{s}) \times (n -\# \PP_{s})}\end{bmatrix},
 \end{equation}
which we know from  \Cref{thm:det_nd_finite_smoothness}, and 
\begin{equation}\label{eq:ev_reduction_T2s}
\tildeN{\!\!\!2s}{\matr{T}}_{\!\!2s}  = 
\begin{bmatrix}
 0_{(\# \PP_{s}) \times (\# \PP_{s})} &  & \\
 &*_{(\# \HH_{s+1}) \times (\# \HH_{s+1})}& \\
&& 0_{(n - \# \PP_{s+1}) \times (n -\# \PP_{s+1})}
\end{bmatrix},
 \end{equation}
The last reduction step is different, as we get $\tildeN{\!\!\!\!\!2r-1}{\matr{T}}_{\!\!2r-1}$ which is not equal to zero.
In order to obtain $\tildeN{\!\!\!\!\!2r-1}{\matr{T}}_{\!\!2r-1}$, we note the following: at the first step of the reduction  the matrices $\widetilde{\matr{T}}_{2j-1}$ defined by \eqref{eq:perturbation-series-nilpotent}, are the sums of the terms running over multi-indices
\[ 
\nu_1 + \cdots +  \nu_p = 2j-1,
\]
where at least one of $\nu_\ell$ should be odd and all $\nu_\ell \le 2j-1$. 
Therefore, we have  $\widetilde{\matr{T}}_{2j-1} = 0$ if $j < r$  and $\widetilde{\matr{T}}_{2r-1} = \matr{P}{\matr{T}}_{2r-1}\matr{P}$.
Proceeding by induction, we get that 
\[
\tildeN{\!\!\!\!\!s}{\matr{T}}_{2j-1} = 
\begin{cases}
0, & j < r \mbox{ and } s < 2r-1,\\
f_{2r-1}\matr{P}_{2r-2}\matr{T}_{2r-1}  \matr{P}_{2r-2}, & j = r \mbox{ and } s = 2r-1, \\
\end{cases}
\]
where $\matr{P}_{2r-2}$ is defined in \eqref{eq:ev_reduction_P2s}.

Thus we have that the limiting eigenvectors of $\Qfull\matr{T}_{\varepsilon}\Qfull^{\T}$  for the order $\O(\varepsilon^{2r-1})$ are the limiting eigenvectors (corresponding to non-zero eigenvalues) of the matrix 
\[ \Qort\Qort^{\T} \matr{D}_{(2r-1)} \Qort\Qort^{\T}, \]
where $\Qfull = \begin{bmatrix}\Qthin & \Qort\end{bmatrix}$ is the splitting
of $\Qfull$ such that $\Qort \in \RR^{n \times (n-\#\PP_{r-1})}$. 
\end{proof}

\section{Discussion}
\label{sec:discussion}

We have shown that kernel matrices become tractable in the flat limit, and
exhibit deep ties to orthogonal polynomials. We would like to add some remarks
and highlight some open problems. 

First, we expect our analysis to generalise in a mostly straightforward manner
to the ``continuous'' case, i.e., to kernel integral operators. This should make
it possible to examine a double asymptotic, in which $n \rightarrow \infty$ as
$\varepsilon \rightarrow 0$. One could then leverage recent  results on
the asymptotics of orthogonal polynomials, for instance
\cite{KrooLubinsky:ChristoffelFuncs}.

Second, our results may be used empirically to create preconditioners for kernel
matrices. There is already a vast literature on approximate kernel methods,
including in the flat limit (e.g., \cite{fornberg2011stable,le2019factorization}), and
future work should examine how effective polynomial preconditioners are compared
to other available methods.

Third, many interesting problems (e.g., spectral clustering
\cite{von2008consistency}) involve not the kernel matrix itself but some
rescaled variant. We expect that multiplicative perturbation theory could be
brought to bear here \cite{sosa2016first}. 

Finally, while multivariate polynomials are relatively well-understood objects,
our analysis also shows that in the finite smoothness case, a central role is
played by a different class of objects: namely, multivariate polynomials are
replaced by the eigenvectors of distance matrices of an odd power. To our
knowledge, very little has been proved about such objects but some literature from
statistical physics \cite{bogomolny2003spectral} points to a link to ``Anderson localization''. Anderson
localization is a well-known phenomenon in physics whereby eigenvectors
of certain operators are localised, in the sense of having fast decay over
space. This typically does not hold for orthogonal polynomials, which tend
rather to be localised in frequency. Thus, we conjecture that eigenvectors of
completely smooth kernels are localised in frequency, contrary to eigenvectors
of finitely smooth kernels, which (at low energies) are localised in space. The
results in \cite{bogomolny2003spectral} are enough to show that this holds for
the exponential kernel in $d=1$, but extending this to $d>1$ and higher
regularity orders is a fascinating and probably nontrivial problem.
\appendix

\section{Proofs for elementary symmetric polynomials}\label{sec:proofs_esp}
\begin{proof}[Proof of \Cref{lem:main_terms_esp_separated}]
\begin{enumerate}
\item By definition, the ESPs can be expanded as
\begin{align}
e_s(\varepsilon) & = \sum\limits_{1\le t_1 < \cdots < t_s \le n} \; \prod\limits_{j=1}^s \varepsilon^{L_{t_j}} (\widetilde{\lambda}_{t_j} + \O(\varepsilon)) \nonumber \\
&=
\varepsilon^{L_1 + \cdots + L_s} \Big(\underbrace{\sum\limits_{\substack{1\le t_1 < \cdots < t_s \le n \\ L_1 + \cdots + L_s = L_{t_1} + \cdots + L_{t_s}}} \;\prod\limits_{j=1}^s \widetilde{\lambda}_{t_j}}_{\widetilde{e}_s} + \O(\varepsilon)\Big),\label{eq:esp_orders_from_actual}
\end{align}
which follows from the fact that $L_{t_1} + \cdots + L_{t_s}$ is minimized at $t_j = j$.

\item The case $s=n$ is obvious, because there is only one possible tuple $(t_1,\ldots,t_s)$.
Consider the case $L_s < L_{s+1}$, $1 \le s < n$.
If $t_s > s$, then the sum is increased
\[
L_1 + \cdots + L_s < L_{t_1} + \cdots + L_{t_s},
\]
hence  $(1,\ldots,s)$  is the only possible choice for $(t_1,\ldots,t_s)$  in \eqref{eq:esp_orders_from_actual}.
\end{enumerate}
\end{proof}
\begin{proof}[Proof of \Cref{lem:main_terms_esp_group}]
When \eqref{eq:group_degrees} is satisfied, we need to find
\[
\widetilde{e}_{s+k} =  \sum\limits_{\substack{1\le t_1 < \cdots < t_{s+k} \le n \\ L_1 + \cdots + L_{s+k} = L_{t_1} + \cdots + L_{t_{s+k}}}} \;\prod\limits_{j=1}^{s+k} \widetilde{\lambda}_{t_j}
\]
where, as in the previous case, the minimum sum  $L_{t_1} + \cdots + L_{t_{s+k}}$ is achieved by
\[
L_{1} + \cdots + L_{s} +L_{s+1} + \cdots +  L_{{s+k}},
\]
and the sum increases if $t_{s} > s$ or  if $t_{s+k} > s+m$.
Therefore, $(t_1,\ldots,t_s) = (1,\ldots,s)$ and the main term becomes
\[
 \widetilde{e}_{s+k} =  \prod\limits_{i=1}^{s} \widetilde{\lambda}_{i} \left(\sum\limits_{\substack{s+1\le t_{s+1} < \cdots < t_{s+k} \le s+m}} \;\prod\limits_{j=1}^{k} \widetilde{\lambda}_{t_{s+j}}\right).
\]
\end{proof}
\section{Proofs for saddle point matrices}\label{sec:proofs_saddle}
\begin{proof}[{Proof of \Cref{lem:saddle_point_mat}}]
We  note that $(\det\Rthin)^2 = \det(\matr{V}^{\T} \matr{V})$ and 
\begin{align*}
&\det\begin{bmatrix}
\matr{A} &  \matr{V} \\
 \matr{V}^{\T} & 0 
\end{bmatrix} = 
\det \left(
\begin{bmatrix}\Qfull^{\T} & \\ & \matr{I}_r \end{bmatrix}
\begin{bmatrix}
\matr{A} &  \matr{V} \\
 \matr{V}^{\T} & 0 
\end{bmatrix} \begin{bmatrix}\Qfull & \\ & \matr{I}_r \end{bmatrix} \right) \\
&=
\det
\begin{bmatrix}
\Qthin^{\T}\matr{A} \Qthin^{\T}  &\Qthin^{\T}\matr{A} \Qort  &  \Rthin \\
\Qort^{\T}\matr{A} \Qthin  &\Qort^{\T}\matr{A} \Qort & \\
 \Rthin^{\T} & & 0 
\end{bmatrix} =  (-1)^r (\det\Rthin)^2 \det(\Qort^{\T}\matr{A} \Qort).
\end{align*}
\end{proof}

Before proving  \Cref{lem:esp_coef_perturbation}, we need a technical lemma first.
\begin{lemma}\label{lem:low-rank-update-det}
For $\matr{G} \in \RR^{r\times r}$ and $\matr{M} =       \begin{pmatrix}
        \matr{A} & \matr{B} \\
        \matr{C} & \matr{D} 
      \end{pmatrix} \in \RR^{n \times n}$, $\matr{A} \in \RR^{r \times r}$ it holds that 
\[
  \coefAtMonomial{\det \left( \matr{M} + t
      \begin{pmatrix}
        \matr{G} & 0 \\
        0 & 0
      \end{pmatrix}
    \right)}{t^r} = \det \matr{G} \det \matr{D}
  \]
\end{lemma}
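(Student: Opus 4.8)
The plan is to compute the coefficient of $t^r$ in $\det(\matr{M} + t\,\blkdiag(\matr{G},\matr{0}))$ by viewing the determinant as a polynomial in $t$ and extracting the relevant term via multilinearity of the determinant in the first $r$ columns. Write $\matr{M} = \begin{pmatrix}\matr{A} & \matr{B}\\ \matr{C} & \matr{D}\end{pmatrix}$ with $\matr{A},\matr{G}\in\RR^{r\times r}$, and note that the perturbation $t\,\blkdiag(\matr{G},\matr{0})$ affects only the first $r$ columns of $\matr{M}$. Denote by $\matr{m}_1,\ldots,\matr{m}_n$ the columns of $\matr{M}$ and by $\matr{g}_1,\ldots,\matr{g}_r$ the columns of $\begin{pmatrix}\matr{G}\\\matr{0}\end{pmatrix}\in\RR^{n\times r}$. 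Then the perturbed matrix has columns $\matr{m}_1 + t\matr{g}_1,\ \ldots,\ \matr{m}_r + t\matr{g}_r,\ \matr{m}_{r+1},\ \ldots,\ \matr{m}_n$. Expanding the determinant by multilinearity in the first $r$ columns gives a sum over subsets $S\subseteq\{1,\ldots,r\}$ of terms $t^{|S|}$ times the determinant of the matrix obtained by replacing $\matr{m}_j$ by $\matr{g}_j$ for $j\in S$. The coefficient of $t^r$ corresponds to $S=\{1,\ldots,r\}$, i.e. to replacing all of the first $r$ columns by $\matr{g}_1,\ldots,\matr{g}_r$:
\[
\coefAtMonomial{\det\!\left(\matr{M} + t\,\blkdiag(\matr{G},\matr{0})\right)}{t^r}
= \det\begin{bmatrix}\matr{G} & \matr{B}\\ \matr{0} & \matr{D}\end{bmatrix}.
\]
The right-hand side is a block upper-triangular matrix, so its determinant factors as $\det\matr{G}\,\det\matr{D}$, which is exactly the claimed identity.

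Making the multilinearity step rigorous is the only real point requiring care: one should note that the columns $\matr{g}_j$ have zero entries below row $r$, so that in the term with $S=\{1,\ldots,r\}$ the resulting matrix is precisely $\begin{bmatrix}\matr{G} & \matr{B}\\ \matr{0} & \matr{D}\end{bmatrix}$, and no lower-degree terms in $t$ can contribute to $t^r$ since $|S|<r$ for all other subsets. Alternatively, one can avoid column bookkeeping entirely by using the generalized matrix determinant lemma: writing $\blkdiag(\matr{G},\matr{0}) = \matr{E}_r\,\matr{G}\,\matr{E}_r^{\T}$ where $\matr{E}_r = \begin{bmatrix}\matr{I}_r\\\matr{0}\end{bmatrix}\in\RR^{n\times r}$, one has, whenever $\matr{M}$ is invertible,
\[
\det(\matr{M} + t\,\matr{E}_r\matr{G}\matr{E}_r^{\T}) = \det(\matr{M})\,\det(\matr{I}_r + t\,\matr{G}\matr{E}_r^{\T}\matr{M}^{-1}\matr{E}_r),
\]
and the coefficient of $t^r$ on the right is $\det(\matr{M})\det(\matr{G})\det(\matr{E}_r^{\T}\matr{M}^{-1}\matr{E}_r)$; recognizing $\matr{E}_r^{\T}\matr{M}^{-1}\matr{E}_r$ as the top-left $r\times r$ block of $\matr{M}^{-1}$, whose determinant equals $\det\matr{D}/\det\matr{M}$ by the Schur-complement formula for the inverse, gives $\det\matr{G}\det\matr{D}$; the general (possibly singular) case then follows by continuity/density since both sides are polynomials in the entries of $\matr{M}$.

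I expect no serious obstacle here: the statement is elementary linear algebra, and the main thing to get right is justifying that only the "full" subset $S=\{1,\ldots,r\}$ contributes to the $t^r$ coefficient — which is immediate once multilinearity in columns is invoked. I would present the multilinearity argument as the primary proof since it is self-contained and needs no invertibility hypothesis, relegating the determinant-lemma computation to a remark if space permits.
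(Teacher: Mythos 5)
Your proposal is correct. The paper disposes of this lemma in one line by citing Bhatia--Jain (Theorem 1 of \cite{bhatia2009higher}), which expresses $\frac{1}{r!}\frac{d^r}{dt^r}\det(\matr{M}+t\matr{N})$ as a sum of determinants in which $r$ columns of $\matr{M}$ are replaced by the corresponding columns of $\matr{N}$, and then observes that for $\matr{N}=\blkdiag(\matr{G},\matr{0})$ only the single term replacing the first $r$ columns survives, yielding $\det\begin{pmatrix}\matr{G} & \matr{B}\\ \matr{0} & \matr{D}\end{pmatrix}=\det\matr{G}\det\matr{D}$. Your primary argument is exactly this computation carried out by hand: expanding by multilinearity in the first $r$ columns and noting that only the subset $S=\{1,\dots,r\}$ contributes to the $t^r$ coefficient is precisely the content of the cited derivative formula in this special case, so what you gain is self-containedness at the cost of a few extra lines, while the paper buys brevity with the citation. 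Your alternative route via the generalized matrix determinant lemma, the Jacobi-type identity $\det\big((\matr{M}^{-1})_{1:r,1:r}\big)=\det\matr{D}/\det\matr{M}$, and a density argument is also sound, but it is genuinely heavier (invertibility hypotheses removed only a posteriori by continuity), so your choice to make the multilinearity expansion the main proof and relegate the other to a remark is the right call.
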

\begin{proof}
From  \cite[Theorem 1]{bhatia2009higher}, we have that\footnote{Note that in the sum in \cite[eq. (7)]{bhatia2009higher} only one term is nonzero.}
\[
\frac{1}{r!} \frac{d^r}{dt^r}  \det \left( \matr{M} + t
      \begin{pmatrix}
        \matr{G} & 0 \\
        0 & 0
      \end{pmatrix}
    \right) = \det  \begin{pmatrix}
        \matr{G} & \matr{B} \\
       0 & \matr{D} 
      \end{pmatrix}  = \det \matr{G} \det \matr{D}.
\]
\end{proof}
\begin{proof}[{Proof of \Cref{lem:esp_coef_perturbation}}]
Due to invariance under similarity transformations of the elementary polynomials, we get
\begin{align*}
&\coefAtMonomial{e_k(\matr{A} + t \matr{V}\matr{V}^{\T})}{t^{r}} = 
\coefAtMonomial{e_k(\Qfull^{\T}(\matr{A} + t \matr{V}\matr{V}^{\T})\Qfull)}{t^{r}} \\
& = 
\coefAtMonomial{e_k\Big(\underbrace{\begin{bmatrix}\Qthin^{\T}\matr{A}\Qthin + t  \Rthin\Rthin^{\T} &\Qthin^{\T}\matr{A}\Qort \\ \Qort^{\T}\matr{A}\Qthin& \Qort^{\T}\matr{A}\Qort \end{bmatrix}}_{\matr{B}(t)}\Big)}{t^{r}} \\
& = \sum_{\substack{|\calJ| = k\\\calJ \subseteq \{1,\ldots, n\}}} \coefAtMonomial{\det \left(\matr{B}_{\calJ,\calJ}(t)\right)}{t^{r}} = \sum_{\substack{|\calJ| = k\\\{1,\ldots, r\} \subseteq \calJ \subseteq \{1,\ldots, n\}}} \coefAtMonomial{\det \left(\matr{B}_{\calJ,\calJ}(t)\right)}{t^{r}},
\end{align*}
where the last equality follows from the fact that the polynomial  $\det \left(\matr{B}_{\calJ,\calJ}(t)\right)$ has degree that is equal to the cardinality of the intersection $\calJ \cap \{1,\ldots,r\}$.
Any such $\calJ$ can be written as $\{1,\ldots,r\} \cup\calJ'$, where  $\calJ' \subseteq \{r+1,\ldots, n\}$.
Applying \Cref{lem:low-rank-update-det} to each term individually, we get 
\[
 \coefAtMonomial{\det  \left(\matr{B}_{\calJ,\calJ}(t)\right)}{t^r}
  = \det(\Rthin\Rthin^{\T})  \det \left((\Qort^{\T}\matr{A}\Qort )_{\calJ',\calJ'}\right), \\
\]
thus, summing over all $\calJ' \subseteq \{r+1,\ldots, n\}$ yields
\[
\det(\Rthin\Rthin^{\T})  \sum_{\substack{\calJ' \subseteq \{r+1,\ldots, n\}\\|\calJ'| = k-r}} \det \left((\Qort^{\T}\matr{A}\Qort )_{\calJ',\calJ'}\right)  = \det(\Rthin\Rthin^{\T}) e_{k-r}(\Qort^{\T}\matr{A}\Qort ).
\]
\end{proof}

\section*{Acknowledgments}
The authors would like to acknowledge Pierre Comon for his help at the beginning
of this project, and Cosme Louart and Malik Tiomoko for their help in improving
the presentation of the results. We would also like to thank two anonymous
referees for their careful reading of our work and many suggestions.

\bibliographystyle{siamplain}
\bibliography{flat_limit}

\end{document}